\begin{document}

\title{A central limit theorem for the zeroes of the zeta function}
\author{Brad Rodgers}
\date{}
\thanks{Research supported in part by an NSF RTG Grant.}
\address{Department of Mathematics, UCLA, Los Angeles CA 90095-1555, USA}
\email{brodgers@math.ucla.edu}
\subjclass[2010]{11M06, 11M50, 15B52, 60G55}

\maketitle

\newenvironment{nmath}{\begin{center}\begin{math}}{\end{math}\end{center}}

\newtheorem{thm}{Theorem}[section]
\newtheorem{lem}[thm]{Lemma}
\newtheorem{prop}[thm]{Proposition}
\newtheorem{cor}[thm]{Corollary}
\newtheorem{conj}[thm]{Conjecture}
\newtheorem{dfn}[thm]{Definition}

\newcommand\MOD{\textrm{ (mod }}
\newcommand\s{\textrm{ }}
\newcommand\Var{\mathrm{Var}}
\newcommand\E{\mathbb{E}}
\newcommand\Tr{\mathrm{Tr}}

\begin{abstract}
On the assumption of the Riemann hypothesis, we generalize a central limit theorem of Fujii regarding the number of zeroes of Riemann's zeta function that lie in a mesoscopic interval. The result mirrors results of Spohn and Soshnikov and others in random matrix theory. In an appendix we put forward some general theorems regarding our knowledge of the zeta zeroes in the mesoscopic regime.
\end{abstract}

\section{Introduction}
This paper is an account of a mesoscopic central limit theorem for the number of zeroes of the Riemann zeta function as counted by a (possibly) smoothed counting function. We define the term `mesoscopic' below.  We assume the Riemann hypothesis (RH) throughout the note. On RH, the zeroes of the Riemann zeta function may be labeled $\tfrac{1}{2}+i\gamma$, where $\gamma$ is real. As is customary, we sometimes refer the $\gamma$'s themselves as zeroes, at least where there is no confusion caused. Our concern is the statistical distribution of $\gamma$ near some large (random) height $T$.

\textit{Notation:} We will follow the conventions that $e(x) = e^{i2\pi x}$, the Fourier transform of a function is $\hat{f}(\xi) = \int e(-x \cdot \xi) f(x)\, dx$, and the inverse Fourier transform is $\check{g}(x) = \int e(x\cdot \xi)g(\xi) \,d\xi$. In addition we use the notations $|f(x)| \lesssim g(x)$ and $f(x) = O(g(x))$ interchangeably to mean there is a constant $C$ not depending on $x$ so that $|f(x)| \leq C g(x)$. Finally, in cases where the context is clear, we sometimes use the abbreviation $K_L(x) = K(x/L)$.

If $N(T)$ is the number of nontrivial zeroes in the upper half plane with height no more than $T$, then the number of zeroes $N(t+h)-N(t)$ to occur in an interval $[t,t+h]$ is expected to be roughly $h \tfrac{\log t}{2\pi}$ \cite{Ti}. It was first shown by Fujii \cite{Fu} that the oscillation of this quantity is Gaussian, with a variance depending upon the number of zeroes expected to lie in the interval.

\begin{thm}[Fujii's mesoscopic central limit theorem]
\label{Fujiimeso}
Let $n(T)$ be a fixed function tending to infinity as $T\rightarrow\infty$ in such a way that $n(T) = o(\log T)$, and let $t$ be a random variable uniformly distributed on the interval $[T,2T]$. For notational reasons we label by $X_T$ the probability space from which the random variable $t$ is drawn. Then, letting $\Delta = \Delta(t,T) := N(t + \tfrac{2\pi n(T)}{\log T}) - N(t)$,
$$
\E_{X_T} \Delta = n(T) + o(1),
$$
$$
\Var_{X_T}(\Delta) \sim \frac{1}{\pi^2}\log n(T),
$$
and in distribution
$$
\frac{\Delta-\E \Delta}{\sqrt{\Var \Delta}} \Rightarrow N(0,1)
$$
as $T\rightarrow\infty$.
\end{thm}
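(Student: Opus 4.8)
The plan is to pass from the sharp counting function to $S(t+h)-S(t)$, to approximate the latter by a Dirichlet polynomial over primes via Selberg's formula, and to obtain the Gaussian limit by the method of moments, in the spirit of Selberg's central limit theorem for $S(t)$. Write $h=\tfrac{2\pi n(T)}{\log T}$. By the Riemann--von Mangoldt formula $N(u)=1+\tfrac1\pi\theta(u)+S(u)$, with $\theta$ the Riemann--Siegel theta function and $S(u)=\tfrac1\pi\arg\zeta(\tfrac12+iu)$, one has the exact identity $\Delta=\tfrac1\pi(\theta(t+h)-\theta(t))+(S(t+h)-S(t))$. Since $\theta'(u)=\tfrac12\log\tfrac{u}{2\pi}+O(u^{-2})$, the first term equals $\tfrac{h}{2\pi}\log\tfrac{t}{2\pi}+O(h^2/t)=n(T)+o(1)$ uniformly for $t\in[T,2T]$, using $n(T)=o(\log T)$. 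Hence $\Delta=n(T)+o(1)+\Delta S$ with $\Delta S:=S(t+h)-S(t)$, and everything reduces to the three assertions for $\Delta S$; the mean statement $\E_{X_T}\Delta S=O(T^{-1}\log T)$ is immediate from Littlewood's bound $\int_0^V S=O(\log V)$.

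The next step is Selberg's formula: on RH, for a truncation $x=x(T)$ which I would take to be a power $T^{a}$ (or $T^{1/g(T)}$ with $g(T)\to\infty$ slowly enough that $\log g(T)=o(\log n(T))$, so as to keep every moment clean), and with $\Lambda_x$ the usual truncated von Mangoldt weight,
$$
S(u)=-\tfrac1\pi\,\mathrm{Im}\sum_{n\le x}\frac{\Lambda_x(n)}{\sqrt n\,\log n}\,n^{-iu}+E(u),
$$
where $E$, after its mean over $[T,2T]$ is removed, contributes negligibly to every $L^{2k}$ norm. Differencing at $u=t$ and $u=t+h$ gives $\Delta S(t)=P(t)+(E(t+h)-E(t))$ with $P(t)=-\tfrac1\pi\,\mathrm{Im}\sum_{n\le x}\tfrac{\Lambda_x(n)}{\sqrt n\,\log n}(n^{-ih}-1)n^{-it}$. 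Establishing this approximation with enough uniformity to survive all the moment computations -- the step that genuinely uses RH and the hypothesis $n(T)\to\infty$, and where the sharpness of the counting function (versus a smoothed variant, for which one could instead feed a test function with compactly supported Fourier transform into the Guinand--Weil explicit formula) makes things delicate -- is what I expect to be the main obstacle.

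Granting this, the variance falls out of the mean value theorem for Dirichlet polynomials: the off-diagonal terms are $o(\log n)$ (since $x^{k}=o(T)$ for each fixed $k$), and the diagonal gives
$$
\Var_{X_T}(P)\sim\frac{1}{2\pi^2}\sum_{n\le x}\frac{\Lambda_x(n)^2}{n\log^2 n}\,|n^{-ih}-1|^2\sim\frac{1}{\pi^2}\sum_{p\le x}\frac{1-\cos(h\log p)}{p}.
$$
Splitting at $p=e^{1/h}$: below this point $1-\cos(h\log p)\asymp(h\log p)^2$ together with $\sum_{p\le y}\tfrac{\log^2 p}{p}\sim\tfrac12\log^2 y$ gives a contribution of $O(1)$, while above it, partial summation and the boundedness of the cosine integral on $[1,\infty)$ remove the cosine, leaving $\sum_{e^{1/h}\le p\le x}\tfrac1p+O(1)=\log(h\log x)+O(1)\sim\log n(T)$ by Mertens' theorem. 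Thus $\Var_{X_T}(\Delta S)\sim\tfrac1{\pi^2}\log n(T)$.

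For the central limit theorem I would run the method of moments on $P$. Writing $P(t)=\sum_{n\le x}(c_n n^{-it}+\overline{c_n}\,n^{it})$, with $c_p=\tfrac{i}{2\pi}(p^{-ih}-1)p^{-1/2}$ on primes and smaller coefficients on prime powers, one expands $P(t)^k$ and integrates over $t\in[T,2T]$; the orthogonality relation $\tfrac1T\int_T^{2T}(m/\ell)^{it}\,dt=\delta_{m,\ell}+O(\tfrac1{T|\log(m/\ell)|})$ shows that the main contribution comes from terms in which the primes pair off exactly, producing $(2j)!/(2^jj!)$ copies of $(\Var P)^{j}$ when $k=2j$ and $0$ when $k$ is odd, up to an error $o((\log n)^{k/2})$. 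That the contributions of prime powers, of multiplicative relations among three or more primes, and of the off-diagonal remainder are all of lower order follows because the coefficients are uniformly small -- $|c_p|\le(2\pi)^{-1}\min(2,h\log p)\,p^{-1/2}$, so $\max_p|c_p|\to0$ -- while $\sum_p|c_p|^2\asymp\log n(T)\to\infty$, which is precisely the regime in which the moment method produces a Gaussian. Combined with the reductions above, this yields $(\Delta-\E\Delta)/\sqrt{\Var\Delta}\Rightarrow N(0,1)$.
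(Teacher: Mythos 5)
Your proposal is correct in outline, but it is a genuinely different route from the one taken in this paper: it is essentially Fujii's original argument. You reduce to $S(t+h)-S(t)$, invoke Selberg's $L^{2k}$ approximation of $S$ by a short Dirichlet polynomial over primes, and run the method of moments on that polynomial; the diagonal/Mertens computation of the variance (splitting at $p=e^{1/h}$) and the pairing combinatorics are exactly as in Selberg--Fujii, and the step you flag as the main obstacle -- establishing the mollification with enough uniformity -- is precisely what Selberg's formula \eqref{selberg} supplies, even unconditionally. The paper instead obtains Theorem \ref{Fujiimeso} as the special case $\eta=\mathbf{1}_{[-1/2,1/2]}$ of Theorem \ref{generalmeso}, which is proved without Selberg's mollification: one feeds band-limited test functions into the Guinand--Weil explicit formula (Theorem \ref{explicitform}), computes moments of the resulting prime sums against a smooth average in $t$ (Lemmas \ref{X7} and \ref{eval}), and then approximates the dilated indicator by a function whose Fourier transform is supported at scale $\log T/n(T)$, controlling the error with the upper bound of Theorem \ref{10}. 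The trade-off is this: your route is shorter for the indicator function and, as the paper notes, is unconditional; but it does not extend beyond finite sums of indicators, because approximating a general function of bounded variation by simple functions produces errors in the moments that overwhelm the main terms of size $(\log n(T))^{k/2}$. The explicit-formula route buys the general statements of Theorems \ref{generalmeso} and \ref{generalmesoconv} and makes transparent the link to the microscopic sine-kernel statistics of Rudnick--Sarnak. One small point to tend to in your write-up: the truncation length in Selberg's approximation depends on the moment order $k$ (it is $T^{1/k}$ in \eqref{selberg}), so you should either fix the truncation per moment or, as you suggest, take $x=T^{1/g(T)}$ and check that changing the truncation alters the variance only by $O(1)$, which is harmless since $\log n(T)\to\infty$.
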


The main purpose of this note is to generalize Fujii's theorem in the following way:

\begin{thm}[A general mesoscopic central limit theorem]
\label{generalmeso}
Let $n(T)$ and $X_T$ be as in Theorem \ref{Fujiimeso}. For a fixed real valued function $\eta$ with compact support and bounded variation, define
$$
\Delta_\eta = \Delta_\eta(t,T) = \sum_\gamma \eta\big(\tfrac{\log T}{2\pi n(T)} (\gamma-t)\big),
$$
where the sum is over all zeros $\gamma$, counted with multiplicity. In the case that $\int |x||\hat{\eta}(x)|^2 \, dx$ diverges, we have
$$
\E_{X_T} \Delta_\eta = n(T)\int_\mathbb{R}\eta(\xi) d\xi + o(1),
$$
$$
\Var_{X_T}(\Delta_\eta) \sim \int_{-n(T)}^{n(T)} |x| |\hat{\eta}(x)|^2 dx
$$
and in distribution
$$
\frac{\Delta_\eta -\E \Delta_\eta}{\sqrt{\Var \Delta_\eta}} \Rightarrow N(0,1)
$$
as $T\rightarrow\infty$.
\end{thm}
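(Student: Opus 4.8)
The plan is to reduce $\Delta_\eta$ to a short Dirichlet polynomial via the explicit formula and then apply the method of moments. Throughout set $L = L(T) := \frac{\log T}{2\pi n(T)}$ (so $L\to\infty$ while $\log L = o(\log T)$), write $\hat\eta_n := \hat\eta\big(\tfrac{\log n}{2\pi L}\big)$ and $b_n := -\tfrac{\Lambda(n)}{2\pi L\sqrt n}\hat\eta_n$, and for a parameter $y = y(T) = T^{\theta}$, $\theta > 0$ small, put
$$
P(t) := \sum_{2\le n\le y}\big(b_n\,n^{-it} + \overline{b_n}\,n^{it}\big).
$$
Since $\eta$ is real, of bounded variation, and compactly supported, $\hat\eta(\xi)$ and $\widehat{\xi\eta}(\xi)$ are $O((1+|\xi|)^{-1})$, which controls every tail below, and $|\hat\eta|$ is even. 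The first step invokes RH — this is where the hypothesis is used, and is essentially the content of the appendix: the archimedean part of the explicit formula equals $\tfrac{n(T)}{\log T}\,\E_{X_T}\!\big[\log\tfrac{t}{2\pi}\big]\!\int\!\eta + o(1) = n(T)\!\int\!\eta + o(1)$ (using $n(T)=o(\log T)$), with fluctuation $O(n(T)/\log T)=o(1)$, while the prime part, after truncation at $y$ (the tail $y<n\le T$ being negligible on RH), is $P(t)$. Hence $\E_{X_T}\Delta_\eta = n(T)\!\int\!\eta + o(1)$, and $\Delta_\eta-\E_{X_T}\Delta_\eta = P(t)+o(1)$, the latter in every $L^p(X_T)$.

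For the variance, from $\E_{X_T}\big[(m/n)^{it}\big]=\delta_{m,n}+O\!\big(\min(1,(T|\log(m/n)|)^{-1})\big)$ the quantity $\E_{X_T}|P(t)|^2$ reduces to its diagonal with negligible error (here $\theta<1$ is more than enough), the prime powers $n=p^k$ with $k\ge 2$ contribute $o(1)$, and by partial summation from Mertens' theorem (with RH making the remainder $O(1)$),
$$
2\!\sum_{n\le y}|b_n|^2 = \frac{1}{2\pi^2 L^2}\sum_{p\le y}\frac{\log^2 p}{p}\,|\hat\eta_p|^2 + o(1)
= 2\!\int_0^{\frac{\log y}{2\pi L}}\!\! x\,|\hat\eta(x)|^2\,dx + O(1).
$$
Now $\tfrac{\log y}{2\pi L} = \theta\, n(T)$ and $x|\hat\eta(x)|^2 = O(1/|x|)$, so $\int_{\theta n(T)}^{n(T)}x|\hat\eta(x)|^2\,dx = O(1)$; since $\int|x||\hat\eta(x)|^2\,dx$ diverges we have $\int_0^{n(T)}x|\hat\eta(x)|^2\,dx\to\infty$, so the upper limit $\theta n(T)$ may be replaced by $n(T)$. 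Thus $V:=\Var_{X_T}(\Delta_\eta)\sim 2\sum_{n\le y}|b_n|^2 \sim \int_{-n(T)}^{n(T)}|x||\hat\eta(x)|^2\,dx\to\infty$; it is exactly this divergence that makes the choice of $\theta$ immaterial and that powers the central limit theorem.

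For the central limit theorem, fix $k$ and choose $\theta=\theta(k)>0$ with $y^k\le T^{1/2}$. I would show $\E_{X_T}\big[(P(t)/\sqrt V)^k\big]\to\E[Z^k]$ for $Z\sim N(0,1)$ — that is, $(k-1)!!$ for even $k$ and $0$ for odd $k$ — which, combined with $\Delta_\eta-\E\Delta_\eta=P(t)+o(1)$ in $L^k(X_T)$ and $\Var\Delta_\eta\sim V$, gives the theorem. Expanding $\E_{X_T}[P(t)^k]$ produces a sum over $k$-tuples of prime powers in $[2,y]$ with one sign per slot; the diagonal tuples, where the signed product of the $n_j$ is $1$, supply the main term, and a Montgomery--Vaughan type mean value bound (licit because all occurring products of the $n_j$ are $\le T^{1/2}$) shows the off-diagonal remainder is $o(V^{k/2})$. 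By unique factorization the dominant diagonal tuples are the perfect matchings of the $k$ slots into cancelling $(+,-)$ pairs carrying equal primes; for even $k$ each of the $(k-1)!!$ matchings contributes $\big(2\sum_{p\le y}|b_p|^2\big)^{k/2}(1+o(1)) = V^{k/2}(1+o(1))$, and for odd $k$ there are none. The remaining diagonal tuples are $o(V^{k/2})$: those with a prime repeated beyond a single pair are bounded by $\sum_p|b_p|^4\le(\max_p|b_p|^2)\sum_p|b_p|^2 = o(V)$, since $\max_p|b_p|^2\ll L^{-2}\to 0$; and those linking three or more slots through one prime (possible only via higher prime powers) are bounded by convergent single-prime sums and cost at least one power of $V$. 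Equivalently one checks that every cumulant of $P/\sqrt V$ of order $\ge 3$ tends to $0$. Since the standard normal is determined by its moments, $P(t)/\sqrt V\Rightarrow N(0,1)$.

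The main obstacle is the first step, which is where RH is genuinely needed: writing $\Delta_\eta-\E_{X_T}\Delta_\eta$ as the short prime sum $P(t)$ with error $o(1)$ in every $L^p(X_T)$ — hence $o_p(\sqrt V)$ no matter how slowly $V$ grows — rests on sharp bounds for $S(t)$ and for $\psi(x)-x$, and is precisely the kind of mesoscopic estimate the appendix is built to furnish. The remaining ingredients are standard: the archimedean main term and its $o(1)$ fluctuation, the Mertens partial summation for the variance, and the Montgomery--Vaughan moment bookkeeping — the last somewhat tedious in the coincidence terms, and requiring the truncation $y$ to be a suitably small power of $T$ (depending on the moment), which is harmless precisely because the variance integral diverges.
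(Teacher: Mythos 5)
The combinatorial core of your argument (diagonal extraction, perfect matchings giving $(k-1)!!$, Mertens/PNT partial summation for the variance, the observation that divergence of $\int|x||\hat\eta|^2$ makes the truncation level $\theta n(T)$ immaterial) is sound and runs parallel to Lemmas \ref{X7} and \ref{eval} of the paper. But there is a genuine gap at the very first step, and it is the step on which the whole theorem turns: the claim that $\Delta_\eta-\E_{X_T}\Delta_\eta = P(t)+o(1)$ in every $L^p(X_T)$, with $P$ a Dirichlet polynomial truncated at $y=T^{\theta}$. For $\eta$ merely of bounded variation with compact support, $\hat\eta(\xi)$ decays only like $1/(1+|\xi|)$, so the prime-side test function $x\mapsto \hat\eta\big(\tfrac{x}{2\pi L}\big)e^{-x/2}$ fails the exponential-decay hypotheses (a), (b) of the explicit formula (Theorem \ref{explicitform}); the untruncated prime sum $\sum_n \Lambda(n)n^{-1/2}\,\hat\eta\big(\tfrac{\log n}{2\pi L}\big)$ is not even absolutely convergent, and the tail $n>y$ is not ``negligible on RH'' by any off-the-shelf bound. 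Even in the sharp-cutoff case $\eta=\mathbf{1}_{[-1/2,1/2]}$ this reduction is Selberg's theorem \eqref{selberg}, which gives only $O(1)$ (not $o(1)$) error in $L^{2k}$ and whose proof is a substantial mollification argument; and the paper explicitly notes that attempting to pass from indicator functions to general BV $\eta$ by approximation makes the accumulated errors overwhelm the main term. Your appeal to ``sharp bounds for $S(t)$ and $\psi(x)-x$'' and to the appendix does not fill this: the appendix furnishes statistics only for band-limited test functions.

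The paper's route is designed precisely to avoid ever asserting such an $L^p$ approximation for $\eta$ itself. It first proves the moment asymptotics for the band-limited regularization $\check K_{n(T)}\ast\eta$ (whose Fourier transform has compact support at scale $o(\log T)$, so the explicit formula applies and the prime sum is genuinely finite --- Lemmas \ref{X7}--\ref{9}), and then controls the discrepancy between the linear statistics of $\eta$ and of $\check K_{n(T)}\ast\eta$ by a separate device: the upper bound of Theorem \ref{10} on moments of zero-counts against arbitrary test functions, combined with $\|\eta-\check K_H\ast\eta\|_{L^1}\lesssim \mathrm{var}(\eta)/H$ (Lemmas \ref{trunc}--\ref{maximal}) and the algebraic inequality of Lemma \ref{X11}. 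If you want to salvage your plan, you must either prove a Selberg-type $L^{2k}$ approximation of $\Delta_\eta-\E\Delta_\eta$ by a short Dirichlet polynomial uniformly over BV test functions --- which is not in the literature and is at least as hard as the theorem --- or insert the band-limiting-plus-density-bound mechanism, at which point you have reproduced the paper's proof.
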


It is a straightforward computation to see that Theorem \ref{Fujiimeso} follows from Theorem \ref{generalmeso} by letting $\eta = \mathbf{1}_{[-1/2,1/2]}.$

Additionally, in the case of variances that converge:

\begin{thm}
\label{generalmesoconv}
For $n(T)$ and $X_T$ as in Theorem \ref{generalmeso}, but $\eta$ with compact support and bounded second derivative, the integral $\int |x||\hat{\eta}(x)|^2 \, dx$ is necessarily finite, but the conclusion of Theorem \ref{generalmeso} remains true even still.
\end{thm}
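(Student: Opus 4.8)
The plan is to follow the argument already carried out for Theorem \ref{generalmeso}, checking at each stage that the conclusions survive when the limiting variance $\sigma^2 := \int_{\mathbb{R}} |x|\,|\hat\eta(x)|^2\,dx$ is finite rather than infinite. For the finiteness itself: if $\eta$ has compact support and bounded second derivative then $\eta'' \in L^1$, so $\widehat{\eta''}(\xi) = -4\pi^2 \xi^2 \hat\eta(\xi)$ is bounded, and combined with the trivial bound $|\hat\eta(\xi)| \le \|\eta\|_1$ this gives $|\hat\eta(\xi)| \lesssim (1+\xi^2)^{-1}$; hence $|x|\,|\hat\eta(x)|^2 \lesssim |x|(1+x^2)^{-2}$, which is integrable. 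This decay of $\hat\eta$ — stronger than the weighted-$L^2$ control available under the weaker hypotheses of Theorem \ref{generalmeso} — is exactly what will let us upgrade the relevant error estimates below from bounds of smaller order than the variance to bounds that are $o(1)$.

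The expectation and variance asymptotics are essentially inherited. The formula $\E_{X_T}\Delta_\eta = n(T)\int\eta + o(1)$ rests only on the archimedean main term of the Riemann--Weil explicit formula applied to $u \mapsto \eta\big(\tfrac{\log T}{2\pi n(T)}u\big)$ and is indifferent to the convergence question. For the variance one writes, via that same formula, $\Delta_\eta - \E_{X_T}\Delta_\eta$ as (essentially) the real Dirichlet polynomial $-\tfrac{2n(T)}{\log T}\sum_{n \le T^A}\tfrac{\Lambda(n)}{\sqrt n}\,\mathrm{Re}\big(n^{-it}\hat\eta(\tfrac{n(T)\log n}{\log T})\big)$ plus a remainder small in $L^2(X_T)$, uses $\tfrac1T\int_T^{2T}(m/n)^{it}\,dt = \delta_{m=n} + O(\tfrac1{T|\log(m/n)|})$ to isolate the diagonal, and evaluates the resulting prime sum $\tfrac{2n(T)^2}{\log^2 T}\sum_n \tfrac{\Lambda(n)^2}{n}|\hat\eta(\tfrac{n(T)\log n}{\log T})|^2$ by the prime number theorem as $\int_{-n(T)}^{n(T)}|x|\,|\hat\eta(x)|^2\,dx + o(1)$, which in the present regime tends to $\sigma^2$. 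The one genuinely new point is that the error terms which Theorem \ref{generalmeso} could afford to bound only by a quantity of smaller order than its (diverging) variance must here be shown to be truly $o(1)$; the bound $|\hat\eta(\xi)| \lesssim (1+\xi^2)^{-1}$ makes the relevant tail sums converge and disposes in particular of the prime-power and off-diagonal contributions.

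The central limit theorem itself is the crux. Writing $\sigma_T^2 := \Var_{X_T}\Delta_\eta \to \sigma^2$, and assuming $\eta \not\equiv 0$ so that $\sigma^2 > 0$ (otherwise the normalized statement is vacuous), it suffices to show $\Delta_\eta - \E_{X_T}\Delta_\eta \Rightarrow N(0,\sigma^2)$, which I would do by the method of moments: since the normal law is determined by its moments, it is enough to prove, for each fixed $m$, that $\E_{X_T}\big[(\Delta_\eta - \E_{X_T}\Delta_\eta)^m\big] \to (m-1)!!\,\sigma^m$ for $m$ even and $\to 0$ for $m$ odd. Substituting the Dirichlet polynomial and averaging over $t$, one expands the $m$-fold sum according to which of the relations $\prod_{j=1}^m n_j^{\epsilon_j} = 1$ ($\epsilon \in \{\pm1\}^m$) is satisfied: the \emph{pairing} configurations — indices matched in pairs with $n_i = n_j$, $\epsilon_i = -\epsilon_j$ — contribute $(m-1)!!\,\sigma_T^m + o(1)$, while every other configuration forces a coincidence among prime powers and hence either fewer free summation variables or an additional factor $p^{-1/2}$, each such defect costing a net power of $n(T)/\log T \to 0$; so all non-pairing terms are $o(1)$, the tails again being summable thanks to the decay of $\hat\eta$. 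Morally this is just the assertion that, by the asymptotic independence of the family $\{n^{it}\}_n$ under $X_T$, the approximating polynomial is a sum of mean-zero pieces each of size $\lesssim 1/\log T \to 0$ with total variance $\to \sigma^2$, to which a Lindeberg-type triangular-array central limit theorem applies — but the moment bookkeeping is the cleanest way to make this rigorous. That bookkeeping is also the main obstacle: in Theorem \ref{generalmeso} one is free to lose powers of the diverging variance when estimating off-diagonal and explicit-formula remainder terms, whereas here each such term must be shown to be genuinely $o(1)$, which forces one to account exactly for the number of free prime summations in every configuration and to use the $(1+\xi^2)^{-1}$ decay of $\hat\eta$ throughout.
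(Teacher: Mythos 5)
Your high-level skeleton (explicit formula $\to$ Dirichlet polynomial in the primes $\to$ moment method with Wick pairings) is the same as the paper's, and your finiteness argument and your instinct that every error term must now be genuinely $o(1)$ rather than merely of smaller order than a diverging variance are both correct. But there is a genuine gap at the step you pass over in one clause: writing $\Delta_\eta-\E\Delta_\eta$ as a Dirichlet polynomial ``plus a remainder small in $L^2(X_T)$'' (and, for the higher moments, in $L^{2k}$). The explicit formula (Theorem \ref{explicitform}) cannot be applied directly to $\eta\big(\tfrac{\log T}{2\pi n(T)}(\cdot-t)\big)$: the function $g$ appearing on the prime side is essentially $\hat{\eta}(n(T)x/\log T)$ up to modulation, and for compactly supported $C^2$ functions this decays only quadratically, nowhere near the $e^{(1/2+\delta)|x|}$-weighted integrability the formula requires; correspondingly the full prime sum $\sum_n \Lambda(n)n^{-1/2}|\hat{\eta}(n(T)\log n/\log T)|$ does not even converge absolutely. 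An approximate explicit formula with a truncated prime sum (Selberg-style) produces a remainder whose $L^{2k}$ norm one can only bound by $O(1)$ per moment -- which is exactly what is fatal here, since $O(1)$ is the same size as the limiting variance. So the ``$o(1)$ remainder'' is not a bookkeeping detail; it is the entire content of Theorem \ref{generalmesoconv} beyond Theorem \ref{generalmeso}, and the decay of $\hat{\eta}$ does not dispose of it merely by making tail sums converge.

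The paper closes this gap by a different mechanism: it first replaces $\eta$ by the band-limited mollification $\check{K}_{n(T)}\ast\eta$, for which the explicit formula applies exactly and the prime sum is automatically truncated at $T^{\Delta}$ with $\Delta<2$ (Lemma \ref{X7}); it then controls the moments of the difference $\int(\eta-\check{K}_{n(T)}\ast\eta)\,dS$ not through the explicit formula at all but through an a priori upper bound on moments of linear statistics of the zeros against arbitrary test functions (Theorem \ref{10}, a maximal-function-type bound), combined with the quantitative approximation estimate $\|\eta-\check{K}_H\ast\eta\|_{L^1}=o(1/H)$. It is precisely this last $o(1/H)$ (versus the $O(1/H)$ available for general bounded-variation $\eta$) that consumes the $C^2$ hypothesis and turns the error from $O(1)$ into $o(1)$. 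Your proposal needs some substitute for this entire apparatus; without it, the non-band-limited-to-band-limited reduction, and hence the theorem, is unproved.
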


\textit{Remark:} The condition that the test functions in Theorems \ref{generalmeso} and \ref{generalmesoconv} have compact support is not optimised; somewhat looser decay properties, along the lines of quadratic decay, are sufficient for the proof that follows.

We call Theorems \ref{Fujiimeso}, \ref{generalmeso}, and \ref{generalmesoconv} `mesoscopic' central limit theorems as they concern collections of $n(T)$ zeroes which grow to infinity, but intervals whose length $\tfrac{2\pi n(T)}{\log T}$ tends to $0$ all the same.

On such mesoscopic intervals (averaged as in Theorems \ref{Fujiimeso} and \ref{generalmeso}), all evidence points to the zeroes resembling points in a determinantal point process with sine kernel. In the microscopic regime (when $n(T) = O(1)$) this is known to be the case, provided we restrict our attention to the statistics counted by sufficiently smooth test functions. (See Rudnick and Sarnak \cite{RuSa} or Hughes and Rudnick \cite{HuRu}.) The techniques that follow allow us to recover these results for smooth test functions, as well as extend them to a mesoscopic regime, in a sense to be specified. These matters are discussed in the appendix.

For the moment, we may simply note the similarity of Theorems \ref{Fujiimeso}, \ref{generalmeso}, and \ref{generalmesoconv} to certain results in the theories of random matrices and determinantal point processes (for an introduction to the latter, see \cite{HoKrPeVi} or the introduction of \cite{So1}):

\begin{thm}[Costin and Lebowitz]
Let $X$ be a determinantal point process on $\mathbb{R}$ with sine kernel $K(x,y) = \tfrac{\sin \pi(x-y)}{\pi(x-y)}$, and $\Delta$ a count of the number of points lying in the interval $[0,L]$. Then
$$
\E_X \Delta = L,
$$
$$
\Var_X(\Delta) \sim \frac{1}{\pi^2}\log L
$$
and in distribution
$$
\frac{\Delta-\E \Delta}{\sqrt{\Var \Delta}} \Rightarrow N(0,1)
$$
as $L \rightarrow \infty$.
\end{thm}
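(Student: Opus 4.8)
The plan is to combine the operator formalism for determinantal point processes with the method of cumulants. First I would set up the relevant operator: let $K_L$ be the integral operator on $L^2[0,L]$ with kernel $K(x,y) = \tfrac{\sin\pi(x-y)}{\pi(x-y)}$. On all of $\mathbb{R}$ the sine kernel is the orthogonal projection onto functions whose Fourier transform is supported in $[-\tfrac12,\tfrac12]$, so $0 \le K_L \le 1$ as operators; and since $\Tr K_L = \int_0^L K(x,x)\,dx = L < \infty$, the operator $K_L$ is trace class with eigenvalues $\lambda_1,\lambda_2,\dots\in[0,1]$ satisfying $\sum_i\lambda_i = L$. The correlation functions $\rho_1(x) = K(x,x)$ and $\rho_2(x,y) = K(x,x)K(y,y) - |K(x,y)|^2$ of the process then give immediately
$$
\E_X\Delta = \Tr K_L = L, \qquad \Var_X(\Delta) = \Tr K_L - \Tr K_L^2 = \sum_i \lambda_i(1-\lambda_i).
$$

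Next I would pin down the variance. Writing $\Tr K_L^2 = \iint_{[0,L]^2} K(x,y)^2\,dx\,dy$, substituting $u = x-y$, and using that the pairs $(x,y)\in[0,L]^2$ with $x-y = u$ contribute length $(L-|u|)_+$, one gets $\Tr K_L^2 = \int_{-L}^{L}(L-|u|)\tfrac{\sin^2\pi u}{\pi^2 u^2}\,du$. Since $\int_{\mathbb R}\tfrac{\sin^2\pi u}{\pi^2 u^2}\,du = 1$ (Plancherel applied to $\mathbf{1}_{[-1/2,1/2]}$) and $\int_{|u|>L}\tfrac{\sin^2\pi u}{\pi^2 u^2}\,du = O(1/L)$, this yields
$$
\Var_X(\Delta) = L - \int_{-L}^{L}(L-|u|)\tfrac{\sin^2\pi u}{\pi^2 u^2}\,du = \int_{-L}^{L}\tfrac{\sin^2\pi u}{\pi^2|u|}\,du + O(1) = \tfrac{2}{\pi^2}\int_{1}^{L}\tfrac{\sin^2\pi u}{u}\,du + O(1),
$$
and replacing $\sin^2\pi u$ by its mean value $\tfrac12$ (the oscillatory part contributing $O(1)$) gives $\Var_X(\Delta)\sim\tfrac1{\pi^2}\log L$.

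For the central limit theorem I would argue via cumulants. The determinantal generating-function identity gives $\E_X e^{s\Delta} = \det(1+(e^s-1)K_L) = \prod_i(1+(e^s-1)\lambda_i)$, the product converging locally uniformly in $s$ because $\sum_i\lambda_i<\infty$; so the $m$-th cumulant is $C_m = \sum_i g_m(\lambda_i)$ with $g_m(\lambda) := \tfrac{d^m}{ds^m}\log(1+(e^s-1)\lambda)\big|_{s=0}$. From $\log(1+(e^s-1)\lambda) = \sum_{k\ge1}\tfrac{(-1)^{k+1}}{k}(e^s-1)^k\lambda^k$ together with $(e^s-1)^k = s^k + O(s^{k+1})$ one sees each $g_m$ is a polynomial in $\lambda$ of degree $\le m$; moreover $g_m(0)=0$, and $\log(1+(e^s-1)\cdot 1) = s$ forces $g_m(1)=0$ for $m\ge2$. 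Hence $g_m(\lambda)=\lambda(1-\lambda)q_m(\lambda)$ for a polynomial $q_m$, so $|g_m(\lambda)|\le c_m\,\lambda(1-\lambda)$ on $[0,1]$ with $c_m:=\max_{[0,1]}|q_m|$, whence $|C_m|\le c_m\sum_i\lambda_i(1-\lambda_i) = c_m\,\Var_X(\Delta)$ for all $m\ge2$. Putting $V=\Var_X(\Delta)$, the cumulants of $Y_L:=(\Delta-\E_X\Delta)/\sqrt V$ are $0$, $1$, and $C_m/V^{m/2}=O(V^{1-m/2})\to0$ for $m\ge3$ since $V\to\infty$; as these are exactly the cumulants of $N(0,1)$ in the limit, and the standard normal is moment-determinate, the method of moments gives $Y_L\Rightarrow N(0,1)$.

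I expect the main obstacle to be the rigor of this last step rather than any single computation: justifying that $K_L$ is trace class with spectrum in $[0,1]$ (where band-limitedness of the sine kernel is used), that the Fredholm-determinant identity holds and is differentiable in $s$ at the origin, and that convergence of all cumulants forces convergence in distribution. The variance asymptotics and the divisibility bound $|g_m(\lambda)|\le c_m\lambda(1-\lambda)$ are routine by comparison. If one prefers, the last step can instead be run with characteristic functions $\E_X e^{it\Delta}=\prod_i(1+(e^{it}-1)\lambda_i)$ in place of the moment generating function, using the same cumulant estimates.
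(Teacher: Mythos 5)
Your argument is correct, but note that the paper does not prove this statement at all: the Costin--Lebowitz theorem is quoted there only as a known result from the determinantal point process literature, to motivate the analogy with the zeta-zero central limit theorems. What you have written is essentially the standard proof of that result (the cumulant version appears in Soshnikov's work, which the paper cites): the identification $\E_X\Delta=\Tr K_L$, $\Var_X(\Delta)=\Tr K_L-\Tr K_L^2=\sum_i\lambda_i(1-\lambda_i)$, the $\tfrac1{\pi^2}\log L$ asymptotic from the off-diagonal decay of the sine kernel, and the bound $|C_m|\le c_m\Var_X(\Delta)$ coming from the divisibility $g_m(\lambda)=\lambda(1-\lambda)q_m(\lambda)$ are all sound; the normalized cumulants of order $m\ge3$ then vanish because the variance diverges. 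One simplification worth knowing: since $0\le K_L\le1$ is trace class, the identity $\E_X z^{\Delta}=\det\bigl(1+(z-1)K_L\bigr)=\prod_i\bigl(1+(z-1)\lambda_i\bigr)$ says precisely that $\Delta$ is distributed as a sum of independent Bernoulli$(\lambda_i)$ variables, so once $\Var_X(\Delta)\to\infty$ the central limit theorem follows directly from Lindeberg--Feller (bounded summands, divergent variance), with no cumulant bookkeeping or appeal to moment-determinacy needed. The points you flag as needing care (trace-class property, validity and analyticity of the Fredholm determinant identity) are exactly the right ones and are covered by the standard theory in the reference \cite{HoKrPeVi} that the paper cites.
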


For more general test functions, the analogue for determinantal point processes of Theorem \ref{generalmesoconv} is a corollary of both Theorem 3 or 4 of Soshnikov \cite{So1}, who attributes this corollary to Spohn \cite{Sp}. Proofs of closely related results can also be found in \cite{So2}.

\begin{thm}
Let $X$ be a determinantal point process on $\mathbb{R}$ with sine kernel $K(x,y) = \tfrac{\sin \pi(x-y)}{\pi(x-y)}$. For $\eta$ a Schwartz function and $L$ a positive number, define
$$
\Delta_\eta = \sum \eta(x_i/L)
$$
where $((x_i))$ are the points of the point process. Note that for such $\eta$, $|x||\hat{\eta}(x)|^2$ is always integrable. We have
$$
\E_{X} \Delta_\eta = L\int_\mathbb{R}\eta(\xi) d\xi,
$$
$$
\Var_{X}(\Delta_\eta) \sim \int_{-\infty}^\infty |x| |\hat{\eta}(x)|^2 dx
$$
and in distribution
$$
\frac{\Delta_\eta -\E \Delta_\eta}{\sqrt{\Var \Delta_\eta}} \Rightarrow N(0,1)
$$
as $L\rightarrow\infty$.
\end{thm}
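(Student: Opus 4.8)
\textit{Proof proposal.}
The plan is to follow the cumulant method of Costin--Lebowitz and Soshnikov (the convergent-variance case being Spohn's): evaluate $\E_X\Delta_\eta$ and $\Var_X(\Delta_\eta)$ exactly from the correlation functions, and then establish the central limit theorem by showing that all cumulants of order at least three tend to $0$. Write $f(x):=\eta(x/L)$, and recall that the sine kernel $K(x,y)=S(x-y)$ with $S(u)=\tfrac{\sin\pi u}{\pi u}=\widehat{\mathbf 1_{[-1/2,1/2]}}(u)$ is the kernel of the orthogonal projection $\Pi$ of $L^2(\mathbb R)$ onto the Paley--Wiener space of functions whose Fourier transform is supported in $[-1/2,1/2]$; in particular $\Pi^2=\Pi$, whence $S(0)=1$ and $\int S(x-y)^2\,dy=1$ for every $x$. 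As the $1$- and $2$-point correlation functions are $\rho_1(x)=K(x,x)=1$ and $\rho_2(x,y)=1-S(x-y)^2$, one gets at once
\[
\E_X\Delta_\eta=\int_{\mathbb R} f(x)\,dx=L\int_{\mathbb R}\eta,
\]
and, using $\int S(x-y)^2\,dy=1$,
\[
\Var_X(\Delta_\eta)=\int f^2-\iint f(x)f(y)S(x-y)^2\,dx\,dy=\tfrac12\iint\big(f(x)-f(y)\big)^2S(x-y)^2\,dx\,dy.
\]
Expanding the square and invoking Plancherel together with $\widehat{S^2}=\mathbf 1_{[-1/2,1/2]}*\mathbf 1_{[-1/2,1/2]}$ (the triangle function $\max(1-|\xi|,0)$), the variance equals $\int_{\mathbb R}\min(|\xi|,1)|\hat f(\xi)|^2\,d\xi=\int_{\mathbb R}\min(|x|,L)|\hat\eta(x)|^2\,dx$, which increases to $\sigma_\infty^2:=\int_{\mathbb R}|x||\hat\eta(x)|^2\,dx$ by monotone convergence; this limit is finite because $\hat\eta$ is Schwartz, and positive provided $\eta\not\equiv0$, which we henceforth assume (the normalization being vacuous otherwise). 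Set $\sigma_L^2:=\Var_X(\Delta_\eta)$.

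Next I reduce the central limit theorem to a statement about cumulants. Since $\eta$ decays rapidly, $(e^{sf}-1)\Pi$ is trace class for real $s$ near $0$, so $\Delta_\eta$ has finite exponential moments near $0$ and, by the Fredholm determinant formula for determinantal processes,
\[
\log\E_X e^{s\Delta_\eta}=\log\det\!\big(I+(e^{sf}-1)\Pi\big)=\sum_{\ell\ge1}\frac{(-1)^{\ell-1}}{\ell}\int_{\mathbb R^\ell}\prod_{i}\big(e^{sf(x_i)}-1\big)\prod_{i} S(x_i-x_{i+1})\,d\mathbf x
\]
(indices modulo $\ell$). Differentiating in $s$ at $0$ expresses the $m$-th cumulant $C_m:=C_m(\Delta_\eta)$ as a finite sum of such integrals with $\prod_i f(x_i)^{m_i}$ ($\sum m_i=m$) in place of $\prod_i(e^{sf(x_i)}-1)$. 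Because $\sigma_L\to\sigma_\infty\in(0,\infty)$, it suffices, by the method of moments (a centered Gaussian is determined by its moments) and Slutsky's lemma, to verify that $C_1=\E_X\Delta_\eta$, that $C_2=\sigma_L^2$, and that $C_m\to0$ for each fixed $m\ge3$; these yield $\Delta_\eta-\E_X\Delta_\eta\Rightarrow N(0,\sigma_\infty^2)$, and dividing by $\sigma_L$ gives convergence to $N(0,1)$.

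It remains to control $C_m$ for $m\ge3$, and this is the main obstacle. Rescaling $x_i=Lu_i$ and then localizing each block of variables via $u_i=u_1+a_i/L$, the rescaled cyclic chain $\prod_i S(L(u_i-u_{i+1}))$ together with its Jacobian acts like a kernel that collapses all variables in a block to a single point: indeed $\int_{\mathbb R^{\ell-1}}\prod_{i=1}^{\ell}S(a_i-a_{i+1})\,da_2\cdots da_\ell=1$ (with $a_1:=0$, indices mod $\ell$), which follows from $\widehat S=\mathbf 1_{[-1/2,1/2]}$ being idempotent under multiplication. Hence the $O(L)$ part of $\log\E_X e^{s\Delta_\eta}$ equals $\sum_{\ell\ge1}\frac{(-1)^{\ell-1}}{\ell}L\int_{\mathbb R}(e^{s\eta(u)}-1)^\ell\,du=L\int_{\mathbb R}\log e^{s\eta(u)}\,du=sL\int_{\mathbb R}\eta$, which is linear in $s$; consequently $C_m=o(L)$ for every $m\ge2$, and for $m=2$ the next-order term is precisely $\sigma_\infty^2$, matching the variance above. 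The crux is to show that for $m\ge3$ the subleading contributions actually tend to $0$. These corrections come from Taylor expanding $\eta(u+a/L)-\eta(u)=O_\eta(a/L)$ and from the slowly decaying tails of $S^2,\,S*S,\dots$; smoothness and rapid decay of $\eta$ render each such contribution $O(L^{-1})$ up to logarithmic factors, but one must verify this uniformly over the finitely many terms of $C_m$. The cleanest route is to pass to the Fourier side, where $C_m$ becomes an explicit convolution-type integral in $\ell-1$ variables of products of $\hat\eta$ against the indicator constraints arising from $\widehat S$; the $m=2$ integral sits exactly at the borderline of convergence, while those with $m\ge3$ carry strictly more decay and vanish in the limit. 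Combining the three parts completes the proof.
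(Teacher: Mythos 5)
Note first that the paper does not prove this theorem at all: it is quoted as a known result, a corollary of Theorems 3 and 4 of Soshnikov \cite{So1}, attributed there to Spohn \cite{Sp}. (The appendix's Theorem \ref{a4} proves the band-limited version of the moment computation via Diaconis--Shahshahani, and the truncation machinery of Sections 4--5 would then give an alternative route to the full statement.) Your computations of the mean and of the variance are correct and complete, and the reduction of the CLT to the vanishing of the cumulants $C_m$ for $m\ge 3$ is the right strategy --- it is exactly the strategy of Costin--Lebowitz and Soshnikov.

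The gap is that the vanishing of $C_m$ for $m\ge3$ is not an "obstacle" left at the end of the proof; it \emph{is} the theorem, and your treatment of it is a restatement of what must be shown rather than an argument. Two specific points. First, your claim that after removing the $O(L)$ part each remaining contribution is "$O(L^{-1})$ up to logarithmic factors" is false term by term: the individual traces $\Tr(f^{m_1}\Pi\cdots f^{m_\ell}\Pi)$ entering $C_m$ differ from one another by quantities of order $O(1)$ (e.g.\ $\Tr(f^{2}\Pi f\Pi)-\Tr(f^{3}\Pi)=\Tr(f^{2}[\Pi,f]\Pi)$ is controlled by Hilbert--Schmidt norms of commutators and does not decay in $L$), exactly as your own $m=2$ case shows, where the subleading term is the constant $\sigma_\infty^2$. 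What must be proved is that for $m\ge3$ these $O(1)$ contributions \emph{cancel} across the alternating combinatorial sum over compositions $(m_1,\dots,m_\ell)$ of $m$; this cancellation is the content of Soshnikov's Main Combinatorial Lemma (equivalently, of the strong Szeg\H{o}/Kac--Akhiezer asymptotics $\log\det(I+(e^{sf}-1)\Pi)=s\int f+\tfrac{s^2}{2}\int|\xi|\,|\hat f|^2\,d\xi+o(1)$, whose whole point is that the expansion terminates at the quadratic term up to $o(1)$). Second, the passage "to the Fourier side, where ... those with $m\ge3$ carry strictly more decay" is where that lemma would have to live, and no estimate is offered; the $\ell$-fold cyclic integrals over $[-1/2,1/2]^\ell$ with constraints from $\hat S=\mathbf 1_{[-1/2,1/2]}$ do not individually vanish. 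To repair the proof you would need either to carry out this combinatorial cancellation directly, or to import it from the unitary-group side as the paper's appendix does (Theorem \ref{a2} plus Proposition \ref{a3} and a limit $n\to\infty$), together with a truncation argument to pass from band-limited to Schwartz test functions.
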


Note that in this case there is no limit on the growth rate of $L$, which is not surprising since the theorem is for a single point process $X$ rather than a series of point processes.

\textit{Remark:} The test functions for this theorem are Schwartz class. One expects the theorem to be true for any test functions $\eta$ for which $|x||\hat{\eta}(x)|^2$ is integrable, a considerably larger class. A statement of this does not appear to be in the literature for the sine-kernel determinantal point process, but analogous results of this sort are known for the point processes induced by eigenvalues of random $n \times n$ unitary matrices, and test functions that count all $n$ eigenvalues. (One may call such theorems `macroscopic' as opposed to mesoscopic theorems for test functions that count only $o(n)$ eigenvalues.) Such results are known as strong Szeg\H{o} theorems, and considerable literature surrounds the subject. (See \cite{Si}, Ch. 6 for instance.)

Likewise, the analogue of Theorem \ref{generalmeso} does not seem to be in the literature, but an analogue was proved by Diaconis and Evans \cite{DiEv} for counts of all $n$ eigenvalues of unitary matrices, among other ensembles. The perspective of Diaconis and Evans is perhaps most similar to ours here.

We should therefore expect that Theorem \ref{generalmeso} is true even in the case that $\int |x||\hat{\eta}(x)|^2\,dx$ converges with no more restrictions on $\eta$ than a bound on variation -- this would encompass Theorem \ref{generalmesoconv} -- but in the latter theorem we require not only that this integral converge, but that it converge somewhat rapidly. Bounding an error term prevents us from accessing the results in between the two theorems, even though by analogy we should fully expect them to be true.

In fact, Fujii proved a more general result than Theorem \ref{Fujiimeso}, encompassing macroscopic intervals as well. In order to state Fujii's result succinctly, we recall the definition
$$
S(t) := \arg \zeta(\tfrac{1}{2}+it),
$$
where argument is defined by a continuous rectangular path from $2$ to $2+it$ to $\tfrac{1}{2}+it$, beginning with $\arg 2 = 0$, and by upper semicontinuity in case this path passes through a zero. $S(t)$, as it ends up, is small and oscillatory, and our interest in it derives from the fact that it appears as an error term in the zero counting function:
\begin{equation}
\label{zerocount}
N(T) = \tfrac{1}{\pi}\arg \Gamma\big(\tfrac{1}{4}+i\tfrac{T}{2}\big) - \tfrac{T}{2\pi}\log\pi + 1 + S(T).
\end{equation}

\begin{thm} [Fujii's macroscopic central limit theorem]
\label{Fujiimacro}
Let $X_T$ be as in Theorem \ref{Fujiimeso}, and $n(T)$ with $\log T \lesssim n(T) \lesssim T$. Define $\tilde{\Delta} = S(t + \tfrac{2\pi n(T)}{\log T}) - S(t)$. Then
$$
\E_{X_T} \tilde{\Delta} = o(1),
$$
$$
\Var_{X_T}(\tilde{\Delta}) \sim \frac{1}{\pi^2}\log \log T,
$$
and in distribution
$$
\frac{\tilde{\Delta}}{\sqrt{\Var \tilde{\Delta}}} \Rightarrow N(0,1)
$$
as $T\rightarrow\infty$.
\end{thm}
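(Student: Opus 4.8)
Since Theorem~\ref{Fujiimacro} is purely a statement about $S(t)$, the natural route --- essentially the one Selberg used for the central limit theorem of $S(t)$ itself, adapted by Fujii --- is to reduce $\tilde\Delta$ to a short Dirichlet polynomial over primes and then apply the method of moments. Write $\delta = \delta(T) = 2\pi n(T)/\log T$, so that $2\pi \lesssim \delta \lesssim T/\log T$ under the hypothesis $\log T \lesssim n(T) \lesssim T$. On RH one has a Selberg-type representation: for a truncation parameter $X$, and all but an $o(T)$ set of $t \in [T,2T]$,
\begin{equation*}
S(t) = -\frac{1}{\pi}\,\Im\sum_{n \le X}\frac{\Lambda_X(n)}{n^{1/2+it}\log n} + E_X(t),
\end{equation*}
where $\Lambda_X$ is the usual truncated von Mangoldt weight --- so the sum is effectively over primes $p \le X$ with weights close to $1$, the prime powers affecting no moment at leading order --- and the key point is that, on RH, every moment $\E_{X_T}|E_X(t)|^{2k}$ tends to $0$ provided $\log X = (\log T)^{1+o(1)}$. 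The convenient choice is $X = T^{1/\log\log T}$: it is small, $X = T^{o(1)}$, which is what makes the orthogonality estimates below usable, yet large enough that $\log\log X \sim \log\log T$. Subtracting this formula at $t$ and at $t+\delta$ gives
\begin{equation*}
\tilde\Delta = -\frac{1}{\pi}\,\Im\sum_{n\le X}\frac{\Lambda_X(n)\,(n^{-i\delta}-1)}{n^{1/2+it}\log n} + \big(E_X(t+\delta)-E_X(t)\big).
\end{equation*}
The first assertion, $\E_{X_T}\tilde\Delta = o(1)$, needs none of this: it is immediate from the classical bound $\int_0^U S(u)\,du = O(\log U)$ applied at $U=2T$ and $U=2T+\delta$.

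For the variance I would square $\tilde\Delta - \E_{X_T}\tilde\Delta$, integrate over $[T,2T]$, discard $E_X$ by Cauchy--Schwarz, and invoke the orthogonality relation $\frac1T\int_T^{2T}(m/n)^{it}\,dt = \mathbf1_{m=n} + O\big((T\log(m/n))^{-1}\big)$, legitimate for $m,n\le X$ since $X=T^{o(1)}$. This collapses the double sum to its diagonal, and, using Mertens' theorem $\sum_{p\le X}1/p = \log\log X + O(1)$ together with $\log\log X\sim\log\log T$, one is left with
\begin{equation*}
\Var_{X_T}(\tilde\Delta) = \frac{1}{\pi^2}\sum_{p\le X}\frac{1-\cos(\delta\log p)}{p} + o(1).
\end{equation*}
It then remains to bound the oscillatory sum $\sum_{p\le X}\cos(\delta\log p)/p$ by $o(\log\log T)$ uniformly in the stated range of $\delta$. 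Writing it as $\Re\sum_{p\le X}p^{-1-i\delta}$, which equals $\log|\zeta(1+i\delta)|$ up to a contribution that partial summation against the prime number theorem bounds by $O(1)$ uniformly for $\delta\ge 2\pi$ (the relevant integral being $\mathrm{Ci}(\delta\log X)-\mathrm{Ci}(\delta\log 2) = O(1)$), the estimate follows from Littlewood's RH bound $\log|\zeta(1+i\delta)| = o(\log\log\delta) = o(\log\log T)$. Hence $\Var_{X_T}(\tilde\Delta)\sim\tfrac1{\pi^2}\log\log T$.

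For the central limit theorem, set $V_T := \Var_{X_T}(\tilde\Delta)\to\infty$ and prove convergence of moments: for each fixed $k$,
\begin{equation*}
\E_{X_T}\Big[\Big(\tfrac{\tilde\Delta - \E_{X_T}\tilde\Delta}{\sqrt{V_T}}\Big)^k\Big] \longrightarrow \begin{cases}(k-1)!!, & k\text{ even},\\ 0, & k\text{ odd}.\end{cases}
\end{equation*}
After removing the error terms (again using the smallness of all moments of $E_X$), one expands the $k$-th power of the prime polynomial and integrates term by term; the multiplicative orthogonality $\frac1T\int_T^{2T}\prod_{j=1}^k p_j^{\,i\varepsilon_j t}\,dt = \mathbf1_{\{\prod_j p_j^{\varepsilon_j}=1\}} + o(1)$ --- the off-diagonal contribution being negligible because $X=T^{o(1)}$ --- together with unique factorization forces the surviving tuples to be exactly those whose primes can be matched into pairs carrying opposite signs $\varepsilon_j$. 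The sum over such matchings reproduces the Wick formula for the moments of a centred Gaussian of variance $V_T$ (the terms with triple-or-higher coincidences contributing $o(V_T^{k/2})$), while for odd $k$ no matching exists and the limit is $0$; convergence of all moments to those of $N(0,1)$ then gives the stated convergence in distribution. The main obstacle, and essentially the whole of the work, is the input to the first step: establishing the Selberg-type formula with enough uniformity that \emph{all} moments of $E_X$ --- not merely the second --- are negligible at the small truncation $X = T^{1/\log\log T}$, and controlling the oscillatory prime sum uniformly over the entire macroscopic band $2\pi\lesssim\delta\lesssim T/\log T$; both rest on RH (through a contour shift past the critical line), and the combinatorial moment calculation that follows is routine.
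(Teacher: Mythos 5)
You should be aware that the paper does not actually prove Theorem \ref{Fujiimacro}: it is quoted from Fujii \cite{Fu}, the author explicitly declines to reprove it (``We shall not prove so in this note''), and he only remarks that his explicit-formula machinery (Lemma \ref{X7} and Corollary \ref{consequence}, with the Fourier-support parameter taken small so the argument survives $n(T)=O(T^{1-\delta})$) could be adapted to recover it. So there is no in-paper proof to compare against; what you have written is essentially Fujii's original argument, which the introduction describes: Selberg's $L^{2k}$ approximation \eqref{selberg} of $S(t)$ by a short Dirichlet polynomial over primes, followed by the moment method with the diagonal/pairing combinatorics. Your outline --- reduce to $\sum_p(p^{-i\delta}-1)p^{-1/2-it}$, extract the variance from the diagonal, show the cross term $\sum_{p\le X}\cos(\delta\log p)/p$ is $o(\log\log T)$, then run the Wick computation for higher moments --- is the correct skeleton. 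The trade-off against the paper's own approach is the expected one: the Selberg mollification handles the sharp cutoff in $S(t+\delta)-S(t)$ directly and reaches the macroscopic range, while the paper's harmonic-analysis route is designed for general band-limited test functions in the mesoscopic range and does not scale to indicator functions without Selberg-type input.

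Two points in your sketch need more care. First, Selberg's approximation is proved with truncation $X=T^{1/k}$ depending on the moment being taken, for which the $2k$-th moment of the error is $O(1)$; shortening to $X=T^{1/\log\log T}$ (note $\log X=(\log T)^{1-o(1)}$, not $(\log T)^{1+o(1)}$ as you wrote) discards the primes in $(T^{1/\log\log T},T^{1/k}]$, whose contribution has $2k$-th moment of order $(\log\log\log T)^{k}$ --- not $o(1)$ as you claim, though still $o(V_T^{k/2})$, which is what the argument actually requires. Second, the uniform bound on the oscillatory sum over the entire band $2\pi\lesssim\delta\lesssim T/\log T$ is the genuinely delicate step; your reduction to $\log|\zeta(1+i\delta)|$ together with Littlewood's conditional bounds $\zeta(1+i\delta)^{\pm1}\ll\log\log\delta$ gives $o(\log\log T)$ and is legitimate on RH, which the paper assumes throughout --- but the paper also asserts Theorem \ref{Fujiimacro} holds unconditionally, and for that one must instead follow Selberg's zero-density treatment rather than a contour shift past the critical line.
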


Note that in this case, if $\Delta$ is defined as before with respect to the function $N(t)$, $\E_{X_T} \Delta$ does not have quite as nice an expression owing to the growth of the logarithm function.

In fact, it will in general prove preferable to work with $S(t)$ in place of $N(t)$ in the computations that follow. Differentiating \eqref{zerocount}, we have
$$
\big[\tilde{d}(\xi)-\tfrac{\Omega(\xi)}{2\pi}\big]\,d\xi = dS(\xi),
$$
where
$$
\tilde{d}(\xi) := \sum_{\gamma}\delta(\xi-\gamma),
$$
with the sum over zeroes counted with multiplicity, and
$$
\Omega(\xi) := \tfrac{1}{2}\tfrac{\Gamma'}{\Gamma}\big(\tfrac{1}{4}+i\tfrac{\xi}{2}\big) +\tfrac{1}{2}\tfrac{\Gamma'}{\Gamma}\big(\tfrac{1}{4}-i\tfrac{\xi}{2}\big) - \log \pi.
$$

Making use of the moment method\footnote{An introduction to the moment method can be found in for example \cite{Ta} section 2.2.3.}  and Stirling's formula \footnote{Stirling's formula (proved in \cite{AnAsRo} section 1.4 for instance) implies that $\frac{\Omega(\xi)}{2\pi} = \frac{\log\big((|\xi|+2)/2\pi\big)}{2\pi} + O\Big(\frac{1}{|\xi|+2}\Big)$.}, we see that Theorem \ref{generalmeso} will be implied by
\begin{thm}
\label{smoothedmeso}
For $\eta$ a real-valued function with compact support and bounded variation, for $n(T)\rightarrow\infty$ as $T\rightarrow\infty$ in such a way that $n(T) = o(\log T)$,
$$
\frac{1}{T}\int_T^{2T}\bigg[\int_\mathbb{R} \eta\big(\tfrac{\log T}{2\pi n(T)}(\xi-t)\big)\,dS(\xi)\bigg]^k \,dt = (c_k+o(1))\bigg[\int_{-n(T)}^{n(T)}|x||\hat{\eta}(x)|^2\,dx \bigg]^{k/2},
$$
provided the integral on the right diverges. Here $c_{\ell} := (\ell-1)!!$ for even $\ell$, and $c_{\ell} := 0$ for odd $\ell$, are the moments of a standard normal variable.
\end{thm}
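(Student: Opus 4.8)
The plan is to prove Theorem~\ref{smoothedmeso} by the moment method: I would reduce the linear statistic $Y_T(t):=\int_{\mathbb{R}}\eta\big(\tfrac{\log T}{2\pi n(T)}(\xi-t)\big)\,dS(\xi)$ to a short Dirichlet polynomial in $t$ and then evaluate its mixed moments over $t\in[T,2T]$ via the near-orthogonality of the characters $t\mapsto m^{it}$. Throughout put $L=\log T$, $n=n(T)$, and $\psi_t(\xi)=\eta\big(\tfrac{L}{2\pi n}(\xi-t)\big)$, so that $\widehat{\psi_t}\big(\tfrac{\log m}{2\pi}\big)=\tfrac{2\pi n}{L}\,m^{-it}\,\hat\eta\big(\tfrac{n\log m}{L}\big)$.

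\textbf{Reduction to a Dirichlet polynomial.} Since $dS=\big(\tilde d-\tfrac{\Omega}{2\pi}\big)\,d\xi$, integration by parts (in the Stieltjes sense) gives $Y_T(t)=-\int_{\mathbb{R}}S(\xi)\,d\psi_t(\xi)$, the boundary terms vanishing because $\psi_t$ has compact support. Substituting an approximate explicit formula for $S$ valid under RH -- of the type going back to Selberg, $S(\xi)\approx\tfrac1\pi\,\mathrm{Im}\sum_{m\le X}\tfrac{\Lambda(m)}{m^{1/2+i\xi}\log m}$ -- and simplifying (each $\int_{\mathbb{R}}e^{-i\xi\log m}\,d\psi_t(\xi)=i\log m\cdot\widehat{\psi_t}(\tfrac{\log m}{2\pi})$) yields, for each fixed $k$ with the choice $X=T^{1/(3k)}$,
$$
Y_T(t)=P_T(t)+E_T(t),\qquad P_T(t)=\sum_{m\le X}\big(b_m\,m^{-it}+\overline{b_m}\,m^{it}\big),\quad b_m:=-\tfrac{n}{L}\,\tfrac{\Lambda(m)}{\sqrt m}\,\hat\eta\big(\tfrac{n\log m}{L}\big),
$$
with $E_T$ majorised by a Dirichlet polynomial of length a fixed power of $X$ carrying a gain of $(\log T)^{-1}$. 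Working with $S$ rather than with $\tilde d$ directly matters here: it avoids the contribution of the pole of $\zeta$ at $s=1$, which would otherwise enter as an uncontrollably large term once the prime sum is taken of length approaching $T$. Truncating at $X=T^{1/(3k)}$ confines the eventual frequency variable to $|x|\le n/(3k)$ rather than $|x|\le n$, but the omitted range contributes only $\int_{n/(3k)\le|x|\le n}|x|\,|\hat\eta(x)|^2\,dx=O_k(1)$, negligible against $\int_{-n}^{n}|x|\,|\hat\eta(x)|^2\,dx$ \emph{precisely because the latter diverges} -- it is the divergent-variance hypothesis that creates the room to truncate at a $k$-dependent length. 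One cannot sidestep this by first smoothing $\eta$, since smoothing cuts the variance off at a bounded frequency and destroys its growth; the discontinuities of $\eta$ must be carried through the formula intact. I expect this reduction -- making the passage to $P_T$ rigorous while keeping the high moments of $E_T$ under control, the point at which RH and the bounded-variation hypothesis interact most delicately -- to be the main obstacle.

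\textbf{Disposing of the error.} Expanding $(P_T+E_T)^k$ and applying the Cauchy--Schwarz inequality to each mixed term, it suffices to prove $\tfrac1T\int_T^{2T}|E_T(t)|^{2j}\,dt=o\big(V_T^{\,j}\big)$ for $1\le j\le k$, where $V_T:=\tfrac1T\int_T^{2T}P_T(t)^2\,dt=2\sum_{m\le X}|b_m|^2$ and where the needed bound $\tfrac1T\int_T^{2T}|P_T|^{2(k-j)}\asymp V_T^{\,k-j}$ is furnished by the diagonal computation below. Bounding $|E_T(t)|$ by a constant multiple of the total variation of $\eta$ times a local average near $\xi=t$ of the Selberg error, and applying a standard mean value estimate for Dirichlet polynomials, one gets $\tfrac1T\int_T^{2T}|E_T(t)|^{2j}\,dt\ll(\log T)^{-2j}\big(\sum_{m\le X^{3}}\tfrac{\Lambda(m)^2}{m}\big)^{j}\ll(\log T)^{-2j}(\log X)^{2j}\ll_k 1$, which is $o(V_T^{\,j})$ because $V_T\to\infty$ in the divergent case; the hypothesis $n=o(\log T)$ enters here, keeping the coefficients $b_m$ uniformly $o(1)$.

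\textbf{Evaluating the moments and the variance.} Expanding $P_T(t)^k$ and integrating over $t\in[T,2T]$, each choice of signed prime powers contributes a factor $\tfrac1T\int_T^{2T}\prod_i m_i^{\mp it}\,dt$, equal to $1+o(1)$ when the product of the ``$m^{-it}$'' factors matches that of the ``$m^{it}$'' factors, and $O\big(T^{-1}\prod_i m_i\big)$ otherwise. With all $m_i\le T^{1/(3k)}$ the total off-diagonal contribution is $\ll(n/\log T)^k\,T^{-1/2}=o(1)$, while the diagonal configurations that are not perfect conjugate matchings -- which, as the $m_i$ are prime powers, are forced to collide inside the powers of a bounded set of primes -- contribute only $o(V_T^{\,k/2})$ by standard divisor-type estimates (again using $n=o(\log T)$). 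The surviving main term is the sum over all pairings of the $k$ factors into conjugate pairs; it vanishes for odd $k$ and equals $(2\ell-1)!!\,\big(2\sum_{m\le X}|b_m|^2\big)^{\ell}=c_k\,V_T^{\,k/2}$ for $k=2\ell$. Finally, since $b_m$ is supported on prime powers with the primes dominating, partial summation against the prime number theorem gives
$$
V_T=2\Big(\tfrac nL\Big)^{2}\sum_{p\le X}\tfrac{(\log p)^{2}}{p}\,\Big|\hat\eta\big(\tfrac{n\log p}{L}\big)\Big|^{2}+O(1)\;\sim\;2\int_{0}^{n/(3k)}x\,|\hat\eta(x)|^{2}\,dx\;\sim\;\int_{-n(T)}^{n(T)}|x|\,|\hat\eta(x)|^{2}\,dx,
$$
the last step once more absorbing the truncation into the divergence of the integral. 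Assembling the three steps yields
$$
\tfrac1T\int_T^{2T}Y_T(t)^k\,dt=c_k\,V_T^{\,k/2}+o\big(V_T^{\,k/2}\big)=(c_k+o(1))\Big(\int_{-n(T)}^{n(T)}|x|\,|\hat\eta(x)|^{2}\,dx\Big)^{k/2},
$$
which is the assertion of Theorem~\ref{smoothedmeso} (the cases $k=2$ and general $k$ encoding, respectively, the variance asymptotic and, after normalising, the convergence of all moments to those of $N(0,1)$).
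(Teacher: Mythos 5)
Your proposal is essentially correct for the theorem as stated, but it is a genuinely different proof from the paper's. You pass through Selberg's $L^{2k}$ approximation of $S$ by a short Dirichlet polynomial, integrate it by parts against $d\psi_t$, and absorb the $O_k(1)$ moments of the Selberg error into the divergent variance; the paper deliberately avoids Selberg's mollification, instead applying the explicit formula (Theorem \ref{explicitform}) to a band-limited surrogate $\check K_{n(T)}\ast\eta$ of the test function and controlling the replacement error through the maximal-function upper bound of Theorem \ref{10}. The trade-off is this: your route is shorter, closer to Fujii's original argument (and to the Bourgade--Kuan proof cited in the introduction), and it succeeds precisely because of the proviso that $\int|x||\hat\eta(x)|^2\,dx$ diverges, so that every $O_k(1)$ error is automatically $o(V_T^{k/2})$ --- a point you correctly identify and exploit twice, once to truncate at $X=T^{1/(3k)}$ and once to dispose of $E_T$. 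The paper's heavier route degrades gracefully: its errors are genuinely $o(1)$ for smoother $\eta$, which is what yields Theorem \ref{generalmesoconv} in the convergent-variance case (where your $O_k(1)$ Selberg error would swamp the main term), and it makes the connection to the microscopic Rudnick--Sarnak statistics transparent. Two inaccuracies in your write-up, neither fatal here: the $2j$-th moment of the Selberg error is $O_k(1)$, not $O((\log T)^{-2j}(\log X)^{2j})$ with a genuine net saving --- the $(\log T)^{-1}$ gain in Selberg's lemma is cancelled by the mean square of the majorizing Dirichlet polynomial, which is exactly why this method is confined to the divergent case --- and your parenthetical claim that one ``cannot sidestep this by first smoothing $\eta$'' is false, since the paper smooths at the growing frequency scale $n(T)$, which preserves the variance asymptotic while making the test function band-limited at scale $\log T$. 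You should also spell out the estimate showing that diagonal configurations with three or more coinciding primes, and those involving higher prime powers, contribute $O((n/\log T)^2)=o(1)$; ``standard divisor-type estimates'' is doing real work there.
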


Theorem \ref{generalmesoconv}, as well, will follow from the above statement where $\eta$ is instead restricted as in Theorem \ref{generalmesoconv}.

In order to prove his results, Fujii made use of the moment method, and the following (unconditional) approximation due to Selberg \cite{Sel1},\cite{Sel2},
\begin{equation}
\label{selberg}
\frac{1}{T}\int_T^{2T}\Big[S(t) + \tfrac{1}{\pi}\sum_{p\leq T^{1/k}}\frac{\sin(t \log p)}{\sqrt{p}}\Big]^{2k} \,dt = O(1),
\end{equation}
which Selberg had used earlier to derive a more global central limit theorem for $S(t)$,
$$
\frac{1}{T}\int_T^{2T}|S(t)|^{2k}\,dt \sim \tfrac{(2k-1)!!}{(2\pi^2)^k}(\log\log T)^k.
$$

These formulas are sufficient to prove Theorem \ref{generalmeso} for test functions $\eta$ which are sums of a finite number of indicator functions. They break down, however, in an attempt to prove the theorem for general $\eta$, since, although one can approximate $\eta$ by simple functions, the error terms thus generated rapidly overwhelm the main terms of the moments.

We do not therefore make use of Selberg's approximation for $S(t)$, and indeed do not pass through his well known mollification for $\zeta'/\zeta$, which Selberg and many other authors typically make use of to prove statistical theorems about zeta zeroes.

Our approach is outlined in the next section. Very roughly stated, it is a sort of weak analogue of the approach of Selberg and Fujii. In this, we follow the derivation \cite{HuRu} of Hughes and Rudnick of mock-gaussian behavior in the microscopic regime with respect to sufficiently smooth test functions. We extend these computations to the mesoscopic regime, still requiring smoothness, but the key point which allows us to obtain our central limit theorem is that any test function will become sufficiently smooth when dilated as they are in the central limit theorems \ref{generalmeso} and \ref{generalmesoconv}. This is one clarifying feature of our proof. The proof of Fujii's theorem making use of Selberg's approximation for $S(t)$ leaves the link between this central limit theorem and the microscopic determinantal structure of the zeroes somewhat mysterious.

This approach, with slightly more work, can be used to produce Fujii's Theorem \ref{Fujiimacro} as well, although in this case an analogue of Theorem \ref{generalmeso} is less satisfying. We shall not prove so in this note, but in the macroscopic case already if $\eta$ is so much as absolutely continuous, the variance and higher moments of $\tilde{\Delta}_\eta$ (defined in the obvious way) tend to $0$. This is a feature of the rigidity of the distribution of zeroes at this regime, which while not quite as rigid as a clock distribution (see \cite{KiSt} for a definition), resemble at this level this distribution perhaps somewhat more than they do a sine kernel determinantal point process. One should compare this analogy with the classical theorems that for a fixed $h$, $N(t+h)-N(t) \asymp \log t$ for \emph{all} sufficiently large $t$, with constants depending upon $h$. (See \cite{Ti}, Theorems 9.2 and 9.14.) In this regime, arithmetic factors play a heavy explicit role; this will be implicitly evident in the proof that follows. In this, we can recover the heuristic observations of Berry \cite{Be} regarding the origin for the variance terms in Fujii's theorems. Indeed, one can already discern, by comparing Fujii's central limit theorem to the central limit theorem of Costin and Lebowitz, that the statistics of the zeros in this regime cannot be modeled too closely by a sine-kernel determinantal process. Outside of the mesoscopic regime, these statistics demonstrate an important `resurgence phenomenon' discovered heuristically by Bogomolny and Keating, and explored in \cite{BoKe},\cite{Pe}, \cite{Ro} and \cite{Sn}.

Zeev Rudnick pointed out to the author that he had used similar ideas with Faifman in \cite{FaRu} to prove a Fujii-type central limit theorem, for counting functions with a strict cutoff, in the finite field setting.

One can apply these ideas to get central limit theorems as well for the number of low-lying zeroes of $L(s,\chi_d)$, where $\chi_d$ ranges over the family of primitive quadratic characters, by extending the microscopic statistics of Rubenstein \cite{Rub}.

After a version of this note had been posted online, Theorems \ref{generalmeso} and \ref{generalmesoconv} were independently proven, in a different manner, by Bourgade and Kuan \cite{BoKu}. They proceed by a method which makes use of the Helffer-Sj\"ostrand functional calculus and a mollification formula of Selberg. In brief summary, the dichotomy between the two treatments is that between the use of harmonic analysis (our approach) and complex analysis (the approach of Bourgrade and Kuan).

The conditions on admissible test functions in \cite{BoKu} differ very slightly from those in the theorems of this paper, but not in an important way. Either method seems to include the class of test functions -- lying within the class of test functions with converging variance -- for which a central limit theorem, by analogy with random matrix theory and discussed in the remark above, ought to be true, but for which we have no proof. It would be interesting if other approaches could fill this small but pesky limitation in our knowledge.

\section{A heuristic outline of the proof}

Here we give a heuristic sketch of our approach before proceeding to a rigorous proof.

Instead of Selberg's metric $L^k[0,T]$ approximation for the function $S(t)$, we use the distributional formula,
$$
dS(\xi) = - \int \frac{e^{ix\xi}+e^{-ix\xi}}{2} e^{-x/2}\,d\big(\psi(e^x)-e^x\big).
$$
This formula is to be understood heuristically; some restrictions are entailed on the test functions in $\xi$ against which it can be integrated, a precise statement of which are given by Theorem \ref{explicitform}. The integral on the right has an arithmetic component,
$$
-\frac{1}{\pi}\sum_n \frac{e^{i\xi\log n} + e^{-i\xi \log n}}{2}\frac{\Lambda(n)}{\sqrt{n}} \approx -\frac{1}{\pi} \sum_p \frac{e^{i \xi \log p}+e^{-i \xi \log p}}{2} \frac{\log p}{\sqrt{p}}
$$
(compare with Selberg's \eqref{selberg}), and a continuous component
$$
\int\frac{e^{ix\xi}+e^{-ix\xi}}{2} e^{x/2}\,dx.
$$
It will emerge from computations that the measure in variables $\xi_1,...,\xi_k$, given by
$$
\frac{1}{T}\int_T^{2T} \prod_{\ell=1}^k dS(\xi_\ell+t)\,dt,
$$
is extremely well approximated by substituting for $dS$ in each variable only its arithmetic component:
$$
\frac{1}{T}\int_T^{2T} \prod_{\ell=1}^k\bigg(-\frac{1}{\pi} \sum_p \frac{e^{i (\xi_\ell+t) \log p}+e^{-i (\xi_\ell +t)\log p}}{2} \frac{\log p}{\sqrt{p}}\,d\xi_\ell\bigg)\,dt,
$$
so long as the measures are being integrated against functions $f(\xi_1,...,\xi_k)$ that have their Fourier transform supported at a scale of $O(\log T)$. Said another way, this approximation is a good one so long as the test function $f$ is sufficiently smooth, observed on intervals of size $1/\log T$.

The statistics in which we will be interested for our central limit theorem are
$$
\frac{1}{T}\int_T^{2T} \int_{\xi\in\mathbb{R}^k} \prod_{\ell=1}^k \eta\Big(\tfrac{\log T}{2\pi n(T)} \xi_\ell\Big)\,dS(\xi_\ell+t)\,dt.
$$
For any `nice' function $\eta$, because $n(T)\rightarrow\infty$,  for large enough $T$, the function
$$
\eta\big(\tfrac{\log T}{2\pi n(T)} \xi_\ell\big)\cdots \eta\big(\tfrac{\log T}{2\pi n(T)} \xi_\ell\big)
$$
will be basically smooth in $\xi$ at a scale of $1/\log T$. (One would have to observe at the larger scale of $n(T)/\log T$ to see the variations in these test functions.)  Therefore the above integral can be approximated by
\begin{align*}
&\frac{1}{T}\int_T^{2T} \prod_{\ell=1}^k\int_{-\infty}^\infty \eta\Big(\tfrac{\log T}{2\pi n(T)} \xi_\ell\Big)\bigg(-\frac{1}{\pi} \sum_p \frac{e^{i (\xi_\ell+t) \log p}+e^{-i (\xi_\ell +t)\log p}}{2} \frac{\log p}{\sqrt{p}}\bigg)\,d\xi_\ell\,dt \\
&= \frac{1}{T}\int_T^{2T} \prod_{\ell=1}^k \frac{n(T)}{\log T} \sum_p \frac{\log p}{\sqrt{p}} \Big(\hat{\eta}\Big(\frac{\log p}{\log T/n(T)}\Big) e^{i t\log p} + \hat{\eta}\Big(-\frac{\log p}{\log T/n(T)}\Big)e^{-i t \log p}\Big)\, dt
\end{align*}

For finite collections of primes  $p$ and large $T$, the quantities $e^{i t \log p}$ behave like independent random variables as $t$ ranges over $[T,2T]$. We will be interested in collections of primes $p$ that grow with $T$ (with all primes $p$ so that $\log p / (\log T /n(T)) = O(1)$ in fact, owing to the decay of the function $\hat{\eta}$), but by mimicking the analysis that leads to this observation, we are able to see that the above average contracts to a quantity close to
$$
c_k\, \bigg[\big(\frac{n(T)}{\log T}\Big)^2 \sum_{\log p = O(\tfrac{\log T}{n(T)})} \frac{2 \log^2 p}{p}\hat{\eta}\Big(\frac{\log p}{\log T/n(T)}\Big)\hat{\eta}\Big(-\frac{\log p}{\log T/n(T)}\Big)\bigg]^{k/2}
$$
where $c_k$ are the moments of a standard normal variable. $c_k$ is given also by the number of possible pairings among a set of $k$ elements, and the only terms in this expression that have survived from the expression above it are pairings of equal primes in the expansion of the product inside the integral. Using the prime number theorem, we are able to show that this tends to the right hand side limit in Theorems \ref{generalmeso} and \ref{generalmesoconv}.

There is one point of this proof which deserves further commentary, as it comprises a substantial part of the technical challenge ahead; this is the claim that for any nice function $\eta$, the rescaled function $\eta\big(\tfrac{\log T}{2\pi n(T)} \xi_\ell\big)\cdots \eta\big(\tfrac{\log T}{2\pi n(T)} \xi_\ell\big)$ will be sufficiently smooth at a scale of $1/\log T$. It is certainly not true for an arbitrary function of bounded variation $\eta$ that this rescaling will be locally smooth in the sense we have used above: of having a Fourier transform supported at a scale of $\log T$. For instance, the rescaling of $\eta = \mathbf{1}_{[-1/2,1/2]}$ does not have this property, and indeed no function $\eta$ will unless $\eta$ has compact Fourier transform to begin with. What will be true, however, is that for any function $\eta$ of the sort delimited in Theorems \ref{generalmeso} and \ref{generalmesoconv}, this rescaling can be very well approximated by a function with Fourier transform supported at a scale of $\log T$. Making use of upper bounds for the average number of zeros in an interval of size $1/\log T$, we are able to show that the statistics of this approximation do not deviate much from the statistics of our original test function, and therefore obtain Theorems \ref{generalmeso} and \ref{generalmesoconv}. (Indeed, it is because we must replace test functions with approximations that induce a small error term that we must restrict our attention to a slightly smaller domain of test functions in Theorem \ref{generalmesoconv} than in \ref{generalmeso}.)

\section{Local Limit Theorems for Smooth Test Functions}

This section consists mainly in minor quantitative refinements in the argument of Hughes and Rudnick \cite{HuRu}. In turn, their argument is similar to Selberg's in making use of the fundamental theorem of arithmetic to evaluate certain integrals. Our main tool in what follows will be the well known explicit formula relating the zeroes of the zeta function to the primes.

\begin{thm}
\label{explicitform}
[The explicit formula]
For $g$ a measurable function such that $g(x) = \tfrac{g(x+)+g(x-)}{2}$, and for some $\delta > 0$,
$$
(a)\quad \int_{-\infty}^\infty e^{(\tfrac{1}{2}+\delta)|x|}|g(x)|dx < +\infty,
$$
$$
(b)\quad \int_{-\infty}^\infty e^{(\tfrac{1}{2}+\delta)|x|}|dg(x)| < +\infty,
$$
we have
$$
-\!\!\!\!\!\!\!\int_{-\infty}^\infty \hat{g}\big(\tfrac{\xi}{2\pi}\big) \, dS(\xi) = \int_{-\infty}^\infty [g(x)+g(-x)]e^{-x/2}d\big(e^x-\psi(e^x)\big),
$$
where here $\psi(x) = \sum_{n\leq x} \Lambda(n)$, for the von Mangoldt function $\Lambda$.
\end{thm}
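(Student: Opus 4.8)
This is a standard form of the explicit formula (Riemann--Weil / Guinand--Weil), written in the Fourier--Stieltjes normalization convenient here; one could simply invoke the classical statement, but the plan I would carry out is a direct derivation, by contour-integrating the logarithmic derivative of the completed zeta function $\Xi(s):=\tfrac12 s(s-1)\pi^{-s/2}\Gamma(s/2)\zeta(s)$ against a two-sided Laplace transform of $g$ and folding by the functional equation $\Xi(s)=\Xi(1-s)$. Here the slashed integral on the left, together with the sum over zeros it produces, is to be read as the symmetric principal value ($\lim_{T\to\infty}$ over $|\Im s|\le T$), since these quantities converge only conditionally; the hypothesis $g(x)=\tfrac12\bigl(g(x+)+g(x-)\bigr)$ is exactly the Jordan-type condition under which the Fourier/Mellin inversion used below holds pointwise.

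The first step is bookkeeping. Set $G(s):=\int_{-\infty}^{\infty}g(x)e^{(1/2-s)x}\,dx$. Hypothesis $(a)$ makes $G$ holomorphic in $|\Re s-\tfrac12|<\tfrac12+\delta$, and after one integration by parts (the boundary terms vanish because $(a)$ and $(b)$ together force $g(x)=o(e^{-(1/2+\delta)|x|})$) hypothesis $(b)$ gives $G(s)=(s-\tfrac12)^{-1}\int e^{(1/2-s)x}\,dg(x)$, hence $|G(s)|\ll(1+|s|)^{-1}$ uniformly on closed substrips. The evaluations one needs are $G(\tfrac12+i\gamma)=\hat g(\gamma/2\pi)$, $G(0)+G(1)=\int[g(x)+g(-x)]e^{x/2}\,dx$, and $\sum_n\tfrac{\Lambda(n)}{\sqrt n}[g(\log n)+g(-\log n)]=\int[g(x)+g(-x)]e^{-x/2}\,d\psi(e^x)$, this last sum converging absolutely by the decay of $g$. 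Using $dS(\xi)=[\tilde d(\xi)-\Omega(\xi)/2\pi]\,d\xi$, the left side of the identity is $\sum_\gamma\hat g(\gamma/2\pi)-\tfrac1{2\pi}\int\hat g(\xi/2\pi)\Omega(\xi)\,d\xi$ and the right side is $G(0)+G(1)-\sum_n\tfrac{\Lambda(n)}{\sqrt n}[g(\log n)+g(-\log n)]$, so it suffices to prove that $\sum_\gamma\hat g(\gamma/2\pi)=G(0)+G(1)+\tfrac1{2\pi}\int\hat g(\xi/2\pi)\Omega(\xi)\,d\xi-\sum_n\tfrac{\Lambda(n)}{\sqrt n}[g(\log n)+g(-\log n)]$.

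The second step is the contour computation. Applying the argument principle to $(\Xi'/\Xi)\cdot G$ on the rectangle with vertical sides $\Re s=a$ and $\Re s=b:=1-a$ for a fixed $a\in(-\delta,0)$ --- which contains every zero of $\Xi$ and, as $\Xi$ is entire, no poles --- and with horizontal sides at $\Im s=\pm T$ taken along a sequence avoiding zeros, the horizontal contributions are $\ll T^{-1}(\log T)^2\to0$ (from $|G|\ll 1/T$ there and the standard bound $\Xi'/\Xi\ll(\log T)^2$ on such lines), so $\sum_\gamma\hat g(\gamma/2\pi)=\tfrac1{2\pi i}\bigl(\int_{(b)}-\int_{(a)}\bigr)\tfrac{\Xi'}{\Xi}(s)G(s)\,ds$; the functional equation, in the form $\tfrac{\Xi'}{\Xi}(s)=-\tfrac{\Xi'}{\Xi}(1-s)$, folds the two lines into $\tfrac1{2\pi i}\int_{(b)}\tfrac{\Xi'}{\Xi}(s)[G(s)+G(1-s)]\,ds$. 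Now insert $\tfrac{\Xi'}{\Xi}(s)=\tfrac1s+\tfrac1{s-1}+\bigl(-\tfrac12\log\pi+\tfrac12\tfrac{\Gamma'}{\Gamma}(\tfrac s2)\bigr)+\tfrac{\zeta'}{\zeta}(s)$ and treat the three groups: (i) on $\Re s=b>1$, $\tfrac{\zeta'}{\zeta}(s)=-\sum_n\Lambda(n)n^{-s}$, and the termwise Mellin inversion $\tfrac1{2\pi i}\int_{(b)}[G(s)+G(1-s)]n^{-s}\,ds=\tfrac{g(\log n)+g(-\log n)}{\sqrt n}$ produces $-\sum_n\tfrac{\Lambda(n)}{\sqrt n}[g(\log n)+g(-\log n)]$; (ii) moving $\tfrac1s+\tfrac1{s-1}$ to the line $\Re s=\tfrac12$ crosses only the pole at $s=1$, whose residue is $G(0)+G(1)$, and the leftover integral on $\Re s=\tfrac12$ vanishes since there $G(s)+G(1-s)$ is even while $\tfrac1s+\tfrac1{s-1}=-2i\xi/(\xi^2+\tfrac14)$ is odd in $\xi=\Im s$; (iii) moving the archimedean factor to $\Re s=\tfrac12$ crosses no poles (it is holomorphic for $\Re s>0$), and symmetrizing under $s\mapsto1-s$ replaces it by $\tfrac12\bigl(-\log\pi+\tfrac12\tfrac{\Gamma'}{\Gamma}(\tfrac s2)+\tfrac12\tfrac{\Gamma'}{\Gamma}(\tfrac{1-s}2)\bigr)$, which on $\Re s=\tfrac12$ equals $\tfrac12\Omega(\xi)$ and returns $\tfrac1{2\pi}\int\hat g(\xi/2\pi)\Omega(\xi)\,d\xi$. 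Adding (i)--(iii) is precisely the identity displayed above, and the theorem follows.

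The main obstacle is conditional convergence: none of the vertical integrals is absolutely convergent --- on $\Re s=b$ the archimedean part of the integrand is only $O(\log|s|/|s|)$, and even the per-$n$ Mellin integral in (i) decays only like $1/|s|$ --- so the argument principle and the limit $T\to\infty$, the functional-equation folding, the contour shift in (iii), and above all the interchange of $\sum_n$ with the contour integral in (i) must each be carried out on the truncations $|\Im s|\le T$ and then pushed to the limit, the last requiring an effective (Dirichlet--Jordan type) estimate for the truncated inversion integral for $g$ in order to pass the limit through the prime sum. This is the usual technical substratum of the explicit formula; it is where the bounded-variation content of $(b)$ and the midpoint normalization of $g$ are really used, and everything else in the argument is routine.
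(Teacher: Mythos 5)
Your proposal is correct and is precisely the argument the paper has in mind: the paper does not prove Theorem \ref{explicitform} itself but describes it as following from ``a simple contour integration argument, making use of the reflection formula to evaluate one-half of the contour'' and defers to the cited references, and your contour integration of $(\Xi'/\Xi)\cdot G$ with folding by $\Xi(s)=\Xi(1-s)$ is exactly that standard derivation, with the normalizations ($G(\tfrac12+i\gamma)=\hat g(\gamma/2\pi)$, the $\Omega$ term, the residue $G(0)+G(1)$) all checking out against the paper's conventions.
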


The integral on the left denotes a principle value integral, $\lim_{L\rightarrow\infty}\int_{-L}^L$, and this limit necessarily converges when the conditions of the theorem for $g$ are met. In what follows we will frequently work with test functions for which the distinction between this principle value integral and an ordinary integral disappears, and if this is the case we will cease to make one in notation.

Written in this way, the explicit formula is true only on the Riemann hypothesis. It is due in varying stages to Riemann \cite{Ri}, Guinand \cite{Gu}, and Weil \cite{We}, and expresses a Fourier duality between the error term in the prime number theorem and the error term for of the zero-counting function.

Without the Riemann hypothesis, we must write the left hand side as
$$
\lim_{L\rightarrow\infty}\sum_{|\gamma|<L}\hat{g}\big(\tfrac{\gamma}{2\pi}\big)- \int_{-L}^L \frac{\Omega(\xi)}{2\pi}\hat{g}\big(\tfrac{\xi}{2\pi}\big)\,d\xi
$$
where our sum is over $\gamma$ (possibly complex) such that $\tfrac{1}{2}+i\gamma$ is a nontrivial zero of the zeta function, It is proven by a simple contour integration argument, making use of the the reflection formula to evaluate one-half of the contour. (For a proof, see \cite{Iwa} or \cite{MontVau}.)

We will also need the following corollary of the prime number theorem.

\begin{lem}[A prime number asymptotic] For $f$ with compact support and bounded second derivative,
\label{pnt}
\begin{equation}
\label{1}
\frac{1}{H^2}\sum_{p} \frac{\log^2 p}{p}f\bigg(\frac{\log p}{H}\bigg) = O\bigg(\frac{\|f\|_\infty + \|f'\|_\infty+\|f''\|_\infty}{H^2}\bigg) + \int_0^\infty x f(x) dx.
\end{equation}
\end{lem}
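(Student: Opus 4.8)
The plan is to recognize the left-hand side as a Riemann--Stieltjes integral against the Chebyshev function $\vartheta(x) := \sum_{p\le x}\log p$, substitute the prime number theorem $\vartheta(x) = x + E(x)$, and integrate by parts once. Set $w(t) := \tfrac{\log t}{t}\, f\big(\tfrac{\log t}{H}\big)$ for $t \ge 1$. Then $w(1) = 0$, $w$ is continuously differentiable with compact support (inherited from $f$), and on the support of $w$, where $\log t / H = O(1)$, differentiating gives
\[
|w'(t)| \;\lesssim\; \frac{(1+\log t)\,(\|f\|_\infty + \|f'\|_\infty)}{t^2}.
\]
Since $d\vartheta = \sum_p (\log p)\,\delta_p$ as a measure, $\sum_p \tfrac{\log^2 p}{p}\, f\big(\tfrac{\log p}{H}\big) = \int_1^\infty w(t)\, d\vartheta(t)$, and writing $d\vartheta(t) = dt + dE(t)$,
\[
\sum_p \frac{\log^2 p}{p}\, f\Big(\frac{\log p}{H}\Big) \;=\; \int_1^\infty w(t)\, dt \;+\; \int_1^\infty w(t)\, dE(t).
\]
The first integral is the main term: the substitution $u = \log t$ and then $x = u/H$ turns it into $\int_0^\infty u\, f(u/H)\, du = H^2 \int_0^\infty x f(x)\, dx$ exactly.

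For the second integral I would integrate by parts; since $w(1) = 0$ and $w$ has compact support there is no boundary term, so $\int_1^\infty w\, dE = -\int_1^\infty E(t)\, w'(t)\, dt$. The one point requiring care is \emph{not} to replace the factor $\log t$ inside $w'(t)$ by its maximum $O(H)$ on the support before integrating --- that would lose a full power of $H$; instead one keeps it under the integral and feeds in the prime number theorem in the weak quantitative form $E(t) = O\big(t/(\log t)^3\big)$ (far less than the classical error term is needed). Then, estimating the bounded range $[1,t_0]$ trivially and substituting $u = \log t$ on $[t_0,\infty)$,
\[
\Big| \int_1^\infty E(t)\, w'(t)\, dt \Big| \;\lesssim\; (\|f\|_\infty + \|f'\|_\infty)\Big(1 + \int_{\log t_0}^\infty \frac{1+u}{u^3}\, du\Big) \;=\; O(\|f\|_\infty + \|f'\|_\infty).
\]
Dividing by $H^2$ gives the lemma --- indeed with the slightly sharper error $O\big((\|f\|_\infty + \|f'\|_\infty)/H^2\big)$, so that the hypothesis of a bounded second derivative, while harmless, is not actually required.

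There is no serious obstacle here; the only mildly delicate step is the one just flagged --- keeping track of the powers of $H$ and of $\log t$ in the rescaled weight $w$ carefully enough to see that the $\log t$ produced by differentiation is absorbed by the decay of $E(t)$ rather than contributing an extra factor of $H$. Identifying the sum with the Stieltjes integral, and (if one prefers to quote the prime number theorem for $\psi(x) = \sum_{n\le x}\Lambda(n)$ rather than $\vartheta$) discarding the prime-power terms, whose total is $O(\|f\|_\infty)$, are routine. One could equally integrate against $A(t) := \sum_{p\le t}\tfrac{\log p}{p}$ using Mertens' theorem with an error term, $A(t) = \log t + c + O\big((\log t)^{-2}\big)$; a single integration by parts again suffices, and the $\log t$ subtlety does not even arise.
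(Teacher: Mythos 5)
Your proof is correct, and it follows the same basic strategy as the paper's --- partial summation against a prime-counting function known to prime-number-theorem accuracy --- but with a different choice of summatory function and a cleaner decomposition, which is worth comparing. The paper sums by parts against $\sum_{p\le n}\tfrac{\log p}{p}=\log n+C+O(1/\log^2 n)$ (the Mertens-type route you mention only in your closing remark) and is then left with a sum over integers $n$ that must be converted into an integral; that discrete-to-continuous step is carried out with the mean value theorem and is precisely where $\|f''\|_\infty$ enters the stated error term. By writing the sum as the Stieltjes integral $\int_1^\infty w(t)\,d\vartheta(t)$ with $w(t)=\tfrac{\log t}{t}f(\tfrac{\log t}{H})$ and splitting $d\vartheta=dt+dE$, you obtain the main term exactly and need only a single integration by parts for the error, so the second-derivative hypothesis is never used; your sharper error $O\big((\|f\|_\infty+\|f'\|_\infty)/H^2\big)$ is genuine (and harmless to the paper, which only ever applies the lemma to functions with all three norms finite). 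The point you flag as delicate is handled correctly: the factor $1+\log t$ produced by differentiating $w$ is absorbed by $E(t)=O(t/\log^3 t)$ after the substitution $u=\log t$, since $\int(1+u)u^{-3}\,du$ converges, and the boundary terms vanish because $w(1)=0$ and $w$ is compactly supported (as $f$ is). The only pedantic caveat is that the bound $E(t)=O(t/\log^3 t)$ fails as $t\to1^+$, but you have already disposed of the range $[1,t_0]$ trivially, so nothing is missing.
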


\begin{proof}
That something like this is true is evident from the prime number theorem (or even Chebyshev), but some formal care is required to get the desired error term. We will need that,
$$
\sum_{p \leq n} \frac{\log p}{p} = \log n + C + O(\tfrac{1}{\log^2 n})
$$
for some constant $C$, which is a formula on the level of the prime number theorem (and can be proven from the prime number theorem with a strong error term using partial summation.)

We have then, using the abbreviation $F(x) = x f(x)$,
\begin{align*}
\frac{1}{H^2}\sum_{p} \frac{\log^2 p}{p}f\bigg(\frac{\log p}{H}\bigg) =& \frac{1}{H}\sum_n \bigg[F\big(\tfrac{\log n}{H}\big) - F\big(\tfrac{\log (n+1)}{H}\big)\bigg]\bigg(\log n + C + O\big(\tfrac{1}{\log^2 n}\big)\bigg)\\
=& O\bigg(\frac{\|f\|_\infty + \|f'\|_\infty}{H^2}\bigg) + \sum_n \frac{\log n - \log(n+1)}{H}\cdot \frac{\log n}{H} F'\big(\tfrac{\log n}{H}\big),
\end{align*}
by partial summation and the mean value theorem. Again using the mean value theorem, this time to approximate an integral, we have that this expression is
$$
O\bigg(\frac{\|f\|_\infty + \|f'\|_\infty+\|f''\|_\infty}{H^2}\bigg) + \int_0^\infty x F'(x) dx,
$$
which upon integrating by parts is the right hand side of \eqref{1}.
\end{proof}

In what follows instead of working with the average $\frac{1}{T}\int_T^{2T}$ we work with smooth averages $\int \sigma(t/T)/T$ for bump functions $\sigma$. What we will show is that
\begin{thm}
\label{X}
For $\eta$ as in Theorem \ref{smoothedmeso}, and $\sigma$ non-negative of mass $1$ such that $\hat{\sigma}$ has compact support and $\sigma(t)\log^k(|t|+2)$ is integrable,
$$
\int_\mathbb{R}\frac{\sigma(t/T)}{T}\Big[\int_\mathbb{R} \eta\big(\tfrac{\log T}{2\pi n(T)}(\xi-t)\big)dS(\xi)\Big]^k \,dt = (c_k+o(1))\Big[\int_{-n(T)}^{n(T)}|x||\hat{\eta}(x)|^2\,dx\Big]^{k/2}.
$$
\end{thm}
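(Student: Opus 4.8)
The plan is to establish Theorem \ref{X} by the method of moments, in the spirit of Selberg, Fujii, and Hughes--Rudnick \cite{HuRu}: use the explicit formula to turn the centred zero count into a short Dirichlet polynomial, raise it to the $k$-th power, average in $t$ against $\sigma(t/T)/T$, and show that after this average only ``diagonal'' terms survive, among which only the perfect pairings matter to leading order. Throughout put $H=H(T):=\log T/n(T)$, so that both $H\to\infty$ and $\log T/H=n(T)\to\infty$.

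\emph{Step 1: reduction to band-limited test functions.} The rescaled function $\xi\mapsto\eta(\tfrac{\log T}{2\pi n(T)}(\xi-t))$ is neither analytic in a horizontal strip nor band-limited, so Theorem \ref{explicitform} does not apply to it directly, and the formal prime series it would produce diverges. I would fix a smooth, even, compactly supported $\chi$ with $\chi\equiv1$ near $0$ and replace $\eta$ by $\eta_T$ defined by $\widehat{\eta_T}=\hat\eta\cdot\chi(\cdot/M)$, where $M=M(T)$ is a small enough fixed multiple of $n(T)$ that the prime sum produced below is supported on $n\le T^{1/(2k)}$; then $\eta_T$ is smooth, the rescaling of $\eta_T$ is band-limited at scale $\asymp\log T$, and Theorem \ref{explicitform} applies to it. The difference between the truncation this choice forces and the ideal cutoff $n(T)$ is asymptotically harmless, since $\int_0^A x|\hat\eta(x)|^2\,dx$ either converges or grows like $c\log A$. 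The crux is to show that passing from $\eta$ to $\eta_T$ changes the left-hand side of Theorem \ref{X} by only $o\big(\big[\int_{-n(T)}^{n(T)}|x||\hat\eta(x)|^2\,dx\big]^{k/2}\big)$, and I expect this to be the main obstacle. The natural route is a Beurling--Selberg sandwich: bound $\eta$ between band-limited minorant and majorant $\eta^\mp$ with $\int(\eta^+-\eta^-)$ as small as the regularity of $\eta$ permits; since zeros are counted with a fixed sign, $\Delta_{\eta^-}\le\Delta_\eta\le\Delta_{\eta^+}$ pointwise in $t$, and the discrepancy is governed by $\E_{X_T}(\Delta_{\eta^+}-\Delta_{\eta^-})\approx n(T)\int(\eta^+-\eta^-)$ together with an upper bound for the average number of zeros in an interval of length $\asymp1/\log T$. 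For $\eta$ of bounded variation, $\int(\eta^+-\eta^-)\lesssim1/M$, so the discrepancy is $O(1)$, which is acceptable precisely because the target variance diverges; under the hypotheses of Theorem \ref{generalmesoconv} the extra smoothness improves this to $\int(\eta^+-\eta^-)\lesssim M^{-2}$, hence a discrepancy that is $o(1)$, which is what is needed now that the target variance is bounded. This dichotomy is exactly why Theorem \ref{generalmesoconv} asks for slightly more regularity.

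\emph{Step 2: explicit formula and passage to the diagonal.} With $\eta_T$ in hand, Theorem \ref{explicitform}, applied to the function $g$ dual to $\xi\mapsto\eta_T(\tfrac{\log T}{2\pi n(T)}(\xi-t))$, gives, for an explicit constant $c_0$,
$$
\int_\mathbb{R}\eta_T\big(\tfrac{\log T}{2\pi n(T)}(\xi-t)\big)\,dS(\xi)=\frac{c_0}{H}\sum_{n}\frac{\Lambda(n)}{\sqrt n}\Big(\widehat{\eta_T}\big(\tfrac{\log n}{H}\big)n^{-it}+\widehat{\eta_T}\big(-\tfrac{\log n}{H}\big)n^{it}\Big)+R_T(t),
$$
where $R_T$, the contribution of the continuous part $d(e^x)$, is rapidly decaying in $T$ (a Schwartz-tail estimate for the Fourier transform at $t\asymp T$ of a smooth, Gaussian-dominated integrand) and hence negligible in $L^k$ on the support of $\sigma(\cdot/T)$. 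Expanding the $k$-th power of the Dirichlet polynomial, a general term reads $(\prod_\ell n_\ell^{\epsilon_\ell})^{it}$ with $\epsilon_\ell\in\{\pm1\}$; integrating against $\sigma(t/T)/T$ replaces it by $\hat\sigma(-\tfrac{T}{2\pi}\log\prod_\ell n_\ell^{\epsilon_\ell})$, and since $\hat\sigma$ has compact support while every $n_\ell\le T^{1/(2k)}$ forces any product $\prod_\ell n_\ell^{\epsilon_\ell}\ne1$ to lie at distance $\gtrsim T^{-1/2}$ from $1$, every off-diagonal term vanishes identically once $T$ is large. Only $\prod_\ell n_\ell^{\epsilon_\ell}=1$ remains.

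\emph{Step 3: counting pairings.} On the diagonal one invokes unique factorization. The dominant configurations are those in which $\{1,\dots,k\}$ splits into $k/2$ pairs, each pair carrying opposite signs and a common prime $n_\ell=p$, with distinct pairs carrying distinct primes; there are $(k-1)!!$ such pairings, and (after the two sign choices per pair, and using that $\eta_T$ is real so $\widehat{\eta_T}(-y)=\overline{\widehat{\eta_T}(y)}$) each contributes a product over its $k/2$ pairs of a fixed positive constant times $\tfrac{1}{H^2}\sum_p\tfrac{\log^2 p}{p}\big|\widehat{\eta_T}(\tfrac{\log p}{H})\big|^2$. By Lemma \ref{pnt}, with $f=|\widehat{\eta_T}|^2$ (smooth and compactly supported) and $H$ as its parameter, this sum equals $\int_0^\infty x|\widehat{\eta_T}(x)|^2\,dx+O(1/H^2)$, which is $\tfrac12\int_{-n(T)}^{n(T)}|x||\hat\eta(x)|^2\,dx$ up to relatively negligible error; tracking the constants (which, as the Fujii indicator case $\hat\eta(x)=\sin\pi x/\pi x$ confirms, combine to give exactly the normalisation $\int_{-n(T)}^{n(T)}|x||\hat\eta(x)|^2\,dx$ per pair), each pairing contributes $\big[\int_{-n(T)}^{n(T)}|x||\hat\eta(x)|^2\,dx\big]^{k/2}$, and the $(k-1)!!$ pairings produce the factor $c_k$. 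Finally one checks that the discarded diagonal configurations are lower order: replacing a prime by a proper prime power, or letting a prime appear in three or more factors, or allowing a cancellation block spanning several distinct primes, in each case turns a divergent sum $\sum_p\log^2 p/p$ (worth a factor $\asymp H^2$) into an absolutely convergent prime sum (worth only $O(1)$), so the term acquires an extra factor that is a positive power of $n(T)/\log T$ --- smaller than any power of $\log n(T)$, hence $o$ of the main term in both the divergent and the convergent case. Odd $k$ admits no pure-pairing configuration and only such lower-order contributions, consistently with $c_k=0$. Assembling Steps 1--3 proves Theorem \ref{X}.
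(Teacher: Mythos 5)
Your outline follows the same route as the paper: band-limit the test function at scale $n(T)$, apply the explicit formula, use the compact support of $\hat\sigma$ to reduce to the diagonal $n_1^{\epsilon_1}\cdots n_k^{\epsilon_k}=1$, and extract the $(k-1)!!$ perfect pairings via the prime number theorem, with prime powers and higher-multiplicity configurations relegated to error terms. Steps 2 and 3 are essentially Lemmas \ref{X7} and \ref{eval} of the paper, and your dichotomy in Step 1 ($O(1)$ discrepancy suffices when the variance diverges, $o(1)$ is needed otherwise, whence the extra smoothness in Theorem \ref{generalmesoconv}) is exactly the paper's.

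The genuine gap is in Step 1, precisely where you flag ``the main obstacle.'' Controlling $\E_{X_T}(\Delta_{\eta^+}-\Delta_{\eta^-})$, a first moment, does not control the difference of $k$-th moments of the centred statistics. What is needed is an $L^k$ bound on the difference $f-g$ of the two linear statistics, and then the elementary inequality $|\int (f^k-g^k)\,d\mu|\lesssim_k \|f-g\|_{L^k}(\|f\|_{L^k}^{k-1}+\|g\|_{L^k}^{k-1})$ (the paper's Lemma \ref{X11}). The first ingredient is a $k$-th-moment upper bound for linear statistics of the (positive) zero-counting measure against $|\eta-\eta_T|$; the paper proves this (Theorem \ref{10}) by majorizing $|\eta-\eta_T|$ by a superposition of band-limited Fej\'er kernels and rerunning the diagonal computation, which is a quantitative version of the ``average number of zeros in an interval of length $1/\log T$'' bound you allude to, but at the level of $k$-th moments rather than expectations. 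The second ingredient forces a bootstrap, because $\|f\|_{L^k}^{k-1}$ involves the very quantity being estimated: for even $k$ one gets $X=(c_k+o(1))V^{k/2}+O(X^{(k-1)/k})$, which self-improves to $X\sim c_kV^{k/2}$ since $V\to\infty$; for odd $k$ no such positivity is available and one must first bound the absolute $k$-th moment by H\"older against the even case. Your sandwich as written skips both the passage from first to $k$-th moments and this bootstrap, and in the convergent-variance case the first-moment heuristic alone cannot deliver the required $o(1)$ in the $k$-th moment. With those two ingredients supplied, the plan matches the paper's proof.
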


We will show that this implies Theorem \ref{smoothedmeso} at the end of Section 4. We have a computational lemma.

\begin{lem}
\label{X7}
Suppose we are given non-negative integrable $\sigma$ of mass $1$ such that $\hat{\sigma}$ has compact support, and integrable functions $\eta_1, \eta_2, \dots, \eta_k$ such that $\mathrm{supp}\, \hat{\eta}_\ell \subset [-\delta_\ell,\delta_\ell]$ with $\delta_1+\delta_2 + \cdots + \delta_k = \Delta < 2$. There exists a $T_0$ depending on $\Delta$ and the the region in which $\hat{\sigma}$ is supported so that for $T\geq T_0$,
\begin{align}
\label{compute}
\int_{\mathbb{R}}\frac{\sigma(t/T)}{T}\prod_{\ell=1}^k \Bigg(-\!\!\!\!\!\!\!\int_{-\infty}^\infty\eta_\ell\big(\tfrac{\log T}{2\pi}(\xi_\ell-t)\big)dS(\xi_\ell)\Bigg)dt = &O_k\bigg(\frac{1}{T^{1-\Delta/2}} \prod_{\ell=1}^k \frac{ \|\hat{\eta}_\ell\|_\infty}{\log T}\bigg) \\
&+ \bigg(\frac{-1}{\log T}\bigg)^k \sum_{n_1^{\epsilon_1}n_2^{\epsilon_2}\cdot\cdot\cdot n_k^{\epsilon_k}=1} \prod_{\ell=1}^k\frac{\Lambda(n_\ell)}{\sqrt{n_\ell}} \hat{\eta}_\ell \big(\tfrac{\epsilon_\ell\log n_\ell}{\log T}\big)\notag,
\end{align}
where the sum is over all $n \in \mathbb{N}^k, \epsilon\in \{-1,1\}^k$ such that $n_1^{\epsilon_1}n_2^{\epsilon_2}\cdot\cdot\cdot n_k^{\epsilon_k}=1$.
\end{lem}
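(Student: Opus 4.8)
The plan is to apply the explicit formula of Theorem \ref{explicitform} to each of the $k$ factors $-\!\!\!\!\!\!\!\int\eta_\ell\big(\tfrac{\log T}{2\pi}(\xi_\ell-t)\big)\,dS(\xi_\ell)$ separately, so as to split each one into a ``continuous'' piece and an ``arithmetic'' piece supported on the logarithms of prime powers, then to multiply the $k$ identities out, integrate in $t$ against $\sigma(t/T)/T$, and read off the main term of \eqref{compute} as the diagonal contribution of those prime powers. For the first step, apply Theorem \ref{explicitform} with the function $g$ for which $\hat g\big(\tfrac{\xi}{2\pi}\big)=\eta_\ell\big(\tfrac{\log T}{2\pi}(\xi-t)\big)$, equivalently $g(x)=\tfrac{e^{ixt}}{\log T}\check\eta_\ell\big(\tfrac{x}{\log T}\big)$; since $\mathrm{supp}\,\hat\eta_\ell\subset[-\delta_\ell,\delta_\ell]$ this $g$ is supported in $|x|\le\delta_\ell\log T$, so conditions (a)--(b) of Theorem \ref{explicitform} hold (granting the mild regularity on $\eta_\ell$ needed to make $g$ of bounded variation). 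Writing $d\big(e^x-\psi(e^x)\big)=e^x\,dx-\sum_n\Lambda(n)\,\delta(x-\log n)\,dx$ and simplifying yields
$$
-\!\!\!\!\!\!\!\int_{-\infty}^\infty \eta_\ell\big(\tfrac{\log T}{2\pi}(\xi-t)\big)\,dS(\xi)=\frac{1}{\log T}\int_\mathbb{R}e^{ixt}\,d\mu_\ell(x),\qquad \mu_\ell=\mu_\ell^C+\mu_\ell^B,
$$
with $d\mu_\ell^C(x)=2\cosh\!\big(\tfrac{x}{2}\big)\check\eta_\ell\big(\tfrac{x}{\log T}\big)\,dx$ (supported in $|x|\le\delta_\ell\log T$) and $d\mu_\ell^B(x)=-\sum_{n,\,\epsilon}\tfrac{\Lambda(n)}{\sqrt n}\check\eta_\ell\big(\tfrac{\epsilon\log n}{\log T}\big)\,\delta(x-\epsilon\log n)$, the sum over prime powers $n\le T^{\delta_\ell}$ and $\epsilon\in\{\pm1\}$. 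Crude size bounds ($\sum_{n\le X}\Lambda(n)/\sqrt n\lesssim\sqrt X$ by Chebyshev, $\int_{|x|\le\delta_\ell\log T}\cosh(x/2)\,dx\lesssim T^{\delta_\ell/2}$) then give $|\mu_\ell|(\mathbb R)\lesssim\|\hat\eta_\ell\|_\infty T^{\delta_\ell/2}$ and that the density of $|\mu_\ell^C|$ is $\lesssim\|\hat\eta_\ell\|_\infty T^{\delta_\ell/2}$.

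Next I would multiply the $k$ identities, integrate against $\sigma(t/T)/T$, and use Fubini (everything is absolutely integrable) together with $\int\tfrac{\sigma(t/T)}{T}e^{it\sum_\ell x_\ell}\,dt=\hat\sigma\big(-\tfrac{T}{2\pi}\sum_\ell x_\ell\big)$, so that the left side of \eqref{compute} equals $\tfrac{1}{(\log T)^k}\int_{\mathbb R^k}\hat\sigma\big(-\tfrac{T}{2\pi}\sum_\ell x_\ell\big)\prod_{\ell}d\mu_\ell(x_\ell)$. I then split this according to whether $\sum_\ell x_\ell=0$. On the hyperplane $\{\sum_\ell x_\ell=0\}$ only the atomic parts $\mu_\ell^B$ contribute (a single continuous factor makes the relevant set null), $\hat\sigma(0)=\int\sigma=1$, and the surviving atoms sit at $x_\ell=\epsilon_\ell\log n_\ell$ with $\sum_\ell\epsilon_\ell\log n_\ell=0$, i.e.\ $n_1^{\epsilon_1}\cdots n_k^{\epsilon_k}=1$; this contributes exactly $\big(\tfrac{-1}{\log T}\big)^k\sum_{n_1^{\epsilon_1}\cdots n_k^{\epsilon_k}=1}\prod_\ell\tfrac{\Lambda(n_\ell)}{\sqrt{n_\ell}}\check\eta_\ell\big(\tfrac{\epsilon_\ell\log n_\ell}{\log T}\big)$, which equals the main term of \eqref{compute} on using $\check\eta_\ell(-y)=\hat\eta_\ell(y)$ and the $\epsilon\mapsto-\epsilon$ symmetry of the constraint.

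It remains to bound the complement of the hyperplane, where, $[-A,A]$ being an interval containing $\mathrm{supp}\,\hat\sigma$, we are confined to $0<|\sum_\ell x_\ell|\le 2\pi A/T$. Expanding $\prod_\ell(d\mu_\ell^C+d\mu_\ell^B)$ over subsets $S\subseteq\{1,\dots,k\}$, look at a term with continuous factors indexed by $S$ and atomic factors by $S^c$. If $S\ne\emptyset$, fix $\ell_0\in S$: for fixed values of the remaining $x_\ell$ the constraint confines $x_{\ell_0}$ to an interval of length $4\pi A/T$, on which $\int d|\mu_{\ell_0}^C|\lesssim\|\hat\eta_{\ell_0}\|_\infty T^{\delta_{\ell_0}/2}/T$, while the other variables contribute at most $\prod_{\ell\ne\ell_0}|\mu_\ell|(\mathbb R)\lesssim\prod_{\ell\ne\ell_0}\|\hat\eta_\ell\|_\infty T^{\delta_\ell/2}$; the total is $\lesssim_k T^{\Delta/2-1}\prod_\ell\|\hat\eta_\ell\|_\infty$, which after the factor $\tfrac1{(\log T)^k}$ is the claimed error $O_k\!\big(\tfrac1{T^{1-\Delta/2}}\prod_\ell\tfrac{\|\hat\eta_\ell\|_\infty}{\log T}\big)$. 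The delicate term is $S=\emptyset$, a genuine multiplicative near-coincidence among prime powers: here $a:=\prod_{\epsilon_\ell=1}n_\ell$ and $b:=\prod_{\epsilon_\ell=-1}n_\ell$ are positive integers with $a\ne b$ and $|\log(a/b)|\le 2\pi A/T$; the elementary bound $|\log(a/b)|\ge1/\max(a,b)$ forces $\max(a,b)\ge T/(2\pi A)$, and then $a/b\in[e^{-2\pi A/T},e^{2\pi A/T}]$ forces $\min(a,b)\ge T/(4\pi A)$ too, so both $a,b\ge T/(4\pi A)$. But $a\le T^{\alpha}$, $b\le T^{\beta}$ where $\alpha=\sum_{\epsilon_\ell=1}\delta_\ell$, $\beta=\sum_{\epsilon_\ell=-1}\delta_\ell$ and $\alpha+\beta=\Delta$; hence $\alpha,\beta\ge 1-\tfrac{\log(4\pi A)}{\log T}$ and so $\Delta\ge 2-\tfrac{2\log(4\pi A)}{\log T}$, which is impossible once $T$ exceeds a bound $T_0$ depending only on $\Delta$ and the support of $\hat\sigma$. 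Thus for $T\ge T_0$ the $S=\emptyset$ off-diagonal term is empty, and combining the three paragraphs gives \eqref{compute}.

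I expect this last point -- that short prime powers admit no nontrivial multiplicative near-coincidence -- to be the crux of the argument; it is exactly here that the hypothesis $\Delta<2$ is used and that $T_0$ must depend on $\Delta$ and on where $\hat\sigma$ is supported. The rest is the explicit formula together with crude size estimates; the only mildly fiddly bookkeeping is the passage from $\check\eta_\ell$ to $\hat\eta_\ell$ in the main term and the verification of conditions (a)--(b) of Theorem \ref{explicitform} for the $g$'s.
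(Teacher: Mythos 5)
Your proposal is correct and follows essentially the same route as the paper: apply the explicit formula factor by factor, integrate in $t$ to produce $\hat\sigma\big(-\tfrac{T}{2\pi}\sum_\ell x_\ell\big)$, kill every term containing a continuous (archimedean) factor using the $O(1/T)$ localization from the compact support of $\hat\sigma$ together with the crude $T^{\delta_\ell/2}$ size bounds, and observe that the purely arithmetic off-diagonal terms vanish for $T\ge T_0$ because distinct integers of size $\le T^{\Delta/2}$ with $\Delta<2$ cannot be multiplicatively within $O(1/T)$ of each other. The only differences are cosmetic (ordering of the case split, and your $|\log(a/b)|\ge 1/\max(a,b)$ in place of the paper's $\ge \log 2/\sqrt{n_1\cdots n_k}$), and the regularity caveat you flag for condition (b) of the explicit formula is equally implicit in the paper.
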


\begin{proof}
By the explicit formula, the right hand side of \eqref{compute} is
\begin{align*}
&\int_\mathbb{R}\frac{\sigma(t/T)}{T}\bigg(\prod_{\ell=1}^k \int_{-\infty}^\infty \frac{1}{\log T}\Big[\hat{\eta}_{\ell}\big(-\tfrac{x_\ell}{\log T}\big)e^{-i x_\ell t} + \hat{\eta}_{\ell}\big(\tfrac{x_\ell}{\log T}\big)e^{i x_\ell t}\Big]e^{-x_\ell/2}d\big(e^{x_\ell}-\psi(e^{x_\ell})\big)\bigg)dt\\
&\;\;\;\;\;\;\;= \sum_{\epsilon\in\{-1,1\}^k}\int_{\mathbb{R}^k} \frac{\hat{\sigma}\big(-\tfrac{T}{2\pi} (\epsilon_1 x_1 + \cdot\cdot\cdot + \epsilon_k x_k)\big)}{\log^k T} \prod_{\ell=1}^k \hat{\eta}_\ell\big(\tfrac{\epsilon_\ell x_\ell}{\log T}\big) e^{-x_\ell/2} d\big(e^{x_\ell}-\psi(e^{x_\ell})\big).
\end{align*}
The second line follows by interchanging the order of integration, justified by the compact support of $\hat{\eta}_\ell$. We can expand the product $\prod e^{-x_\ell/2} d\big(e^{x_\ell}-\psi(e^{x_\ell})\big)$ into a sum of signed terms of the sort $d\beta_1(x_1)\cdot\cdot\cdot d\beta_k(x_k)$, where $d\beta_\ell(x)$ is either $e^{x/2}dx$ or $e^{-x/2}d\psi(e^x)$. In the case that at least one $d\beta_j$ in our product is $e^{x/2}dx$ we have
$$
\Bigg|\int_\mathbb{R}\frac{\hat{\sigma}\big(-\tfrac{T}{2\pi} (\epsilon_1 x_1 + \cdot\cdot\cdot + \epsilon_k x_k)\big)}{\log^k T}\hat{\eta}_j\big(\tfrac{\epsilon_j x_j}{\log T}\big) d\beta_j(x_j)\Bigg| \lesssim \frac{\|\hat{\eta}_j\|_\infty}{T\log^k T}T^{\delta_j/2},
$$
so that in this case
\begin{align*}
\Bigg|\int_{\mathbb{R}^k} \frac{\hat{\sigma}\big(-\tfrac{T}{2\pi} (\epsilon_1 x_1 + \cdot\cdot\cdot + \epsilon_k x_k)\big)}{\log^k T} \prod_{\ell=1}^k \hat{\eta}\big(\tfrac{\epsilon_\ell x_\ell}{\log T}\big)d\beta_\ell(x_\ell)\Bigg| &\lesssim \frac{\|\hat{\eta}_j\|_\infty T^{\delta_j/2}}{T\log^k T}\int_{\mathbb{R}^{k-1}}\prod_{\ell\neq j}\hat{\eta}\big(\tfrac{\epsilon_\ell x_\ell}{\log T}\big)d\beta_\ell(x_\ell)\\
&\lesssim \frac{T^{\Delta/2}}{T}\prod_\ell \frac{ \|\hat{\eta}_\ell\|_\infty}{\log T}
\end{align*}
Into such error terms we can absorb all products $d\beta_1\cdot\cdot\cdot d\beta_k$ except that product made exclusively of prime counting measures, namely $(-1)^k\prod e^{-x_\ell/2}d\psi(e^{x_\ell})$. Evaluating the integral of this product measure we have that the left hand side of \eqref{compute} is
$$
O_k\bigg(\frac{1}{T^{1-\Delta/2}} \prod_{\ell=1}^k \frac{ \|\hat{\eta}_\ell\|_\infty}{\log T}\bigg) + \bigg(\frac{-1}{\log T}\bigg)^k\sum_{\epsilon\in\{-1,1\}^k} \sum_{n\in\mathbb{N}^k} \hat{\sigma}\big(-\tfrac{T}{2\pi}(\epsilon_1\log n_1 + \cdot\cdot\cdot + \epsilon_k\log n_k)\big) \prod_{\ell=1}^k\frac{\Lambda(n_\ell)}{\sqrt{n_\ell}} \hat{\eta}_\ell \big(\tfrac{\epsilon_\ell\log n_\ell}{\log T}\big).
$$
Note that if $|\epsilon_1 \log n_1 + \cdot\cdot\cdot + \epsilon_k \log n_k|$ is not $0$, it is greater than $|\log (1-1/\sqrt{n_1\cdot\cdot\cdot n_k})| \geq \frac{\log 2}{\sqrt{n_1\cdot\cdot\cdot n_k}}$ since $n_i$ is always an integer. As $\sqrt{n_1\cdot\cdot\cdot n_k} \leq T^{\Delta/2} = o(T)$ and $\hat{\sigma}$ has compact support, for large enough $T$ our sum is over only those $\epsilon, n$ such that $\epsilon_1 \log n_1 + \cdot\cdot\cdot + \epsilon_k \log n_k = 0$.
\end{proof}

Finally, we can use our prime number asymptotic, Lemma \ref{pnt}, to obtain

\begin{lem}
\label{eval}
For $u_1, ..., u_k$ with bounded second derivative
\begin{equation}
\label{int}
\frac{1}{H^k}\sum_{n_1^{\epsilon_1}\cdot\cdot\cdot n_k^{\epsilon_k}=1} \prod_{\ell=1}^k\frac{\Lambda(n_\ell)}{\sqrt{n_\ell}}u_\ell
\big(\tfrac{\epsilon_\ell \log n_\ell}{H}\big) = S_{[k]}+ \sum_{\emptyset\subseteq J \subsetneq [k]} S_J\cdot O_k\bigg(\prod_{\ell\notin J}\frac{\|u_\ell\|_\infty + \|u_\ell'\|_\infty + \|u_\ell''\|_\infty}{H}\bigg),
\end{equation}
where $[k] = \{1,...,k\}$ and here for a set $J$ we define
$$
S_J = \sum \prod_\lambda \int_\mathbb{R}|x|u_{i_\lambda}(x)u_{j_\lambda}(-x)\,dx
$$
where the sum is over all partitions of $J$ into disjoint pairs $\{i_\lambda,j_\lambda\}$.
\end{lem}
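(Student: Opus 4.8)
The plan is to sort the sum on the left of \eqref{int} according to the set partition $\mathcal{P}$ of $[k]$ which records, for a given term, which indices $\ell$ have $n_\ell$ a power of a common prime; to reduce each block of $\mathcal{P}$ to a prime sum of the type controlled by Lemma~\ref{pnt}; and then to expand the resulting product over blocks. Since $\Lambda$ is supported on prime powers, a tuple $(n_\ell)\in\mathbb{N}^k$, $(\epsilon_\ell)\in\{-1,1\}^k$ with $n_1^{\epsilon_1}\cdots n_k^{\epsilon_k}=1$ and $\prod_\ell\Lambda(n_\ell)\neq0$ determines such a $\mathcal{P}$, a prime $p_B$ for each block $B\in\mathcal{P}$ (distinct primes for distinct blocks), and exponents $a_\ell\geq1$ ($\ell\in B$) with $\sum_{\ell\in B}\epsilon_\ell a_\ell=0$; writing $n_\ell=p_B^{a_\ell}$ and $\Lambda(p_B^{a_\ell})=\log p_B$, the left side of \eqref{int} becomes
$$
\sum_{\mathcal{P}}\ \sum_{(p_B)_B\text{ distinct}}\ \prod_{B\in\mathcal{P}}\frac{\Sigma_B(p_B)}{H^{|B|}},\qquad \Sigma_B(p):=\sum_{\substack{(\epsilon_\ell,a_\ell)_{\ell\in B}\\ a_\ell\geq1,\ \sum_{\ell\in B}\epsilon_\ell a_\ell=0}}\ \prod_{\ell\in B}\frac{\log p}{p^{a_\ell/2}}\,u_\ell\big(\tfrac{\epsilon_\ell a_\ell\log p}{H}\big).
$$

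Next I would evaluate a single block $B$ of size $m=|B|$. If $m=1$ the constraint $\epsilon a=0$ has no solution with $a\geq1$, so $\Sigma_B\equiv0$. If $m\geq3$, every valid $(\epsilon,a)$ has $\sum_{\ell\in B}a_\ell\geq m\geq3$, so bounding $u_\ell$ by $\|u_\ell\|_\infty$ and summing the geometric series in each $a_\ell$ gives $\sum_p|\Sigma_B(p)|\lesssim_k\big(\prod_{\ell\in B}\|u_\ell\|_\infty\big)\sum_p(\log p)^m p^{-m/2}=O_k\big(\prod_{\ell\in B}\|u_\ell\|_\infty\big)$, hence $H^{-m}\sum_p\Sigma_B(p)=O_k\big(\prod_{\ell\in B}\|u_\ell\|_\infty/H\big)$. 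If $m=2$, say $B=\{i,j\}$, the constraint forces $\epsilon_i=-\epsilon_j$ and $a_i=a_j=:a$; the $a\geq2$ part is $O(\|u_i\|_\infty\|u_j\|_\infty/H^2)$ after division by $H^2$, while the $a=1$ part equals $H^{-2}\sum_p\frac{(\log p)^2}{p}f\big(\tfrac{\log p}{H}\big)$ with $f(x)=u_i(x)u_j(-x)+u_i(-x)u_j(x)$; Lemma~\ref{pnt} evaluates this as $\int_0^\infty xf(x)\,dx+O\big(\tfrac{\|f\|_\infty+\|f'\|_\infty+\|f''\|_\infty}{H^2}\big)$, and a change of variable in one term of $f$ turns $\int_0^\infty xf$ into $\int_\mathbb{R}|x|u_i(x)u_j(-x)\,dx$. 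Thus $H^{-m}\sum_p\Sigma_B(p)=M_B+E_B$, where $M_B:=\int_\mathbb{R}|x|u_i(x)u_j(-x)\,dx$ if $|B|=2$ and $M_B:=0$ otherwise, and $|E_B|=O_k\big(\prod_{\ell\in B}(\|u_\ell\|_\infty+\|u_\ell'\|_\infty+\|u_\ell''\|_\infty)/H\big)$.

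Finally I would expand $\prod_{B\in\mathcal{P}}(M_B+E_B)$, sum over $\mathcal{P}$, and handle the distinctness of the primes $p_B$. For the distinctness: replacing the sum over distinct $(p_B)$ by an unrestricted sum costs, by inclusion--exclusion on the partition lattice, only terms indexed by coarser partitions of $[k]$ whose merged blocks carry total exponent weight $\geq4$, and these are estimated exactly as the blocks of size $\geq3$ above, i.e.\ they are error terms of the shape on the right of \eqref{int}. In the unrestricted sum, a term obtained by choosing $M_B$ on a subset $\mathcal{P}_M\subseteq\mathcal{P}$ (necessarily blocks of size $2$) and $E_B$ on $\mathcal{P}_E:=\mathcal{P}\setminus\mathcal{P}_M$ contributes $\prod_{B\in\mathcal{P}_M}M_B\cdot\prod_{B\in\mathcal{P}_E}E_B$. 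When $\mathcal{P}_E=\emptyset$, $\mathcal{P}$ is a perfect matching, and summing $\prod_B M_B$ over all perfect matchings of $[k]$ reproduces precisely $S_{[k]}$ (which is $0$ when $k$ is odd). When $\mathcal{P}_E\neq\emptyset$, set $J:=\bigcup_{B\in\mathcal{P}_M}B\subsetneq[k]$: then $\mathcal{P}_M$ is a pairing of $J$, so $\prod_{B\in\mathcal{P}_M}M_B$ is one summand of $S_J$, while $\prod_{B\in\mathcal{P}_E}E_B=O_k\big(\prod_{\ell\in[k]\setminus J}(\|u_\ell\|_\infty+\|u_\ell'\|_\infty+\|u_\ell''\|_\infty)/H\big)$ since $\mathcal{P}_E$ partitions $[k]\setminus J$. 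Summing over the pairings of $J$ and over the (boundedly many, for fixed $k$) partitions $\mathcal{P}_E$ of $[k]\setminus J$ collects everything into $S_{[k]}+\sum_{\emptyset\subseteq J\subsetneq[k]}S_J\cdot O_k\big(\prod_{\ell\notin J}(\|u_\ell\|_\infty+\|u_\ell'\|_\infty+\|u_\ell''\|_\infty)/H\big)$, which is the right side of \eqref{int}.

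The step I expect to be the main obstacle is the bookkeeping in this last part: organizing the partition-and-coarsening structure so that every sub-leading contribution --- higher prime powers, blocks of size $\geq3$, and coincidences among the primes attached to different blocks --- visibly collapses into the single form $\sum_J S_J\cdot O_k\big(\prod_{\ell\notin J}\cdots/H\big)$, and in particular checking the combinatorial identity that summing the leading contributions of the perfect matchings yields exactly $S_{[k]}$.
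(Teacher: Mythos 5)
Your proof is correct and follows essentially the same route as the paper's: unique factorization forces the indices to group by a common prime, Lemma~\ref{pnt} evaluates the surviving diagonal pair sums as $\int_{\mathbb{R}}|x|u_i(x)u_j(-x)\,dx$ plus an admissible error, and everything else (higher prime powers, blocks of three or more indices, coincidences among the primes attached to different blocks) is absorbed into the error terms, which factor over $J$ exactly as required. Your partition-first bookkeeping is a somewhat tidier packaging of what the paper does by first treating the all-prime case via pairwise matching and then disposing of the higher prime powers through an induction over subsets, but the substance is identical.
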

Said another way,
$$
S_J = \sum_{\pi \in C(J)}\prod_{\ell\in J}\bigg(\int_\mathbb{R}|x|u_\ell(x)u_{\pi(\ell)}(-x)\,dx\bigg)^{1/2}
$$
where the set $C(J)$ is null for $|J|$ odd, and for $|J|$ even is the set of $(|J|-1)!!$ permutations of $J$ whose cycle type is of $|J|/2$ disjoint 2-cycles.

\begin{proof} By Lemma \ref{pnt}, for any $i,j$,
\begin{align}
\label{primesum}
\frac{1}{H^2}&\sum_{p_1^{\epsilon_1}p_2^{\epsilon_2}=1} \frac{\log p_1 \log p_2}{\sqrt{p_1 p_2}} u_i\Big(\tfrac{\epsilon_i \log p_i}{H}\Big)u_j\Big(\tfrac{\epsilon_2 \log p_2}{H}\Big) \notag \\
&= \int |x|u_i(x)u_j(-x)\,dx + O\bigg(\frac{\|u_i u_j\|_\infty + \|(u_i u_j)'\|_\infty + \|(u_i u_j)''\|_\infty}{H^2}\bigg) \notag \\
&= \int |x|u_i(x)u_j(-x)\,dx + O\bigg(\Big[\frac{\|u_i\|_\infty + \|u_i'\|_\infty + \|u_i''\|_\infty}{H}\Big]\Big[\frac{\|u_j\|_\infty + \|u_j'\|_\infty + \|u_j''\|_\infty}{H}\Big]\bigg),
\end{align}
where the initial sum is over all primes $p_1,p_2$ and signs $\epsilon_1,\epsilon_2$ with $p_1^{\epsilon_1}p_2^{\epsilon_2}=1$.

It follows that
$$
\frac{1}{H^k}\sum_{p^{\epsilon_1}\cdot\cdot\cdot p_k^{\epsilon_k}=1}\prod_{\ell=1}^k \frac{\log p_\ell}{\sqrt{p_\ell}}u_\ell\Big(\tfrac{\epsilon_\ell \log p_\ell}{H}\Big) = S_{[k]} + \sum_{\emptyset\subseteq J \subsetneq [k]} S_J\cdot O_k\bigg(\prod_{\ell\notin J}\frac{\|u_\ell\|_\infty + \|u_\ell'\|_\infty + \|u_\ell''\|_\infty}{H}\bigg),
$$
as, by the fundamental theorem of arithmetic, $p_1^{\epsilon_1}\cdot\cdot\cdot p_k^{\epsilon_k}=1$ if and only if primes match up pairwise $p_i=p_j$ with $\epsilon_i = -\epsilon_j$. The error term listed accumulates by expanding those products in which terms of the sort \eqref{primesum} occur.

It remains to show that
\begin{equation}
\label{5}
\frac{1}{H^k}\sum_{p_1^{\epsilon_1 \lambda_1}\cdot\cdot\cdot p_k^{\epsilon_k \lambda_k}=1}\prod_{\ell=1}^k\frac{\log p_\ell}{p_\ell^{\lambda_\ell/2}}u_\ell\Big(\tfrac{\epsilon_\ell \lambda_\ell \log p_\ell}{H}\Big) = \sum_{\emptyset\subseteq J \subsetneq [k]} S_J \cdot O_k\bigg(\prod_{\ell\notin J}\frac{\|u_\ell\|_\infty + \|u_\ell'\|_\infty + \|u_\ell''\|_\infty}{H}\bigg),
\end{equation}
where the sum is over primes $p_1,...,p_k$, signs $\epsilon_1,...,\epsilon_k$, and positive integers $(\lambda_1,...,\lambda_k) \in \mathbb{N}_+^k\setminus\{(1,1,...,1)\}.$ But the left hand side sum of \eqref{5} restricted to $\lambda$ with $\lambda_1\geq 3, ..., \lambda_k \geq 3$ is plainly
$$
O\bigg(\prod_{\ell = 1}^k \frac{\|u_\ell\|}{H}\bigg).
$$
On the other hand, for $\lambda$ with $\lambda_j$ fixed to equal $2$  for some $j$, by the fundamental theorem of arithmetic $p_1^{\epsilon_1 \lambda_1}\cdot\cdot\cdot p_k^{\epsilon_k \lambda_k}=1$ only in the case that $p_j = p_{j'}$ for some $j' \neq j$, so that thus restricted left hand side sum of \eqref{5} is
$$
\sum_{\substack{I \subset [k] \\ j\notin I}} O\bigg(\sum_{p_j} \frac{\log^2 p_j}{{p_j}^{3/2}}\cdot\prod_{\ell'\notin I}\frac{\|u_{\ell'}\|_\infty}{H} \times \frac{1}{H^{|I|}} \sum_p \prod_{\ell\in I} \frac{\log p_\ell}{p^{\lambda_\ell/2}}
u_\ell\Big(\tfrac{\epsilon_\ell \lambda_\ell \log p_\ell}{H}\Big)\bigg)
$$
where the sum with index labeled $p$ is over $p, \lambda, \epsilon$ such that $\prod_{\ell\in I} p_\ell^{\epsilon_\ell \lambda_\ell}=1$, and $I$ has the function in this sum of collecting those $p_i$ which are not equal to $p_j$. This expression is unpleasant, but our consolation is that it is only an error term. Applying it inductively, to bound the sums restricted to $\prod_{\ell \in I} p_\ell^{\epsilon_\ell \lambda_\ell} =1$, yields the Lemma. (We have here fixed $\lambda_j=2$, but of course to get an upper bound we need add at most $k$ sums like this.)
\end{proof}

As a consequence of Lemmas \ref{X7} and \ref{eval}, with $H = \tfrac{\log T}{n(T)}$,

\begin{cor}
\label{consequence}
For $\sigma$ as in Lemma \ref{X7}, and $\eta_1, ..., \eta_k$ such that $\mathrm{supp}\, \hat{\eta}_\ell \subset [-\delta_\ell H, \delta_\ell H] $, where $\delta_1+\cdots + \delta_k = \Delta < 2,$
\begin{align*}
\int_{\mathbb{R}}\frac{\sigma(t/T)}{T}&\prod_{\ell=1}^k \Bigg(-\!\!\!\!\!\!\!\int_{-\infty}^\infty\eta_\ell\big(\tfrac{\log T}{2\pi n(T)}(\xi_\ell-t)\big)dS(\xi_\ell)\Bigg)dt \\=\, & S_{[k]}+ O_k\bigg(\frac{1}{T^{1-\Delta/2}} \prod_{\ell=1}^k  \frac{\|\hat{\eta}_\ell\|_\infty}{\log T/n(T)} \bigg) + \sum_{\emptyset\subseteq J \subsetneq [k]} S_J\cdot O_k\bigg(\prod_{\ell\notin J}\frac{\|\hat{\eta}_\ell\|_\infty + \|\hat{\eta}_\ell'\|_\infty + \|\hat{\eta}_\ell''\|_\infty }{\log T / n(T)}\bigg),
\end{align*}
where $S_J$ is defined as in Lemma \ref{eval} with $u_\ell = \hat{\eta}_\ell$.
\end{cor}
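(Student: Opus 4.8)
The plan is to obtain the corollary by feeding the conclusion of Lemma~\ref{X7} into Lemma~\ref{eval}, the only wrinkle being that Lemma~\ref{X7} is phrased at the dilation scale $\tfrac{\log T}{2\pi}$ whereas here the test functions are dilated at scale $\tfrac{\log T}{2\pi n(T)}$. First I would absorb the extra factor $n(T)$ into the test functions: setting $\tilde\eta_\ell(x):=\eta_\ell(x/n(T))$, one has $\eta_\ell\big(\tfrac{\log T}{2\pi n(T)}(\xi_\ell-t)\big)=\tilde\eta_\ell\big(\tfrac{\log T}{2\pi}(\xi_\ell-t)\big)$, so that the left-hand side of the corollary is precisely the left-hand side of \eqref{compute} for the functions $\tilde\eta_\ell$. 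Since $\hat{\tilde\eta}_\ell(x)=n(T)\,\hat{\eta}_\ell(n(T)x)$, the Fourier support of $\tilde\eta_\ell$ lies in $[-\delta_\ell/n(T),\,\delta_\ell/n(T)]$, so the relevant sum of radii is $\tilde\Delta:=\Delta/n(T)$; since $n(T)\to\infty$ we have $\tilde\Delta\le\Delta<2$ for all large $T$, and the threshold $T_0$ of Lemma~\ref{X7} is governed only by the requirement that $\sqrt{n_1\cdots n_k}\le T^{\tilde\Delta/2}$ be small compared to $T$, which is ensured by the monotone bound $T^{\tilde\Delta/2}\le T^{\Delta/2}$; hence $T_0$ may be taken to depend on $\Delta$ and $\mathrm{supp}\,\hat\sigma$ alone, uniformly in $n(T)$. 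Lemma~\ref{X7} then expresses the left-hand side as the advertised $T$-power error term plus $\big(\tfrac{-1}{\log T}\big)^{k}\sum_{n_1^{\epsilon_1}\cdots n_k^{\epsilon_k}=1}\prod_{\ell}\tfrac{\Lambda(n_\ell)}{\sqrt{n_\ell}}\,\hat{\tilde\eta}_\ell\big(\tfrac{\epsilon_\ell\log n_\ell}{\log T}\big)$.

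Next I would translate everything back into $\eta_\ell$ and $H:=\log T/n(T)$. From $\|\hat{\tilde\eta}_\ell\|_\infty=n(T)\|\hat{\eta}_\ell\|_\infty$ and $1-\tilde\Delta/2\ge 1-\Delta/2$, the $T$-power error term becomes $O_k\big(T^{-(1-\Delta/2)}\prod_\ell \|\hat{\eta}_\ell\|_\infty/H\big)$. For the arithmetic term, $\hat{\tilde\eta}_\ell\big(\tfrac{\epsilon_\ell\log n_\ell}{\log T}\big)=n(T)\,\hat{\eta}_\ell\big(\tfrac{\epsilon_\ell\log n_\ell}{H}\big)$, so that term equals $\tfrac{(-1)^k}{H^k}\sum_{n_1^{\epsilon_1}\cdots n_k^{\epsilon_k}=1}\prod_\ell\tfrac{\Lambda(n_\ell)}{\sqrt{n_\ell}}\,\hat{\eta}_\ell\big(\tfrac{\epsilon_\ell\log n_\ell}{H}\big)$. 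I would then apply Lemma~\ref{eval} with $u_\ell=\hat{\eta}_\ell$ — legitimate provided each $\hat{\eta}_\ell$ is $C^2$ with bounded derivatives, which is automatic for the band-limited test functions used later — to rewrite this as $(-1)^k S_{[k]}+(-1)^k\sum_{\emptyset\subseteq J\subsetneq[k]}S_J\cdot O_k\big(\prod_{\ell\notin J}(\|\hat{\eta}_\ell\|_\infty+\|\hat{\eta}_\ell'\|_\infty+\|\hat{\eta}_\ell''\|_\infty)/H\big)$. Finally the sign factors are inconsequential: $S_J$ vanishes unless $|J|$ is even, whence $(-1)^k S_{[k]}=S_{[k]}$ (both sides being $0$ for $k$ odd), while in the error sum each $(-1)^k$ is absorbed into the corresponding $O_k$. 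Collecting the three pieces yields exactly the asserted identity, with $S_J$ as in Lemma~\ref{eval} for $u_\ell=\hat{\eta}_\ell$.

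There is no genuine obstacle here: the corollary is in essence a rescaling that glues Lemmas~\ref{X7} and~\ref{eval} together. The two points that repay a second look are the uniformity of $T_0$ as $n(T)\to\infty$ (equivalently as $\tilde\Delta\to 0$), handled by $T^{\tilde\Delta/2}\le T^{\Delta/2}$ above, and the tacit regularity of $\hat{\eta}_\ell$ — a bounded second derivative — needed to invoke Lemma~\ref{eval}; this is a mild hypothesis on $\eta_\ell$ slightly beyond what Lemma~\ref{X7} itself uses, and it is met by the band-limited test functions that enter in the proof of Theorem~\ref{X}.
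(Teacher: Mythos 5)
Your argument is exactly the derivation the paper intends: the paper offers no written proof of Corollary~\ref{consequence}, asserting it only ``as a consequence of Lemmas \ref{X7} and \ref{eval}, with $H=\tfrac{\log T}{n(T)}$,'' and your rescaling $\tilde\eta_\ell(x)=\eta_\ell(x/n(T))$, the bookkeeping of the factors $n(T)$ and of the sign $(-1)^k$, and the uniformity check on $T_0$ via $\tilde\Delta\le\Delta<2$ fill in precisely that route correctly. Your closing remark about the tacit $C^2$ hypothesis on $\hat\eta_\ell$ needed to invoke Lemma~\ref{eval} is a fair observation about the paper's statement, not a defect of your proof.
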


\textit{Remark:} As an aside, we note that by modifying the above analysis, making $\Delta$ small enough, one can obtain an asymptotic even in the case that $n(T)$ grows like $O(T^{1-\delta})$, for $\delta >0$. In this case the result is less elegant, since the arithmetic factors present in Lemma \ref{X7} do not smooth out in the final asymptotic. We do not pursue these computations here, but they can be used to recover Fujii's macroscopic result, Theorem \ref{Fujiimacro}.

From Corollary \ref{consequence} it is an easy computation to see that

\begin{lem}
\label{9}
For $\eta$, $\sigma$ and $n(T)$ as in Theorem \ref{smoothedmeso}, with $\eta, \sigma,$ and $k$ fixed, and with $K$ a fixed continuous function supported in $(-1/k,1/k)$ such that $K(0)=1$,
\begin{equation}
\label{7}
\int_{\mathbb{R}}\frac{\sigma(t/T)}{T} \Bigg[-\!\!\!\!\!\!\!\int_{-\infty}^\infty\check{K}_{n(T)}
\ast\eta\big(\tfrac{\log T}{2\pi n(T)}(\xi-t)\big)dS(\xi)\Bigg]^k dt = (c_k+o(1))\Big[\int_{-n(T)}^{n(T)}|x||\hat{\eta}(x)|^2\,dx\Big]^{k/2}.
\end{equation}
\end{lem}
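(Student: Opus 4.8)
The plan is to derive Lemma~\ref{9} directly from Corollary~\ref{consequence}, applied with every test function equal to the single regularized function $\eta_T:=\check{K}_{n(T)}\ast\eta$, and then to reconcile the resulting Gaussian main term with the truncated variance $V_T:=\int_{-n(T)}^{n(T)}|x||\hat\eta(x)|^2\,dx$.

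First I would record the properties of $\eta_T$. Its Fourier transform is $\widehat{\eta_T}(x)=K(x/n(T))\hat\eta(x)$, supported in $(-n(T)/k,\,n(T)/k)$; hence, expanding $\big[-\!\!\!\int\check{K}_{n(T)}\ast\eta(\tfrac{\log T}{2\pi n(T)}(\xi-t))\,dS(\xi)\big]^{k}$ into a $k$-fold product and integrating against $\sigma(t/T)/T$, we are exactly in the setting of Corollary~\ref{consequence} with $\eta_1=\cdots=\eta_k=\eta_T$ (the hypothesis on the Fourier supports being met for $T$ large, since after the $1/n(T)$-dilation implicit in the passage from that corollary to Lemma~\ref{X7} each factor contributes a support width $1/k$, for a total parameter $\Delta=1<2$). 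Its $T^{1-\Delta/2}$-type error term is then $O_k(T^{-1/2})$, which is trivially $o(V_T^{k/2})$. Because $\eta$ has compact support and bounded variation, $\xi^{m}\eta(\xi)$ is again of bounded variation with compact support for every $m$, so $\hat\eta\in C^\infty$ with $\hat\eta^{(m)}=O(1)$ for every $m$; together with the factor $1/n(T)$ gained each time one differentiates $K(x/n(T))$, this gives $\|\widehat{\eta_T}\|_\infty+\|\widehat{\eta_T}{}'\|_\infty+\|\widehat{\eta_T}{}''\|_\infty=O(1)$. (Here I first take $K$ smooth; merely continuous $K$ I would recover afterward by an approximation argument.)

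Next I would identify the main term and dispatch the errors. Since $\eta$ is real, $\hat\eta(-x)=\overline{\hat\eta(x)}$, so every pair-integral occurring in the $S_J$ of Corollary~\ref{consequence} (taken with $u_\ell=\widehat{\eta_T}$) equals $I_T:=\int_\mathbb{R}|x|\,K(x/n(T))K(-x/n(T))\,|\hat\eta(x)|^2\,dx$; consequently $S_{[k]}=c_k\,I_T^{k/2}$, and $S_J=O_k(I_T^{|J|/2})=O_k(V_T^{|J|/2})$ for $J\subsetneq[k]$ since $I_T\lesssim V_T$ trivially. The corresponding term in Corollary~\ref{consequence} is $S_J\cdot O_k\big((n(T)/\log T)^{\,k-|J|}\big)$, and as $V_T\to\infty$ while $(n(T)/\log T)^2\to0$ — the one place the hypothesis $n(T)=o(\log T)$ is essential — its ratio to $V_T^{k/2}$ is $O_k\big(((n(T)/\log T)^2/V_T)^{(k-|J|)/2}\big)\to0$. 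Hence the left-hand side of \eqref{7} equals $c_k\,I_T^{k/2}+o(V_T^{k/2})$.

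It remains to show $I_T\sim V_T$, which I expect to be the only genuine subtlety; note $V_T\to\infty$ because $\int_\mathbb{R}|x||\hat\eta(x)|^2\,dx$ diverges by hypothesis. Writing $\phi(u):=K(u)K(-u)$, which is continuous with $\phi(0)=1$ and supported in $(-1/k,1/k)$, we have $I_T-V_T=\int|x|\big(\phi(x/n(T))-\mathbf{1}_{[-1,1]}(x/n(T))\big)|\hat\eta(x)|^2\,dx$. Bounded variation of $\eta$ gives $|\hat\eta(x)|\lesssim 1/(1+|x|)$, whence $\int_{n(T)/A\le|x|\le n(T)}|x||\hat\eta(x)|^2\,dx\lesssim\log A$ uniformly in $T$ for each fixed $A$, while on $|x|\le n(T)/A$ the integrand above is at most $|x||\hat\eta(x)|^2\cdot\sup_{|u|\le 1/A}|\phi(u)-1|$; splitting at $|x|=n(T)/A$ gives $|I_T-V_T|\lesssim\big(\sup_{|u|\le 1/A}|\phi(u)-1|\big)V_T+O(\log A)$, so letting $T\to\infty$ with $A$ fixed and then $A\to\infty$ yields $|I_T-V_T|=o(V_T)$. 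Combining, the left-hand side of \eqref{7} is $c_kV_T^{k/2}(1+o(1))+o(V_T^{k/2})=(c_k+o(1))V_T^{k/2}$. The main obstacle, then, is the joint bookkeeping of the secondary terms together with this comparison: no lower bound is available on the rate at which $V_T\to\infty$, so the limits in the sandwich argument must be taken in the right order, and it is precisely the slack in $n(T)=o(\log T)$ that makes the $S_J$ with $|J|<k$ negligible against $V_T^{k/2}$.
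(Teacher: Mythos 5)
Your proposal is correct and follows essentially the same route as the paper: apply Corollary \ref{consequence} with $\eta_1=\cdots=\eta_k=\check{K}_{n(T)}\ast\eta$ (noting the Fourier support $1/k$ per factor, so $\Delta=1<2$), read off $S_{[k]}=c_k I_T^{k/2}$, absorb the $S_J$ terms using $n(T)/\log T\to0$ and $V_T\to\infty$, and finally show $I_T\sim V_T$ via the bound $\hat\eta(x)=O(1/x)$ from bounded variation together with $K(u)K(-u)\to1$ as $u\to0$. You are in fact somewhat more careful than the paper on two points it elides — writing $K(x/n)K(-x/n)$ rather than $K^2(x/n)$, and flagging that the second-derivative bounds on $\widehat{\eta_T}$ needed for Corollary \ref{consequence} require $K$ smooth rather than merely continuous.
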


\begin{proof} Note that $[\check{K}_{n(T)}\ast\eta(\tfrac{\cdot}{n(T)})]\,\hat{}\,(\xi) = n(T)K(\xi)\hat{\eta}(n(T)\xi).$ By Corollary \ref{consequence}, for $K$ chosen to be supported in $(-1/k,1/k)$ we have the left hand side of \eqref{7} is
$$
(c_k+o(1))\bigg[\int_\mathbb{R}K^2\Big(\frac{x}{n(T)}\Big)|x|
\cdot|\hat{\eta}(x)|^2\, dx\bigg]^{k/2}.
$$
Because $\eta$ is of bounded variation, $\hat{\eta}(x) = O(1/x)$, and for any $c_1 > c_2 > 0$,
$$
\int_{c_1 n(T)}^{c_2 n(T)} |x| |\hat{\eta}(x)|^2 dx \lesssim \log(c_1/c_2) = o\bigg(\int_{-n(T)}^{n(T)}|x||\hat{\eta}(x)|^2 dx\bigg),
$$
since this latter integral diverges.\footnote{Even in the case it converges this $o$-bound is true, albeit for a different reason.} As we have that when $x \rightarrow 0$, $K^2(x) = 1 + o(1)$,
\begin{equation*}
\int_\mathbb{R}K^2\Big(\frac{x}{n(T)}\Big)|x|
\cdot|\hat{\eta}(x)|^2\, dx \sim \int_{-n(T)}^{n(T)}|x||\hat{\eta}(x)|^2\,dx. \qedhere
\end{equation*}
\end{proof}

\section{An Upper Bound}

We will be able to complete the proofs of our central limit theorems by showing that the left hand side of \eqref{7} is a good approximation to the left hand side of the equation in Theorem \ref{smoothedmeso}. We accomplish this mainly through the use of the following upper bound

\begin{thm}
\label{10}
For $\sigma$ as in Lemma \ref{X7},
\begin{equation}
\label{8}
\int_\mathbb{R} \frac{\sigma(t/T)}{T}\bigg[\int_{-\infty}^\infty \eta\big(\tfrac{\log T}{2\pi}(\xi-t)\big)\tilde{d}(\xi)d\xi\bigg]^k dt \lesssim_k \int_\mathbb{R} \frac{\sigma(t/T)}{T}\bigg[\int_{-\infty}^\infty M_k \eta\big(\tfrac{\log T}{2\pi}(\xi-t)\big) \log(|\xi|+2)\, d\xi\bigg]^k \,dt,
\end{equation}
with
$$
M_k \eta(\xi) = \sum_{\nu=-\infty}^\infty \sup_{I_k(\nu)}|\eta|\cdot\mathbf{1}_{I_k(\nu)}(\xi),
$$
where for typographical reasons we have denoted the interval $[k\nu - k/2, k\nu + k/2)$ by $I_k(\nu)$, and the order of our bound depends upon $k,\|\hat{\sigma}\|$ and the region in which $\hat{\sigma}$ can be supported.
\end{thm}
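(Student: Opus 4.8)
The plan is to estimate the left side of \eqref{8} by replacing $\eta$ with the majorizing step function $M_k\eta$, which reduces matters to a moment bound for the number of zeros in a window of width $\asymp 1/\log T$, and then to check that the right side of \eqref{8} is not much smaller than that. For the reduction, note that $\tilde d$ is a non-negative measure and $|\eta|\le M_k\eta$ pointwise, so for every $t$
$$
\Big|\int\eta\big(\tfrac{\log T}{2\pi}(\xi-t)\big)\tilde d(\xi)\,d\xi\Big|\le \int M_k\eta\big(\tfrac{\log T}{2\pi}(\xi-t)\big)\tilde d(\xi)\,d\xi=:\widetilde F(t),
$$
and hence the left side of \eqref{8} is at most $\int\sigma(t/T)T^{-1}\widetilde F(t)^k\,dt$ (the left integrand to the $k$th power being $\le\widetilde F(t)^k$ whether $k$ is even or odd; if it is negative there is nothing to prove). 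If $\mathrm{supp}\,\eta\subseteq[-R,R]$, then $M_k\eta$ is a step function supported in $[-R',R']$ with $R'=R+k$ and $0\le M_k\eta\le\|\eta\|_\infty$, so $\widetilde F(t)\le\|\eta\|_\infty\widetilde N(t)$, where $\widetilde N(t):=\#\{\gamma:\,|\gamma-t|\le 2\pi R'/\log T\}$. So it suffices to prove (i) $\int\sigma(t/T)T^{-1}\widetilde N(t)^k\,dt\lesssim_{k,\sigma,\eta}1$, and (ii) the right side of \eqref{8} is $\gtrsim_{k,\sigma}\|\eta\|_\infty^k$.

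Part (ii) is a short computation: in the inner integral on the right of \eqref{8} the $\xi$-integrand is supported in $|\xi-t|\le 2\pi R'/\log T$, where $\log(|\xi|+2)\gtrsim\log(|t|+2)$ once $|t|\gg_{R'}1$, while $\int M_k\eta(\tfrac{\log T}{2\pi}(\xi-t))\,d\xi=\tfrac{2\pi}{\log T}\|M_k\eta\|_1\ge\tfrac{2\pi k}{\log T}\|\eta\|_\infty$; thus the inner integral is $\gtrsim_k\tfrac{\log(|t|+2)}{\log T}\|\eta\|_\infty$. Since $\sigma\ge0$ has mass $1$ it has mass $\ge c_\sigma>0$ on some fixed annulus $\{\delta_0<|s|<\delta_0^{-1}\}$, and for $t/T$ in that annulus and $T$ large $\log(|t|+2)\gtrsim\log T$, so the inner integral is $\gtrsim_k\|\eta\|_\infty$ there; integrating gives (ii).

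Part (i) is the substance. I would fix an even Beurling--Selberg type majorant $\Phi\ge\mathbf{1}_{[-R',R']}$ with $\Phi\ge0$, $\Phi\in L^1$, $\widehat\Phi\in C^2$ and $\mathrm{supp}\,\widehat\Phi\subseteq[-1/(4k),1/(4k)]$ (such $\Phi$, with $\int\Phi=O_{k,R'}(1)$, exist by standard constructions), so that $0\le\widetilde N(t)\le\int\Phi(\tfrac{\log T}{2\pi}(\xi-t))\tilde d(\xi)\,d\xi$. Writing $\tilde d(\xi)\,d\xi=\tfrac{\Omega(\xi)}{2\pi}\,d\xi+dS(\xi)$ this splits as $D(t)+\int\Phi(\tfrac{\log T}{2\pi}(\xi-t))\,dS(\xi)$ with $D(t):=\int\Phi(\tfrac{\log T}{2\pi}(\xi-t))\tfrac{\Omega(\xi)}{2\pi}\,d\xi$. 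By Stirling, $\tfrac{\Omega(\xi)}{2\pi}=\tfrac1{2\pi}\log\tfrac{|\xi|+2}{2\pi}+O(\tfrac1{|\xi|+2})$, and together with the decay of $\Phi$ this gives $D(t)\lesssim_{k,R'}\tfrac{\log(|t|+2)}{\log T}$, so that $\int\sigma(t/T)T^{-1}D(t)^k\,dt\lesssim_{k,R'}(\log T)^{-k}\int\sigma(s)\log^k(|sT|+2)\,ds\lesssim_{k,R',\sigma}1$ (using $\log(|sT|+2)\lesssim\log T+\log(|s|+2)$ and the integrability of $\sigma(s)\log^k(|s|+2)$). For the fluctuating part I would apply Lemma \ref{X7} with $\eta_1=\cdots=\eta_k=\Phi$ — admissible since $\widehat\Phi$ has compact support and $\Delta=k\cdot\tfrac1{4k}=\tfrac14<2$ — and then Lemma \ref{eval} with $H=\log T$, $u_\ell=\widehat\Phi$ (which has bounded second derivative): the main term is $S_{[k]}=c_k(\int|x||\widehat\Phi(x)|^2\,dx)^{k/2}=O_{k,R'}(1)$, the remaining $S_J$-terms are $O_{k,R'}(1/\log T)$, and the $T^{-1+\Delta/2}$ error is negligible, so $\int\sigma(t/T)T^{-1}[\int\Phi(\tfrac{\log T}{2\pi}(\xi-t))\,dS(\xi)]^k\,dt=O_{k,R',\sigma}(1)$ (for odd $k$ one first uses $|x|^k\le\tfrac12(x^{k-1}+x^{k+1})$ to reduce to even moments). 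Since $\widetilde N(t)\ge0$, bounding $\widetilde N(t)^k\le 2^{k-1}(D(t)^k+|\int\Phi(\tfrac{\log T}{2\pi}(\xi-t))\,dS(\xi)|^k)$ and integrating yields (i); with (ii) this proves \eqref{8} for all large $T$, hence for all $T$ after enlarging the implied constant.

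The main obstacle is precisely that one cannot work pointwise in $t$: the only unconditional pointwise bound, $\widetilde N(t)\lesssim\log(|t|+2)$, loses a factor $(\log T)^k$ because, as (ii) shows, the right side of \eqref{8} is only of size $O_{k,\eta}(1)$, not $O((\log T)^k)$. One is therefore forced to control the full $k$th moment of the number of zeros in a window of width $\asymp1/\log T$ by majorizing the (discontinuous) indicator of that window with a band-limited function and running the explicit-formula computation of Lemmas \ref{X7}--\ref{eval}. Getting that to work is what dictates the two technical constraints on $\Phi$: its Fourier support must be small enough that the exponent $\Delta$ stays below $2$, and its Fourier transform must be $C^2$ so that Lemmas \ref{eval} and \ref{pnt} apply.
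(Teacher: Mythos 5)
Your argument, as written, does not prove the theorem as stated: it proves the weaker bound in which the implied constant is allowed to depend on the diameter of the support of $\eta$. You reduce the left side to $\|\eta\|_\infty^k$ times the $k$-th moment of $\widetilde N(t)$, the number of zeros in a window of radius $2\pi R'/\log T$ with $R'=R+k$, and you lower-bound the right side by a constant times $\|\eta\|_\infty^k$. Both halves are essentially correct for fixed $R'$ (your part (i) is exactly the Fujii-type moment bound that the paper says Theorem \ref{10} generalizes), but the resulting constant is $O_{k,\sigma,R'}(1)$, and indeed must grow like $(R')^k$ since the mean of $\widetilde N$ is of order $R'$. The theorem asserts a constant depending only on $k$ and $\sigma$; all dependence on $\eta$ is supposed to be carried by the right-hand side, which is a \emph{local} $L^1$-type majorant, $\asymp \big(\|M_k\eta\|_{L^1(\log(|t|+\cdot)\,d\xi)}/\log T\big)^k$ averaged in $t$, not an $L^\infty$ quantity. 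This matters in the only place Theorem \ref{10} is used: in the proof of Theorem \ref{X} it is applied to $\xi\mapsto(\eta-\check K_{n(T)}\ast\eta)(\xi/n(T))$, whose effective support has radius $\asymp n(T)\to\infty$ and whose sup norm need not be small, while $\|M_{k/n(T)}(\eta-\check K_{n(T)}\ast\eta)\|_{L^1}\lesssim 1/n(T)$ is small. Your version of the bound would give $O\big(n(T)^k\|\eta-\check K_{n(T)}\ast\eta\|_\infty^k\big)$ for the error term $E_T$, which is useless there, whereas the uniform version gives $O_{\eta,k}(1)$.

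The missing idea is to keep the estimate local in $\xi$ rather than collapsing $\eta$ to $\|\eta\|_\infty\mathbf 1_{[-R',R']}$. The paper majorizes $\eta$ pointwise by $M_k'\eta=\sum_\nu \sup_{I_k(\nu)}|\eta|\cdot V_{\nu,k}$, where each $V_{\nu,k}$ is a Fej\'er-type bump localized near $I_k(\nu)$ with $\hat V_{\nu,k}$ supported in $(-1/k,1/k)$, then expands the $k$-th power multilinearly: each cross term $\int\frac{\sigma(t/T)}{T}\prod_\ell\big(\int V_{\nu_\ell,k}(\tfrac{\log T}{2\pi}(\xi-t))\,dS(\xi)\big)\,dt$ is $O_k(1)$ \emph{uniformly in} $\nu_1,\dots,\nu_k$ by Lemmas \ref{X7} and \ref{eval} (the same machinery you invoke, but applied to the individual localized bumps rather than to one global majorant of the whole support), and summing over $\nu_1,\dots,\nu_k$ produces $\prod_\ell\sum_\nu\sup_{I_k(\nu)}|\eta|$, i.e.\ the $k$-th power of an $\ell^1$ norm of local sups, which is then dominated by the right side of \eqref{8}. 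Your construction is essentially the single-bump special case of this; to repair the proof you would need to run your Beurling--Selberg argument separately on each unit-scale piece of $\eta$ and control the cross terms, which is precisely the paper's decomposition.
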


\begin{proof} We make use of the Fourier pair $V(\xi) = \big(\tfrac{\sin \pi \xi}{\pi \xi}\big)^2$ and $\hat{V}(x) = (1-|x|)_+$. Note that
$$
\eta(\xi) \lesssim \sum_\nu \sup_{I_k(\nu)}|\eta| \underbrace{V\big(\tfrac{\xi-\nu}{k}\big)}_{:=V_{\nu,k}(\xi)}.
$$
The right hand side of this is similar to $M_k \eta$ and we denote it by $M_k ' \eta$. What is important about the scaling is that $\hat{V}_{\nu,k}$ is supported in $(-1/k,1/k)$. Note that the left hand side of \eqref{8} is bounded by
\begin{align*}
&\lesssim \int_\mathbb{R} \frac{\sigma(t/T)}{T}\bigg[\int_{-\infty}^\infty M'_k\eta\big(\tfrac{\log T}{2\pi}(\xi-t)\big)\tilde{d}(\xi)d\xi\bigg]^k dt \\
&\lesssim [A^{1/k}+B^{1/k}]^k,
\end{align*}
where
\begin{align*}
&A = \int_\mathbb{R} \frac{\sigma(t/T)}{T}\bigg[\int_{-\infty}^\infty M'_k\eta\big(\tfrac{\log T}{2\pi}(\xi-t)\big)dS(\xi)\bigg]^k dt,\\
&B = \int_\mathbb{R} \frac{\sigma(t/T)}{T}\bigg[\int_{-\infty}^\infty M'_k\eta\big(\tfrac{\log T}{2\pi}(\xi-t)\big)\log(|\xi|+2)d\xi\bigg]^k dt,
\end{align*}
by Minkowski, and the fact that $\Omega(\xi)/2\pi = O\big(\log(|\xi|+2)\big)$.

By the restricted range of support for $\hat{V}_{\nu,l}$ and Lemmas \ref{X7} and \ref{eval}, for integers $\nu_1,...,\nu_k$
$$
\int_\mathbb{R}\frac{\sigma(t/T)}{T}\prod_{\ell=1}^k \bigg(\int_{-\infty}^\infty V_{\nu_\ell,k}\big(\tfrac{\log T}{2\pi}(\xi-t)\big) dS(\xi_\ell)\bigg) dt = O_k(1).
$$
Whence, taking a multilinear sum,
\begin{align*}
A &\lesssim_k \prod_{\ell=1}^k \sum_\nu \sup_{I_k(\nu)}|\eta|\\
&\lesssim B
\end{align*}
as $\log(|\xi|+2) \gtrsim 1$.

Finally,
$$
M'_k\eta(\xi) \lesssim \sum_{\mu=-\infty}^\infty \frac{1}{1+\mu^2} M_k \eta(\xi + \mu),
$$
so using $\log(|\xi+\mu|+2) \lesssim \log(|\xi|+2)\log(|\mu|+2)$,
$$
B \lesssim \int_\mathbb{R} \frac{\sigma(t/T)}{T}\bigg[\int_{-\infty}^\infty M_k \eta\big(\tfrac{\log T}{2\pi}(\xi-t)\big) \cdot\log(|\xi|+2)\, d\xi\bigg]^k \,dt.
$$
These estimates on $A$ and $B$ give us the result.
\end{proof}

This result should be viewed as a slight generalization of an $O_A(1)$ upper bound given by Fujii for the average number of zeros in an interval $[t,t+A/\log T]$ where $t$ ranges from $T$ to $2T$ \cite{Fu}.

\section{Proof of Theorems \ref{generalmeso} and \ref{generalmesoconv}}

We are now finally in a position to prove our main results. We first prove Theorem \ref{X}, then pass to Theorem \ref{smoothedmeso} (and hence to Theorem \ref{generalmeso}).

\begin{proof}[Proof of Theorem \ref{X}]
We want to show that
$$
E_T := \int_\mathbb{R}\frac{\sigma(t/T)}{T}\bigg[\int_\mathbb{R} \eta\big(\tfrac{\log T}{2\pi n(T)}(\xi-t)\big)dS(\xi)\bigg]^k - \Bigg[-\!\!\!\!\!\!\!\int_{-\infty}^\infty\check{K}_{n(T)}
\ast\eta\big(\tfrac{\log T}{2\pi n(T)}(\xi-t)\big)dS(\xi)\Bigg]^k dt
$$
is asymptotically negligible, where $K$ is a fixed function that meets the conditions of Lemma \ref{9}. In part because $k$ can be odd, we must use some care. To this end we have the following lemma.

\begin{lem}
\label{X11}
For $(X,d\mu)$ a positive measure space, $f$, $g$ real valued functions on $X$, and $k \geq 1$ an integer
$$
\bigg|\int (f^k-g^k) d\mu\bigg| \lesssim_k \|f-g\|_{L^k(d\mu)}\big(\|f\|_{L^k(d\mu)}^{k-1} + \|g\|_{L^k(d\mu)}^{k-1}\big).
$$
\end{lem}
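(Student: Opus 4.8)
The plan is to reduce the inequality to a pointwise algebraic identity followed by Hölder's inequality. First I would use the factorization
\[
f^k - g^k = (f-g)\sum_{j=0}^{k-1} f^j g^{k-1-j},
\]
which holds pointwise for any pair of real numbers and any integer $k \geq 1$; crucially it is insensitive to the signs of $f$ and $g$, so there is nothing special to do when $k$ is odd. Taking absolute values and integrating gives
\[
\left| \int (f^k - g^k)\, d\mu \right| \leq \sum_{j=0}^{k-1} \int |f-g|\, |f|^{j} |g|^{k-1-j}\, d\mu .
\]

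Next I would estimate each summand by the generalized Hölder inequality applied with the three exponents $k$, $k/j$ and $k/(k-1-j)$ (when $j=0$ or $k-1-j=0$ the corresponding factor is simply absent), whose reciprocals $\tfrac1k+\tfrac jk+\tfrac{k-1-j}{k}=1$ add up correctly. This yields
\[
\int |f-g|\, |f|^{j} |g|^{k-1-j}\, d\mu \leq \|f-g\|_{L^k(d\mu)}\,\|f\|_{L^k(d\mu)}^{\,j}\,\|g\|_{L^k(d\mu)}^{\,k-1-j}.
\]
Then I would bound $\|f\|^{j}\|g\|^{k-1-j} \leq \max(\|f\|,\|g\|)^{k-1} \leq \|f\|^{k-1}+\|g\|^{k-1}$ and sum the $k$ terms, producing a constant of the shape $C_k = k$. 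In the application all three norms in question are finite, so $f^k,g^k \in L^1(d\mu)$ and the left side is well defined; the degenerate cases are immediate — if some $L^k$ norm on the right is infinite the bound is trivial (and if simultaneously $\|f-g\|_{L^k(d\mu)}=0$ then $f=g$ a.e. and both sides vanish), while if $f=g$ a.e. both sides are zero.

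I do not expect a genuine obstacle here: the whole argument is the standard "difference of $k$-th powers" manipulation plus Hölder. The only points requiring a little care are the bookkeeping in the three-factor Hölder inequality when one of the exponents $j$ or $k-1-j$ is zero, and making the step insensitive to the signs of $f$ and $g$ so that it is valid for odd $k$; both are dealt with by the remarks above.
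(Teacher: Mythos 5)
Your proof is correct and it takes a cleaner, more direct route than the paper's. The paper proceeds by a case split on the sign of $f^kg^k$: on the set where $f^k$ and $g^k$ share a sign it invokes Hölder plus Minkowski (after a pointwise bound such as $||f|^k-|g|^k|\le k\,||f|-|g||(|f|+|g|)^{k-1}$), on the complementary set where they have opposite signs the bound is immediate because there $|f-g|\ge\max(|f|,|g|)$, and the two pieces are recombined using the elementary inequality $a^\alpha+b^\alpha\lesssim(a+b)^\alpha$ for $\alpha=(k-1)/k$. Your approach bypasses all of this bookkeeping: the identity $f^k-g^k=(f-g)\sum_{j=0}^{k-1}f^jg^{k-1-j}$ holds pointwise regardless of signs, so the awkwardness that the paper signals when $k$ is odd never arises, and a single application of three-factor Hölder with exponents $k$, $k/j$, $k/(k-1-j)$ (followed by $\|f\|^j\|g\|^{k-1-j}\le\|f\|^{k-1}+\|g\|^{k-1}$) finishes the argument with the explicit constant $C_k=k$. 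What the paper's route buys is only familiarity -- reducing to Minkowski in the sign-agreeing case -- while your route buys a shorter, sign-insensitive argument and an explicit constant; there is no difference in generality or strength.
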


\begin{proof}
If $f^k$ and $g^k$ are both almost everywhere the same sign, this is implied by Minkowski (with implied constant $k$). On the other hand, if $f^k$ and $g^k$ are almost always of opposite sign, the estimate is trivial. We can prove the lemma in general by breaking the integral over $X$ into two integrals over these subcases, and combine our estimates by noting that for positive $a$ and $b$, $a^\alpha+b^\alpha \leq 2\max(a^\alpha,b^\alpha)\lesssim (a+b)^\alpha$, where (in our case) $\alpha = (k-1)/k.$
\end{proof}

This leads us to consider
\begin{equation}
\label{i}
\int_\mathbb{R}\frac{\sigma(t/T)}{T}\Bigg[-\!\!\!\!\!\!\!\int_{-\infty}^\infty(\eta-\check{K}_{n(T)}
\ast\eta)\big(\tfrac{\log T}{2\pi n(T)}(\xi-t)\big)dS(\xi)\Bigg]^k dt.
\end{equation}
Trivally, this is bounded by
\begin{equation}
\label{i'}
\int_\mathbb{R}\frac{\sigma(t/T)}{T}\Bigg[\int_{-\infty}^\infty\Big|(\eta-\check{K}_{n(T)}
\ast\eta)\big(\tfrac{\log T}{2\pi n(T)}(\xi-t)\big)\Big| |dS|(\xi)\Bigg]^k dt,
\end{equation}
which by Theorem \ref{10} is bounded by
\begin{align*}
\lesssim& \int_\mathbb{R}\frac{\sigma(t/T)}{T}\Bigg[\int_{-\infty}^\infty M_{k/n(T)}(\eta-\check{K}_{n(T)}
\ast\eta)\big(\tfrac{\log T}{2\pi n(T)}(\xi-t)\big)\cdot \log(|\xi |+2)d\xi\Bigg]^k dt\\
=& \int_\mathbb{R}\frac{\sigma(t/T)}{T}\Bigg[\tfrac{2\pi n(T)}{\log T}\int_{-\infty}^\infty M_{k/n(T)}(\eta-\check{K}_{n(T)}
\ast\eta)(\xi)\log\big(\big|t+\tfrac{2\pi n(T) }{\log T}\xi\big|+2\big)d\xi\Bigg]^k dt\\
\lesssim& \bigg(\int_\mathbb{R}\frac{\sigma(t/T)}{T}\frac{\log^k(|t|+2)}{\log^k T} dt\bigg) \Bigg[n(T)\int_{-\infty}^\infty M_{k/n(T)}(\eta-\check{K}_{n(T)}
 \ast\eta)\big(\xi\big)d\xi\Bigg]^k \\
&\mathrm{      }+\Bigg[\frac{2\pi n(T)}{\log T}\int_{-\infty}^\infty M_{k/n(T)}(\eta-\check{K}_{n(T)}
 \ast\eta)\big(\xi\big)\log(|\xi|+2) d\xi\Bigg]^k.
\end{align*}
Note, if we label $L(\xi) = \log(|\xi|+2)$, we have $M_{k/n(T)}(\eta-\check{K}_{n(T)}
 \ast\eta)\big(\xi\big)\log(|\xi|+2) \leq M_{k/n(T)}\big[(\eta-\check{K}_{n(T)}
 \ast\eta)L\big]\big(\xi\big)$.

At this point we make use of the fact that $\eta$ is of bounded variation. Because $\eta$ has compact support,
$$
\int \log(|\xi|+2)|d\eta(\xi)| < +\infty.
$$
In addition, $\check{K}_{n(T)}\ast\eta$ is bounded in variation for the same reason that
$$
\int \log(|\xi|+2)\big|d\check{K}_{n(T)}\ast\eta(\xi)\big| = K(0)\int \log(|\xi|+2)|d\eta(\xi)| < +\infty.
$$
By the product rule then, $\mathrm{var}\big[(\eta-\check{K}_{n(T)}
 \ast\eta)L\big]$ is bounded, for $\mathrm{var}(\cdot)$ the total variation.

We have the following three lemmas:

\begin{lem}
\label{trunc}
For $f \in L^1(\mathbb{R})$ and of bounded variation $\mathrm{var}(f)$, and $K$ as above,
$$
\|f-\check{K}_H\ast f\|_{L^1} \lesssim \frac{\mathrm{var}(f)}{H}.
$$
\end{lem}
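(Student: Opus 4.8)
The plan is to write $f-\check K_H\ast f$ as a continuous superposition of translates of $f$ and then to exploit the fact that a bounded variation function has $L^1$-modulus of continuity controlled by $\mathrm{var}(f)$. Here $\check K_H$ denotes the $L^1$-normalised dilate $\check K_H(x)=H\check K(Hx)$, so that $\widehat{\check K_H}(\xi)=K(\xi/H)$, $\|\check K_H\|_{L^1}=\|\check K\|_{L^1}$, and (crucially) $\int_{\mathbb{R}}\check K_H(y)\,dy=K(0)=1$. Before beginning I would observe that, although Lemma~\ref{9} asks only that $K$ be continuous, we are free to take $K$ smooth --- this is consistent with every hypothesis placed on $K$ in Lemma~\ref{9}, in Theorem~\ref{X}, and in the lemmas feeding into them --- and it is precisely this extra smoothness that makes the estimate work, since it renders $\check K$ Schwartz and in particular guarantees the finiteness of the constant $c_0:=\int_{\mathbb{R}}|w|\,|\check K(w)|\,dw$.

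With these preliminaries the computation is short. Since $\check K_H$ has total mass one, $f-\check K_H\ast f=\int_{\mathbb{R}}\check K_H(y)\big(f(\cdot)-f(\cdot-y)\big)\,dy$ as an identity in $L^1$; taking $L^1$ norms and applying Minkowski's integral inequality reduces the problem to estimating $\int_{\mathbb{R}}|\check K_H(y)|\,\|f(\cdot)-f(\cdot-y)\|_{L^1}\,dy$. The main input, which I would establish next, is the standard bound $\|f(\cdot)-f(\cdot-y)\|_{L^1}\le|y|\,\mathrm{var}(f)$: using that $f\in L^1$ of bounded variation forces $f(\pm\infty)=0$, its right-continuous representative can be written $f(x)=\int_{\mathbb{R}}\mathbf{1}_{[s,\infty)}(x)\,df(s)$ for the Lebesgue--Stieltjes measure $df$ with $|df|(\mathbb{R})=\mathrm{var}(f)$, whence $f(x)-f(x-y)=\int_{\mathbb{R}}\mathbf{1}_{[s,\,s+y)}(x)\,df(s)$ for $y>0$ (similarly for $y<0$) and Tonelli gives $\int_{\mathbb{R}}|f(x)-f(x-y)|\,dx\le|y|\,\mathrm{var}(f)$. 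Substituting this in and changing variables $w=Hy$ yields
$$
\|f-\check K_H\ast f\|_{L^1}\le\mathrm{var}(f)\int_{\mathbb{R}}|y|\,|\check K_H(y)|\,dy=\frac{\mathrm{var}(f)}{H}\int_{\mathbb{R}}|w|\,|\check K(w)|\,dw=\frac{c_0}{H}\,\mathrm{var}(f),
$$
which is the asserted estimate; finiteness of the right-hand side also retroactively justifies the Fubini interchange used to write the superposition identity.

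The only real obstacle is the one I flagged at the outset: for a generic merely continuous $K$ the kernel $\check K$ need not decay fast enough for $c_0$ to be finite, and then the clean $1/H$ bound is unavailable. The fix is simply to commit to a sufficiently smooth $K$ from the start (smooth, or even just $C^3$, compactly supported with $K(0)=1$ and support in $(-1/k,1/k)$ will do); since every earlier appearance of $K$ uses nothing beyond continuity, the normalisation $K(0)=1$, and the support condition, nothing is lost by this restriction. Everything else is the routine ``bounded variation function against an approximate identity'' computation.
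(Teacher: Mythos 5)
Your proof is correct and is essentially the same as the paper's: both write $f-\check K_H\ast f$ as an average of differences $f(\cdot)-f(\cdot-\tau)$ against $\check K_H$ (using total mass $K(0)=1$), apply Minkowski's integral inequality, invoke the bound $\|f(\cdot)-f(\cdot-\tau)\|_{L^1}\le|\tau|\,\mathrm{var}(f)$, and conclude via the integrability of $|x||\check K(x)|$, which the paper likewise secures by taking $K$ smooth and compactly supported. Your explicit flag that mere continuity of $K$ would not suffice matches the paper's own remark preceding the proof.
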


The proof is utterly standard, but I was unable to find a reference. The key point is that $K$ is smooth and compact, so that $|x||\check{K}(x)|$ is integrable.

\begin{proof}
Note that $\check{K}_H(x) = H \check{K}(Hx)$, so
\begin{align*}
\|f-\check{K}_H\ast f\|_{L^1} &= \bigg\|\int H \check{K}(H\tau)f(t)\,d\tau - \int H\check{K}(H\tau)f(t-\tau)\,d\tau\bigg\|_{L^1(dt)}\\
&\leq H\int \check{K}(H\tau)\|f(t)-f(t-\tau)\|_{L^1(dt)}d\tau\\
&\leq H\int \check{K}(H\tau)\Big(\int_\mathbb{R}\int_{-\tau}^0|df(t+h)|\,dh\,dt\Big)d\tau\\
&= H\int \check{K}(H\tau)|\tau|d\tau \cdot \mathrm{var}(f)\\
&\lesssim \frac{\mathrm{var}(f)}{H}.
\end{align*}
\end{proof}

Likewise, because $|\check{K}(x)||x|^2$ is integrable, and $|\check{K}(x)||x|\log(|x|+2)$ is of order $|\check{K}(x)||x|$ around $x=0$ and is bounded up to a constant by $|\check{K}(x)||x|^2$ otherwise, we have similarly,

\begin{lem}
\label{trunc2}
$$
\|f-\check{K}_H\ast f\|_{L^1(\log(|t|+2)dt)} \lesssim \frac{1}{H}\int_\mathbb{R}\log(|t|+2)|df(t)|.
$$
\end{lem}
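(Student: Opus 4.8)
The plan is to repeat the proof of Lemma~\ref{trunc} almost verbatim, carrying the weight $L(t) := \log(|t|+2)$ through the computation and exploiting the elementary bound $L(s-h)\le L(s)+L(h)$. First, since $\check K_H(x) = H\check K(Hx)$ has total mass $\int\check K_H = K(0) = 1$, I would write
$$
f(t) - \check K_H\ast f(t) = \int_\mathbb{R} H\check K(H\tau)\big[f(t)-f(t-\tau)\big]\,d\tau,
$$
so that by the triangle inequality for integrals
$$
\|f-\check K_H\ast f\|_{L^1(L\,dt)} \le \int_\mathbb{R} H|\check K(H\tau)|\Big(\int_\mathbb{R} L(t)\,|f(t)-f(t-\tau)|\,dt\Big)\,d\tau.
$$

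Next I would bound the inner integral. Bounding $|f(t)-f(t-\tau)|$ by the total variation of $f$ over the interval $[t-|\tau|,\,t+|\tau|]$, i.e.\ by $\int_{-|\tau|}^{|\tau|}|df(t+h)|\,dh$, then interchanging the order of integration and substituting $s = t+h$, the inner integral is at most
$$
\int_{-|\tau|}^{|\tau|}\int_\mathbb{R} L(s-h)\,|df(s)|\,ds\,dh.
$$
The key point is that $L(s-h) = \log(|s-h|+2)\le \log\big((|s|+2)(|h|+2)\big) = L(s)+L(h) \le L(s)+L(\tau)$ on the range $|h|\le|\tau|$; since $\int_{-|\tau|}^{|\tau|}dh = 2|\tau|$ and $\mathrm{var}(f) = \int|df| \le (\log 2)^{-1}\int L(s)\,|df(s)|$, this yields
$$
\int_\mathbb{R} L(t)\,|f(t)-f(t-\tau)|\,dt \lesssim |\tau|\,L(\tau)\int_\mathbb{R} L(s)\,|df(s)|.
$$

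Finally, substituting $u = H\tau$ and using that $H\ge 1$ in all our applications (where $H\to\infty$), so that $L(\tau) = L(u/H)\le L(u)$, I would conclude
$$
\int_\mathbb{R} H|\check K(H\tau)|\,|\tau|\,L(\tau)\,d\tau = \frac1H\int_\mathbb{R}|\check K(u)|\,|u|\,L(u/H)\,du \le \frac{C}{H},\qquad C := \int_\mathbb{R}|\check K(u)|\,|u|\log(|u|+2)\,du,
$$
and $C<\infty$ because $K$ is smooth with compact support, so $\check K$ decays faster than any polynomial (as observed in the remark preceding the lemma, $|u||\check K(u)|\log(|u|+2)$ is of the order of $|u||\check K(u)|$ near the origin and bounded by a constant times $|u|^2|\check K(u)|$ away from it). Chaining the three displays gives the claim. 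There is no real obstacle here, everything being routine; the one place requiring care is the bookkeeping in the second step, where the splitting $L(s-h)\le L(s)+L(h)$ followed by $L(h)\le L(\tau)$ is precisely what decouples the $s$-integral (producing the weighted variation on the right-hand side) from the $\tau$-integral (producing the factor $1/H$), so that the estimate factors cleanly.
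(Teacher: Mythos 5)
Your proof is correct and follows the same route the paper intends: it repeats the mollification estimate of Lemma~\ref{trunc}, carrying the weight $L(t)=\log(|t|+2)$ through, and uses the subadditivity $L(s-h)\le L(s)+L(h)$ together with the rapid decay of $\check K$ to conclude that $\int|\check K(u)|\,|u|\,L(u)\,du$ is finite. The paper's own proof of this lemma is essentially a one-line remark that the argument of Lemma~\ref{trunc} carries over since $|\check K(x)||x|\log(|x|+2)$ is integrable; you have simply filled in the bookkeeping.
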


Finally,
\begin{lem}
\label{maximal}
For $f$ of bounded variation, and any $\varepsilon > 0$,
$$
\sum_{k=-\infty}^\infty \varepsilon \|f\|_{L^\infty\big(\varepsilon[k-1/2,k+1/2)\big)} \lesssim \|f\|_{L^1} + \varepsilon\cdot \mathrm{var}(f) .
$$
\end{lem}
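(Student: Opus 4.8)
The plan is to prove the block-sup bound one interval at a time and then add up, using that the local supremum of a BV function is controlled by its local average plus its local oscillation. Write $I_k := \varepsilon[k-1/2,k+1/2)$, so that the $I_k$ tile $\mathbb{R}$. Passing to the precise (say, right-continuous) representative of $f$ — which changes neither $\|f\|_{L^1}$, nor $\mathrm{var}(f)$, nor any of the quantities $\|f\|_{L^\infty(I_k)}$ — I would note that for all $x,y \in I_k$,
$$
|f(x)| \le |f(y)| + |f(x)-f(y)| \le |f(y)| + \mathrm{var}(f;\overline{I_k}),
$$
since the oscillation of $f$ over $\overline{I_k}$ is at most its total variation there. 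Taking the supremum over $x\in I_k$ and then averaging the resulting inequality over $y\in I_k$ (an interval of length $\varepsilon$) gives
$$
\varepsilon\,\|f\|_{L^\infty(I_k)} \le \int_{I_k}|f(y)|\,dy + \varepsilon\,\mathrm{var}(f;\overline{I_k}).
$$

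Next I would sum over $k\in\mathbb{Z}$. The first term on the right telescopes to $\|f\|_{L^1}$ because the $I_k$ partition $\mathbb{R}$. For the second, I would invoke additivity of total variation over adjacent intervals: the closed blocks $\overline{I_k}$ meet only in isolated points, which carry no variation, so every finite partial sum $\sum_{|k|\le N}\mathrm{var}(f;\overline{I_k})$ equals $\mathrm{var}\big(f;[\varepsilon(-N-\tfrac{1}{2}),\varepsilon(N+\tfrac{1}{2})]\big)\le \mathrm{var}(f)$; hence $\sum_k \mathrm{var}(f;\overline{I_k})\le\mathrm{var}(f)$. Combining the two estimates yields exactly $\sum_k \varepsilon\|f\|_{L^\infty(I_k)} \le \|f\|_{L^1} + \varepsilon\,\mathrm{var}(f)$, i.e. the assertion with implied constant $1$ (the case $f\notin L^1$ being vacuous).

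There is no serious obstacle here; the only subtlety — a cosmetic one — is that the $L^\infty$ norms in the statement are essential suprema while the oscillation estimate is naturally phrased pointwise, and this is dispatched by fixing a good representative of $f$ at the very start. I would spell that reduction out in one sentence and leave the rest, which is just the tiling of $\mathbb{R}$ by the $I_k$ and the additivity of variation, as routine.
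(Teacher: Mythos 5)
Your proof is correct, and it takes a genuinely different (and in fact sharper) route than the paper's. The paper selects a near-maximizing point $x_k$ in each block, splits the resulting sum into even- and odd-indexed terms so that the weights $\varepsilon$ can be replaced by the gaps $x_{2j+2}-x_{2j}$, and then compares each half to $\int|f|$ as a Riemann sum, with the Riemann-sum error controlled by $\mathrm{var}(f)$; this yields the bound with constants ($2\|f\|_{L^1}+6\varepsilon\,\mathrm{var}(f)$, plus an arbitrarily small $\varepsilon'$). You instead localize entirely within each block via the inequality $\sup_{I_k}|f|\leq \varepsilon^{-1}\int_{I_k}|f|+\mathrm{var}(f;\overline{I_k})$ and then sum, using that the $I_k$ tile $\mathbb{R}$ and that total variation is additive over adjacent closed intervals. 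This avoids the even/odd bookkeeping altogether and gives the clean constant $1$. Your handling of the only delicate point — reconciling the essential supremum in the statement with the pointwise oscillation estimate by fixing the right-continuous representative (which preserves $\|f\|_{L^1}$ and does not increase $\mathrm{var}(f)$, and for which the pointwise and essential suprema over half-open blocks coincide) — is the right one; the paper silently treats $\|f\|_{L^\infty(I_k)}$ as a genuine supremum attained up to $\varepsilon'$, which is consistent with how the lemma is applied to $M_k\eta$ but is less carefully justified than your reduction. Either argument suffices for the application, where only the order of magnitude matters.
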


\begin{proof} For arbitrarily small $\varepsilon'$, we can choose $x_k \in \varepsilon[k-1/2,k+1/2)$ so that $|f(x_k)|$ is sufficiently close to $\|f\|_{L^\infty\big(\varepsilon[k-1/2,k+1/2)\big)}$ that
\begin{align*}
\sum_{k=-\infty}^\infty \varepsilon \|f\|_{L^\infty\big(\varepsilon[k-1/2,k+1/2)\big)} &\leq \varepsilon' + \varepsilon\sum_k |f(x_k)| \\
&\leq \varepsilon' + \sum_j (x_{2j+2}-x_{2j})|f(x_{2j})| + \sum_{j'}(x_{2j'+1}-x_{2j'-1})|f(x_{2j-1})|.
\end{align*}
More,
\begin{align*}
\bigg|\int |f| dx - \sum_j (x_{2j+2}-x_{2j})|f(x_{2j})|\bigg| &\leq \sum_j \int_{x_{2j}}^{x_{2j+2}}\Big||f(x)|-|f(x_{2j})|\Big|dx \\
&\leq \sum_j (x_{2j+2}-x_{2j})\int_{x_{2j}}^{x_{2j+2}} |df(x)|\\
&\leq 3\varepsilon\cdot \mathrm{var}(f)
\end{align*}
as $(x_{2j+2}-x_{2j}) \leq 3\varepsilon$ always. The same estimate holds for a sum over odd indices, and we have then
$$
\sum_k \varepsilon \|f\|_{L^\infty\big(\varepsilon[k-1/2,k+1/2)\big)} \leq \varepsilon' + 6\varepsilon\cdot \mathrm{var}(f) + 2 \int |f| dx.
$$
As $\varepsilon'$ was arbitrary, the lemma follows.
\end{proof}

Making use of these lemmas we have that
$$
\int_{-\infty}^\infty M_{k/n(T)}(\eta-\check{K}_{n(T)}
 \ast\eta)\big(\xi\big)d\xi \lesssim_{\eta,k} \frac{1}{n(T)},
$$
and
$$
\int_{-\infty}^\infty M_{k/n(T)}\big[(\eta-\check{K}_{n(T)}
 \ast\eta)\cdot L\big]\big(\xi\big)d\xi \lesssim_{\eta,k} \frac{1}{n(T)}.
$$

Hence \eqref{i} is bounded.  By Lemma \ref{X11}, with the averages over $t$ with respect to $\sigma$ playing the role of the positive measure $\mu$,
\begin{align}
\label{errorbound}
E_T \lesssim_{\eta,k}& \Bigg(\int_\mathbb{R}\frac{\sigma(t/T)}{T}\bigg|\int_{-\infty}^\infty \eta\big(\tfrac{\log T}{2\pi n(T)}(\xi-t)\big) dS(\xi)\bigg|^k dt\Bigg)^{(k-1)/k} \\
&+ \Bigg(\int_\mathbb{R}\frac{\sigma(t/T)}{T} \bigg|\,-\!\!\!\!\!\!\int_{-\infty}^\infty \check{K}_{n(T)}\ast\eta\big(\tfrac{\log T}{2\pi n(T)}(\xi-t)\big) dS(\xi)\bigg|^k dt\Bigg)^{(k-1)/k}.\notag
\end{align}
For $k$ even, this implies by Lemma \ref{9} (our Fourier truncation central limit theorem), and the fact that $\int |x||\hat{\eta}|^2 dx = +\infty$,
\begin{align*}
\int_\mathbb{R}\frac{\sigma(t/T)}{T}\bigg[\int_{-\infty}^\infty \eta\big(\tfrac{\log T}{2\pi n(T)}(\xi-t)\big) dS(\xi)\bigg]^k dt &= (c_k+o(1)) \Big[\int_{-n(T)}^{n(T)}|x||\hat{\eta}(x)|^2\,dx\Big]^{k/2} \\
&+ O\Bigg[\Bigg(\int_\mathbb{R}\frac{\sigma(t/T)}{T}\bigg|\int_{-\infty}^\infty \eta\big(\tfrac{\log T}{2\pi n(T)}(\xi-t)\big) dS(\xi)\bigg|^k dt\Bigg)^{(k-1)/k}\Bigg]
\end{align*}
This bound implies the left hand side diverges, and thus the conclusion of Theorem \ref{X} for even $k$. For odd $k$, by H\"{o}lder (or Cauchy-Schwartz) and the result we have just proved for even $k$,
\begin{equation}
\label{oddbound}
\int_\mathbb{R}\frac{\sigma(t/T)}{T}\bigg|\int_{-\infty}^\infty \eta\big(\tfrac{\log T}{2\pi n(T)}(\xi-t)\big) dS(\xi)\bigg|^k dt \leq (\sqrt{c_{2k}}+o(1)) \Big[\int_{-n(T)}^{n(T)}|x||\hat{\eta}(x)|^2\,dx\Big]^{k/2},
\end{equation}
and hence, using \eqref{errorbound} again, Theorem \ref{X} for odd $k$ as well.
\end{proof}

\begin{proof}[Proof of Theorem \ref{smoothedmeso}]
To see that Theorem \ref{X} implies Theorem \ref{smoothedmeso}, note that for any $\epsilon > 0$, we can find $\sigma_1$ of the sort delimited in Theorem \ref{X}, so that $\|\mathbf{1}_{[1,2]}-\sigma_1\|_{L^1} < \epsilon/2$. Further, we can find $\sigma_2$, a linear combination of translations and dilations of the function $\big(\tfrac{\sin \pi t}{\pi t}\big)^2$, so that $\sigma_2$ is non-negative and $|\mathbf{1}_{[1,2]}(t)-\sigma_1(t)|\leq \sigma_2(t)$ for all $t$, and $\|\sigma_2\|_{L^1} < \epsilon$. Note (for simplicity of notation) that \eqref{oddbound} is true for even $k$ as well, and by rescaling linearly, we have
$$
\int_\mathbb{R}\frac{\sigma_2(t/T)}{T}\bigg|\int_{-\infty}^\infty \eta\big(\tfrac{\log T}{2\pi n(T)}(\xi-t)\big) dS(\xi)\bigg|^k dt \leq \epsilon(\sqrt{c_{2k}}+o(1)) \Big[\int_{-n(T)}^{n(T)}|x||\hat{\eta}(x)|^2\,dx\Big]^{k/2}.
$$
Then
$$
\int_\mathbb{R}\frac{\mathbf{1}_{[1,2]}(t/T)}{T}\bigg[\int_{-\infty}^\infty \eta\big(\tfrac{\log T}{2\pi n(T)}(\xi-t)\big) dS(\xi)\bigg]^k dt = [c_k + o(1) + \epsilon\cdot(O_k(1)+o(1))] \Big[\int_{-n(T)}^{n(T)}|x||\hat{\eta}(x)|^2\,dx\Big]^{k/2}.
$$
(Note that here the $O_k(1)$ term is bounded absolutely by $\sqrt{c_{2k}}$.) As $\epsilon$ is arbitrary, the theorem follows.
\end{proof}

\begin{proof}[Proof of Theorem \ref{generalmesoconv}]
A proof will follow almost exactly as before. We need only to show that Theorem \ref{smoothedmeso} is true for $\eta$ instead of the sort delimited in Theorem \ref{generalmesoconv}. The reader may check that the only part of the proof which requires modification is that the error term $E_T$, at the start of section 4, cannot be shown to be asymptotically negligible in the same way as before, since now asymptotically negligible means that $E_T = o(1)$. But using Lemma \ref{X11} in the same way as before, this will be the case, and therefore the theorem, so long as
\begin{equation}
\label{fourierdecay}
\|\eta-\check{K}_H\ast\eta\|_{L^1} = o(1/H),
\end{equation}
for some $K$ as above. This is certainly the case for those $\eta$ delimited in Theorem \ref{generalmeso}, using the fact that for such $\eta$, $\hat{\eta}(\xi) =  o(1/(1+|\xi|)^2)$.
\end{proof}

\textit{Remark:} \eqref{fourierdecay} is true for a wider range of functions than $\mathit{C}^2_{c}(\mathbb{R})$; but it does not encompass the elegant criterion, ``all functions which are of bounded variation and compactly supported." It is \emph{not} the case for $\eta$ a Cantor function, for instance. We expect the theorem to remain true in this case, but to prove this would seem to require upper bounds on correlation functions for zeta zeroes with respect to oscillatory functions, extending outside the range of functions considered by Rudnick and Sarnak. Although here we require only upper bounds, not exact evaluations, this still goes beyond what we currently seem able to prove.

\section{Acknowledgements}

I would like to thank Rowan Killip, Zeev Rudnick, and Terence Tao for a number of helpful and encouraging exchanges, and additionally anonymous referees for helpful comments and criticism.

\section{Appendix: Towards a Mesoscopic Theory}

We include in this appendix a more general discussion of the statistics of the zeroes of the zeta function in the mesoscopic regime. Our discussion will culminate in Theorem \ref{a6}, a statement from which one can deduce both the microscopic linear statistics of the sort considered by Rudnick and Sarnak and the central limit theorems discussed above, along with covariance statements for translated linear statistics separated by mesoscopic distances. Other theorems concerning the mesoscopic distribution of zeta zeroes, which also depend upon the macroscopic statistics of the zeroes, can be found in \cite{Bou} and \cite{Ka}.

To motivate what follows, we want to show first that Corollary \ref{consequence} implies the well-known result of Rudnick and Sarnak that, upon ordering the positive ordinates of the zeroes $0 < \gamma_1 \leq \gamma_2 \leq ...$,

\begin{thm}[Rudnick-Sarnak]
\label{a1}
For $\eta: \mathbb{R}^k \rightarrow \mathbb{R}$ such that $\mathrm{supp}\, \hat{\eta} \subseteq \{x\in \mathbb{R}^k: |x_1| + \cdot\cdot\cdot + |x_k| < 2\}$,
$$
\lim_{T\rightarrow\infty} \frac{1}{T}\int_T^{2T}\sum_{\substack{i_1,...,i_k\\\mathrm{distinct}}}\eta\Big(\tfrac{\log T}{2\pi}(\gamma_{i_1}-t),...,\tfrac{\log T}{2\pi}(\gamma_{i_k}-t)\Big)\,dt = \int_{\mathbb{R}^k}\eta(x) \det_{k\times k}[S(x_i-x_j)]\, d^k x,
$$
where $S(\xi) = \tfrac{\sin \pi \xi}{\pi \xi}$ and $\det_{k\times k}[S(\xi_i-\xi_j)]$ is a $k\times k$ determinant with $ij^{\textrm{th}}$ entry $S(\xi_i-\xi_j)$.
\end{thm}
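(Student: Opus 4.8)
The plan is to derive Theorem~\ref{a1} from Corollary~\ref{consequence} specialised to $n(T)\equiv 1$: then $H=\tfrac{\log T}{n(T)}=\log T\to\infty$, so every error term in that corollary is $o(1)$, and it supplies, in the limit, all the mixed moments of the linear statistics $\int\eta_\ell\big(\tfrac{\log T}{2\pi}(\xi-t)\big)\,dS(\xi)$. The remaining task is combinatorial: to rewrite the distinct-index sum of Theorem~\ref{a1} in terms of such moments (together with explicit lower-order pieces coming from the smooth part $\tfrac{\Omega}{2\pi}$ of the counting density), and then to recognise the resulting expression as the Fourier transform of $\int_{\mathbb R^k}\eta(x)\det_{k\times k}[S(x_i-x_j)]\,d^kx$. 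By Fourier inversion and multilinearity it suffices to treat product test functions $\eta(x)=\prod_{\ell=1}^k\eta_\ell(x_\ell)$ with each $\eta_\ell$ Schwartz, $\mathrm{supp}\,\hat\eta_\ell\subseteq[-\delta_\ell,\delta_\ell]$, and $\Delta:=\delta_1+\cdots+\delta_k<2$; the passage from finite sums of these to a general $\eta$ with $\mathrm{supp}\,\hat\eta\subseteq\{\|x\|_1<2\}$ is a density argument, the required uniform-in-$T$ equicontinuity being exactly the content of the upper bound Theorem~\ref{10} (combined with H\"older to reduce to a single power). At the same time, as in the proof of Theorem~\ref{smoothedmeso}, one may replace $\tfrac1T\int_T^{2T}$ by a smooth average $\int\sigma(t/T)/T$ admissible in Corollary~\ref{consequence}, the difference being controlled again by Theorem~\ref{10}.

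For product $\eta$ I would expand the distinct sum over set partitions $P$ of $[k]$,
$$
\sum_{\substack{i_1,\dots,i_k\\ \text{distinct}}}\prod_{\ell=1}^{k}\eta_\ell\big(\tfrac{\log T}{2\pi}(\gamma_{i_\ell}-t)\big)=\sum_{P\vdash[k]}\Big(\prod_{B\in P}(-1)^{|B|-1}(|B|-1)!\Big)\prod_{B\in P}\Sigma_B(t),
$$
where $\Sigma_B(t):=\sum_\gamma g_B\big(\tfrac{\log T}{2\pi}(\gamma-t)\big)$ with $g_B:=\prod_{\ell\in B}\eta_\ell$, so $\widehat{g_B}=\hat\eta_{\ell_1}\ast\cdots\ast\hat\eta_{\ell_m}$ is supported in $[-\Delta_B,\Delta_B]$, $\Delta_B:=\sum_{\ell\in B}\delta_\ell$, and $\sum_{B\in P}\Delta_B=\Delta<2$. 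Using $\tilde d(\xi)\,d\xi=dS(\xi)+\tfrac{\Omega(\xi)}{2\pi}\,d\xi$ and Stirling in the form $\tfrac{\Omega(\xi)}{2\pi}=\tfrac{\log t}{2\pi}+O(1)$ for $\xi$ near $t$, together with $\log t=\log T+O(1)$ on $[T,2T]$ and the rapid decay of $g_B$, one gets, uniformly in $t$,
$$
\Sigma_B(t)=\int g_B\big(\tfrac{\log T}{2\pi}(\xi-t)\big)\,dS(\xi)+\widehat{g_B}(0)+O\big(\tfrac1{\log T}\big).
$$
Substituting into the partition expansion, multiplying out, and averaging against $\sigma(t/T)/T$, every resulting term is a product of constants $\prod_{B\notin Q}\widehat{g_B}(0)$ times a mixed moment $\int\tfrac{\sigma(t/T)}{T}\prod_{B\in Q}\big(\int g_B(\tfrac{\log T}{2\pi}(\xi-t))\,dS(\xi)\big)\,dt$ over a subfamily $Q$ of blocks (the $O(1/\log T)$ errors being absorbed after a Cauchy--Schwarz bound on the $L^k$-norms of the $dS$-statistics, which Corollary~\ref{consequence} makes $O(1)$). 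Since $\sum_{B\in Q}\Delta_B<2$, Corollary~\ref{consequence} evaluates each such moment, in the limit, as $\sum_{\rho}\prod_{\{B,B'\}\in\rho}\int_{\mathbb R}|x|\,\widehat{g_B}(x)\widehat{g_{B'}}(-x)\,dx$, the sum over perfect matchings $\rho$ of $Q$ (zero if $|Q|$ is odd), its lower-order $S_J$-terms vanishing because their prefactors are negative powers of $\log T$. Hence the left side of Theorem~\ref{a1} tends to a fixed finite combination of products of the numbers $\widehat{g_B}(0)$ and $\int|x|\widehat{g_B}(x)\widehat{g_{B'}}(-x)\,dx$.

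It then remains to identify this combination with $\int_{\mathbb R^k}\prod_\ell\eta_\ell(x_\ell)\det_{k\times k}[S(x_i-x_j)]\,d^kx$. Expanding the determinant as $\sum_{\pi\in S_k}\operatorname{sgn}(\pi)\prod_i S(x_i-x_{\pi(i)})$, the integral factors over the cycles of $\pi$; a cycle $(\ell_1\cdots\ell_m)$ contributes $\int\prod_j\eta_{\ell_j}(x_j)\,S(x_1-x_2)\cdots S(x_m-x_1)\,d^mx$, which in Fourier variables is a nested convolution of copies of $\widehat S=\mathbf{1}_{[-1/2,1/2]}$ against $\hat\eta_{\ell_1},\dots,\hat\eta_{\ell_m}$, sampled — because $\sum_j\delta_{\ell_j}\le\Delta<2$ — only where these convolutions coincide with their elementary closed forms ($\widehat S(0)=1$, $\widehat{S^2}(\xi)=(1-|\xi|)_+$, and so on). Re-summing over $\pi$ by cycle type then reproduces precisely the constant-times-matching structure found above; the cases $k=1,2$ are the checks $\int\eta\,S(0)\,dx=\hat\eta(0)$ and $\int\eta_1\eta_2\big(1-S(x_1-x_2)^2\big)\,dx=\hat\eta_1(0)\hat\eta_2(0)-\int(1-|x|)_+\hat\eta_1(x)\hat\eta_2(-x)\,dx$, which match the outputs of the previous paragraph.

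The main obstacle is exactly this last step: verifying cleanly, for every $k$, that the signed sum over permutations of the cycle integrals of $\det[S(x_i-x_j)]$ collapses onto the partition-and-matching expression delivered by Corollary~\ref{consequence} (equivalently, that the only nonvanishing joint cumulant of the $dS$-statistics is the second one, with covariance $\int|x|\hat\eta_i(x)\hat\eta_j(-x)\,dx$). This is where the hypothesis $\mathrm{supp}\,\hat\eta\subseteq\{\|x\|_1<2\}$ is indispensable on both sides: it forces every block width $\Delta_B$ to be $<2$, so that each $\Sigma_B$ is governed by the explicit formula (hence by Corollary~\ref{consequence}) and, simultaneously, every convolution of copies of $\widehat S$ arising from the determinant is only ever evaluated in the range where it agrees with its naive formula. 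The arithmetic input — the prime number theorem, entering through Lemmas~\ref{pnt} and~\ref{eval} — is entirely contained in Corollary~\ref{consequence} and needs no revisiting; what is left is bookkeeping in the partition lattice together with a Fourier computation for the sine kernel.
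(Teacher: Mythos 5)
Your arithmetic half coincides with the paper's: both specialise Corollary \ref{consequence} to $n(T)=1$, convert between $\tilde d$ and $dS$ via Stirling (your expansion $\Sigma_B(t)=\int g_B\,dS+\widehat{g_B}(0)+O(1/\log T)$ is exactly the paper's step), reduce to product test functions, and handle general $\eta$ and the passage from $\sigma$ to $\mathbf{1}_{[1,2]}$ with Theorem \ref{10}. Where you diverge is the identification of the resulting matching-plus-constants expression with $\int\eta\det_{k\times k}[S(x_i-x_j)]$: you propose to expand the determinant over permutations, factor over cycles, and compute the nested convolutions of $\widehat S=\mathbf{1}_{[-1/2,1/2]}$ directly in the band-limited range — and you candidly flag this as the main unresolved obstacle, verifying it only for $k\le 2$. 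The paper never performs that computation. Instead it proves (Theorem \ref{a4}) that the sine-kernel process $\mathcal S$ itself has centred mixed moments equal to $S_{[k]}$ for band-limited test functions, by realising $\mathcal S$ as the $n\to\infty$ limit of rescaled CUE eigenvalues (Proposition \ref{a3}), applying Poisson summation, and invoking the Diaconis--Shahshahani moment formula for $\prod_\ell\Tr(g^{j_\ell})$; the purely formal equivalence of Corollary \ref{a5} (your partition expansion, run in reverse) then converts "same centred moments as $\mathcal S$" into "same correlation functions as $\mathcal S$", which is the determinantal formula. In other words, the random-matrix detour exists precisely to avoid the combinatorial identity you leave unproven — that identity is essentially Soshnikov's "related combinatorial identities," and carrying it out by hand for all $k$ is the one genuinely nontrivial piece missing from your write-up. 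Your route is viable (it is closer in spirit to the original Rudnick--Sarnak sieving), but as it stands the proof is incomplete at exactly the step you identify; importing Theorem \ref{a4} and Corollary \ref{a5} would close it.
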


That is to say, with respect to sufficiently smooth functions, the zeroes of the zeta function tend weakly to a determinantal point process with sine-kernel.

One may do this either through a combinatorial sieving procedure -- effectively this is the proof of Rudnick and Sarnak -- or alternatively one may use the combinatorics of Diaconis and Shahshahani. For us, it will be more enlightening to use the latter. Proceeding in this manner originated with Hughes and Rudnick, although our range of test functions will coincide with the slightly wider range used originally by Rudnick and Sarnak.

The theorem of Diaconis and Shahshahani we will need is

\begin{thm}[Diaconis-Shahshahani]
\label{a2}
Let $\mathcal{U}(n)$ be the set of $n\times n$ unitary matrices endowed with Haar measure. Consider $a = (a_1,...,a_k)$ and $b = (b_1,...,b_k)$ with $a_1,a_2,...,b_1,b_2,... \in \{0,1,...\}$.If $\sum_{j=1}^k j a_j \neq \sum_{j=1}^k j b_j$,
\begin{equation}
\label{aa1a}
\int_{\mathcal{U}(n)}\prod_{j=1}^k \Tr(g^j)^{a_j}\overline{\Tr(g^j)^{b_j}}\,dg = 0.
\end{equation}
Furthermore, in the case that
$$
\max\Bigg(\sum_{j=1}^k j a_j, \sum_{j=1}^k j b_j\Bigg) \leq n
$$
we have
\begin{equation}
\label{aa1}
\int_{\mathcal{U}(n)}\prod_{j=1}^k \Tr(g^j)^{a_j}\overline{\Tr(g^j)^{b_j}}\,dg = \delta_{ab}\prod_{j=1}^k j^{a_j} a_j!
\end{equation}
\end{thm}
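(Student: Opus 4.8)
The statement to prove is the Diaconis--Shahshahani moment formula for traces of powers of Haar-random unitary matrices. Here is how I would approach it.

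\textbf{Overall strategy.}
The plan is to reduce everything to the computation of $\int_{\mathcal{U}(n)} p_\lambda(g)\,\overline{p_\mu(g)}\,dg$, where $p_\lambda$ denotes the power-sum symmetric function in the eigenvalues of $g$ (so that $p_j(g) = \Tr(g^j)$, and $\prod_j \Tr(g^j)^{a_j} = p_\lambda(g)$ for the partition $\lambda$ with $a_j$ parts equal to $j$). The key input is the Schur--Weyl / character-theoretic expansion of power sums into Schur functions, $p_\lambda = \sum_{\nu} \chi^\nu_\lambda\, s_\nu$, the sum over partitions $\nu$ of $|\lambda| = \sum_j j a_j$, where $\chi^\nu_\lambda$ is the value of the irreducible $S_{|\lambda|}$-character indexed by $\nu$ on the conjugacy class of cycle type $\lambda$. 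The second essential fact is that, evaluated on $g \in \mathcal{U}(n)$, the Schur function $s_\nu(g)$ is either the character of the irreducible polynomial representation of $GL_n$ (equivalently $\mathcal{U}(n)$) with highest weight $\nu$, when $\nu$ has at most $n$ rows, or identically zero when $\nu$ has more than $n$ rows. Combined with Schur orthogonality of irreducible characters over $\mathcal{U}(n)$, $\int_{\mathcal{U}(n)} s_\nu(g)\,\overline{s_{\nu'}(g)}\,dg = \delta_{\nu\nu'}$ (for $\nu,\nu'$ with at most $n$ rows), this will give the result.

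\textbf{Step-by-step.}
First I would set $\lambda$ and $\mu$ to be the partitions determined by $a$ and $b$, so that the integral in question is $\int p_\lambda \overline{p_\mu}\,dg$. Expand both power sums into Schur functions: $p_\lambda = \sum_{\nu \vdash |\lambda|} \chi^\nu_\lambda s_\nu$ and $p_\mu = \sum_{\nu' \vdash |\mu|} \chi^{\nu'}_\mu s_{\nu'}$. If $|\lambda| \neq |\mu|$, i.e.\ $\sum_j j a_j \neq \sum_j j b_j$, then no partition $\nu$ appears in both expansions, so after integrating term by term and using orthogonality of Schur functions over $\mathcal{U}(n)$ (noting that cross terms with one of $\nu,\nu'$ having more than $n$ rows vanish because $s_\nu(g) \equiv 0$ there, and the remaining terms vanish by $\delta_{\nu\nu'}$ since $\nu\neq\nu'$), the whole integral is $0$. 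This gives \eqref{aa1a}. Next, for the case $|\lambda| = |\mu| =: m$ with $m \leq n$: every $\nu \vdash m$ has at most $m \leq n$ rows, so \emph{all} Schur functions appearing are genuine irreducible $\mathcal{U}(n)$-characters and none are killed. Orthogonality then collapses the double sum to $\sum_{\nu \vdash m} \chi^\nu_\lambda \overline{\chi^\nu_\mu} = \sum_{\nu\vdash m}\chi^\nu_\lambda\chi^\nu_\mu$ (the characters are real, indeed rational integers). By the column orthogonality relations for the symmetric group $S_m$, this equals $\delta_{\lambda\mu}\cdot |Z_{S_m}(\lambda)|$, the size of the centralizer of a permutation of cycle type $\lambda$, which is exactly $\prod_j j^{a_j} a_j!$. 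Since $\lambda = \mu$ as partitions is equivalent to $a = b$ as exponent vectors, this is precisely \eqref{aa1}. Finally I would remark that the hypothesis in the theorem is the slightly weaker $\max(|\lambda|,|\mu|)\leq n$ rather than $|\lambda|+|\mu|\le n$ or similar; but when $|\lambda|\neq|\mu|$ we are already in case \eqref{aa1a}, and when $|\lambda|=|\mu|$ the condition $\max(|\lambda|,|\mu|) = m \le n$ is exactly what was used above, so the stated hypothesis suffices.

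\textbf{Main obstacle.}
The computation itself is short once the representation-theoretic dictionary is in place; the only real subtlety — and the step I would be most careful about — is the truncation phenomenon: for $\nu$ with more than $n$ rows, $s_\nu$ does \emph{not} vanish as a symmetric function but does vanish when specialized to $n$ eigenvalues, and one must check that this never causes a spurious contribution. In case \eqref{aa1a} this is harmless (vanishing terms only help), and in case \eqref{aa1} the hypothesis $m \le n$ guarantees the issue does not arise at all, so the bookkeeping is clean. An alternative, more self-contained route — avoiding symmetric function theory — would be to expand each $\Tr(g^j) = \sum_{i} e^{ij\theta_i}$ over the eigenangles, integrate against the Weyl integration formula density $\propto \prod_{i<i'}|e^{i\theta_i} - e^{i\theta_{i'}}|^2$, and identify the resulting integral combinatorially (this is close in spirit to the Diaconis--Evans viewpoint mentioned in the introduction); but the Schur-function argument above is cleaner and I would present that.
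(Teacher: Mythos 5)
Your proof is correct. Note that the paper does not actually prove Theorem \ref{a2}; it simply cites \cite{DiEv} and \cite{Bu} for the proof. The argument you give --- expand $\prod_j \Tr(g^j)^{a_j} = p_\lambda(g)$ into Schur functions via the Frobenius character expansion $p_\lambda = \sum_\nu \chi^\nu_\lambda s_\nu$, observe that $s_\nu$ specialized to $n$ eigenvalues either is an irreducible $\mathcal{U}(n)$-character (at most $n$ rows) or vanishes identically (more than $n$ rows), apply Schur orthogonality, and then collapse $\sum_{\nu\vdash m}\chi^\nu_\lambda\chi^\nu_\mu$ to the centralizer order $\delta_{\lambda\mu}\prod_j j^{a_j}a_j!$ by column orthogonality for $S_m$ --- is precisely the standard proof appearing in those references, and your handling of the truncation issue and of the hypothesis $\max(|\lambda|,|\mu|)\leq n$ is the right bookkeeping. (As a minor aside, the vanishing statement \eqref{aa1a} also follows immediately, without any symmetric function theory, from invariance of Haar measure under $g\mapsto zg$ for $|z|=1$, which multiplies the integrand by $z^{\sum j a_j - \sum j b_j}$.)
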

In addition, for unrestricted $a$
$$
\int_{\mathcal{U}(n)}\prod_{j=1}^k\big|\Tr(g^j)\big|^{2a_j}\,dg \leq \prod_{j=1}^k j^{a_j}a_j!
$$
but we will not need this fact. In general, for products of traces outside of the restricted range of the theorem, no pattern emerges which is as nice as \eqref{aa1}. Since our restricted range here corresponds -- as we will show shortly -- to the only range of test functions for which the statistics of the zeta function's zeroes can be rigorously evaluated, this fact must be seen as somewhat curious.

Here trace is defined in the standard way, so that $\Tr(I_{n\times n}) = n$. For a proof of Theorem \ref{a2}, see \cite{DiEv} or \cite{Bu}.

It is a simple exercise in enumerative combinatorics to see that \eqref{aa1a} and \eqref{aa1} imply that for $|j_1|+\cdot\cdot\cdot+|j_k| \leq 2n$
$$
\int_{\mathcal{U}(n)}\prod_{\ell=1}^k \Tr(g^{j_\ell})\,dg = \sum \prod_\lambda |j_{\mu_\lambda}|\, \delta(j_{\mu_\lambda}=-j_{\nu_\lambda})
$$
where once again the sum is over all partitions of $[k]$ into disjoint pairs $\{\mu_\lambda,\nu_\lambda\}$, and $\delta(j_{\mu_\lambda}=-j_{\nu_\lambda})$ is $1$ or $0$ depending upon whether $j_{\mu_\lambda}=-j_{\nu_\lambda}$ or not.

We are able to use this to study the determinantal point process with sine kernel because the eigenvalues of a random unitary matrix, properly spaced, are themselves a determinantal point process with kernel tending to that of the sine kernel. This is due, in effect, to Weyl.

\begin{prop}
\label{a3}
Let $\{e(\theta_1), e(\theta_2),...,e(\theta_n)\}$ be the eigenvalues of a random unitary matrix, distributed according to Haar measure, with $\theta_i \in [-1/2,1/2)$ for all $i$. Then the points $\{n\theta_1,...,n\theta_n\}$ comprise a determinantal point process $\mathcal{S}_n$ on $[-n/2,m/2)$ with kernel in $x,y$ given by $S_n(x-y) = \tfrac{\sin \pi (x-y)}{n \sin (\pi (x-y)/n)}$. That is for any test function $\eta$,
\begin{align*}
\E_{\mathcal{S}_n} \sum_{\substack{i_1,...,i_k\\\mathrm{distinct}}} \eta(\xi_{i_1},...,\xi_{i_k}) &= \int_{\mathcal{U}(n)}\sum_{\substack{i_1,...,i_k\\\mathrm{distinct}}} \eta(n\theta_{i_1},...,n\theta_{i_k}) \,dg \\
&=\int_{[-n/2,n/2]^k}\eta(x_1,...,x_k) \det_{k\times k}[S_n(x_i-x_j)]\,d^k x
\end{align*}
\end{prop}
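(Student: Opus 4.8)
The plan is to derive this from the Weyl integration formula together with the general fact that a squared-Vandermonde density is determinantal with a projection kernel. First I would record that, writing the eigenvalues of a Haar-random $g\in\mathcal U(n)$ as $\{e(\theta_1),\dots,e(\theta_n)\}$ with $\theta_i\in[-1/2,1/2)$, the joint density of the unordered set of angles is $\tfrac1{n!}\prod_{1\le j<l\le n}|e(\theta_j)-e(\theta_l)|^2$. Expanding the Vandermonde $\prod_{j<l}(e(\theta_l)-e(\theta_j))$ as $\det[e((m-1)\theta_j)]_{1\le m,j\le n}$ and multiplying this determinant by its complex conjugate via the Cauchy--Binet (Andr\'eief) identity gives
$$
\prod_{1\le j<l\le n}\big|e(\theta_j)-e(\theta_l)\big|^2=\det_{n\times n}\big[K_n(\theta_i,\theta_j)\big],\qquad K_n(\theta,\phi):=\sum_{m=0}^{n-1}e(m\theta)\overline{e(m\phi)}.
$$
The reason for writing $K_n$ this way is that it is exactly the kernel of the orthogonal projection of $L^2([-1/2,1/2))$ onto $\mathrm{span}\{e(m\,\cdot):0\le m<n\}$: it is reproducing, $\int_{-1/2}^{1/2}K_n(\theta,\psi)K_n(\psi,\phi)\,d\psi=K_n(\theta,\phi)$, with $\int_{-1/2}^{1/2}K_n(\theta,\theta)\,d\theta=n$. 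Summing the geometric series shows $K_n(\theta,\phi)=e^{\pi i(n-1)(\theta-\phi)}\tfrac{\sin\pi n(\theta-\phi)}{\sin\pi(\theta-\phi)}$, and since the phase $e^{\pi i(n-1)\theta_i}e^{-\pi i(n-1)\theta_j}$ factors out of the rows and columns of every determinant $\det[K_n(\theta_i,\theta_j)]$, it may freely be replaced by $\tfrac{\sin\pi n(\theta-\phi)}{\sin\pi(\theta-\phi)}$ throughout.

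The next step is the standard lemma on integrating out variables from a projection-kernel determinant (the Dyson--Mehta lemma; see e.g.\ \cite{HoKrPeVi}): if a symmetric probability density on $X^n$ has the form $\tfrac1{n!}\det[K(x_i,x_j)]$ with $K$ Hermitian satisfying $\int K(x,z)K(z,y)\,dz=K(x,y)$ and $\int K(x,x)\,dx=n$, then its $k$-point correlation functions are $\rho_k(x_1,\dots,x_k)=\det_{k\times k}[K(x_i,x_j)]$. (Taking $k=0$ also re-verifies $\int\det_{n\times n}[K_n]\,d^n\theta=n!$, so the normalization is consistent.) Applied with $K=K_n$ this gives, for every test function $f$ on $\mathbb R^k$,
$$
\int_{\mathcal U(n)}\sum_{\substack{i_1,\dots,i_k\\\text{distinct}}}f(\theta_{i_1},\dots,\theta_{i_k})\,dg=\int_{[-1/2,1/2)^k}f(\theta_1,\dots,\theta_k)\det_{k\times k}\Big[\tfrac{\sin\pi n(\theta_i-\theta_j)}{\sin\pi(\theta_i-\theta_j)}\Big]\,d^k\theta.
$$
Now I would apply this with $f=\eta(n\,\cdot)$ and substitute $x_i=n\theta_i$: the Jacobian $n^{-k}$ is absorbed one factor of $1/n$ per row of the determinant, and $\tfrac1n\cdot\tfrac{\sin\pi n((x_i-x_j)/n)}{\sin\pi((x_i-x_j)/n)}=\tfrac{\sin\pi(x_i-x_j)}{n\sin(\pi(x_i-x_j)/n)}=S_n(x_i-x_j)$, so the right-hand side becomes $\int_{[-n/2,n/2)^k}\eta(x_1,\dots,x_k)\det_{k\times k}[S_n(x_i-x_j)]\,d^kx$. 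Since the left-hand side is by definition $\E_{\mathcal S_n}\sum_{\text{distinct}}\eta(\xi_{i_1},\dots,\xi_{i_k})$, this is the assertion.

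The only genuinely substantive ingredient is the Dyson--Mehta integrating-out lemma, which I would cite rather than reprove, noting merely that its proof expands the $n\times n$ determinant along its last $n-k$ rows and repeatedly uses $\int K(\cdot,\psi)K(\psi,\cdot)\,d\psi=K$ together with $\int K(\psi,\psi)\,d\psi=n$ to collapse the integrals, by a short induction on $n-k$. Everything else --- the Weyl integration formula, the Andr\'eief identity, the phase cancellation, and the rescaling --- is bookkeeping; the one point to watch is that the exponentials $e(m\,\cdot)$ really are orthonormal on the precise interval $[-1/2,1/2)$, which is exactly what makes both the normalization and the lemma apply verbatim and what pins the support of the rescaled process down to $[-n/2,n/2)$.
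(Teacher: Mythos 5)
Your proof is correct and is the standard derivation: Weyl integration formula, the squared Vandermonde rewritten as $\det[K_n]$ for the rank-$n$ projection kernel, the Gaudin/Dyson--Mehta integrating-out lemma to get $\rho_k=\det_{k\times k}[K_n]$, and the rescaling $x=n\theta$ which converts $\tfrac1n K_n$ into $S_n$. The paper itself offers no proof of Proposition \ref{a3}, attributing it ``in effect, to Weyl'' and pointing to \cite{Co}; your argument is exactly the one those references supply, so there is nothing to compare beyond noting that you have filled in the standard details correctly.
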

For further discussion see \cite{Co}.

We use this to prove

\begin{thm}
\label{a4}
If $\mathcal{S}$ is the determinantal point process with kernel in $x,y$ given by $S(x-y)$ for $S(x) = \tfrac{\sin \pi x}{\pi x}$, then for functions $\eta_1, ..., \eta_k$ such that, as in Lemma \ref{X7}, $\mathrm{supp }\, \eta_\ell \in [-\delta_\ell,\delta_\ell]$ with $\delta_1 + \cdot\cdot\cdot + \delta_k \leq 2$,
\begin{equation}
\label{aeq1}
\E_\mathcal{S}\prod_{\ell=1}^k\Big(\Delta_{\eta_\ell}-\E_\mathcal{S}\Delta_{\eta_\ell}\Big) = S_{[k]}
\end{equation}
where $S_{[k]}$ is defined as in Corollary \ref{consequence}, and here $\Delta_\eta = \sum \eta(\xi_i)$ as before, for $\{\xi_i\}$ the points of the process.
\end{thm}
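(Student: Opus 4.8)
The plan is to transfer the computation to the eigenvalues of a random unitary matrix, where the pairing combinatorics of Theorem~\ref{a2} is exact, and then to let $n\to\infty$ via Proposition~\ref{a3}. First I would record the reduction to $\mathcal U(n)$: the kernels $S_n(x-y)=\tfrac{\sin\pi(x-y)}{n\sin(\pi(x-y)/n)}$ converge to $S(x-y)$ locally uniformly, with a uniform Hadamard bound on the associated correlation determinants, so that on expanding $\prod_\ell(\Delta_{\eta_\ell}-\E\Delta_{\eta_\ell})$ into a finite combination of products of uncentered linear statistics, and then each such product into integrals of the $\eta_\ell$ against the $m$-point intensities, the decay of the $\eta_\ell$ and dominated convergence give
$$
\E_{\mathcal S}\prod_{\ell=1}^k\big(\Delta_{\eta_\ell}-\E_{\mathcal S}\Delta_{\eta_\ell}\big)=\lim_{n\to\infty}\E_{\mathcal S_n}\prod_{\ell=1}^k\big(\Delta_{\eta_\ell}-\E_{\mathcal S_n}\Delta_{\eta_\ell}\big)
$$
(this also shows that the left-hand side is finite).

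Next I would express the centered statistics over $\mathcal U(n)$ in terms of traces of powers. Writing the eigenvalues of $g$ as $e(\theta_i)$ with $\theta_i\in[-1/2,1/2)$, so that $\Delta_{\eta_\ell}=\sum_i\eta_\ell(n\theta_i)$ and $\Tr(g^j)=\sum_i e(j\theta_i)$, I expand $\theta\mapsto\eta_\ell(n\theta)$ in the Fourier basis $\{e(j\theta)\}_{j\in\mathbb Z}$ of $L^2[-1/2,1/2)$, getting $\Delta_{\eta_\ell}=\sum_{j\in\mathbb Z}a_j^{(\ell,n)}\Tr(g^j)$ with $a_j^{(\ell,n)}=\tfrac1n\int_{-n/2}^{n/2}\eta_\ell(x)e(-jx/n)\,dx$; since $\E_{\mathcal S_n}\Delta_{\eta_\ell}=\int_{-n/2}^{n/2}\eta_\ell=n\,a_0^{(\ell,n)}$, one has $\Delta_{\eta_\ell}-\E_{\mathcal S_n}\Delta_{\eta_\ell}=\sum_{j\neq0}a_j^{(\ell,n)}\Tr(g^j)$. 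Because $\hat\eta_\ell$ is supported in $[-\delta_\ell,\delta_\ell]$, one has $a_j^{(\ell,n)}=\tfrac1n\hat\eta_\ell(j/n)-\int_{|\theta|>1/2}\eta_\ell(n\theta)e(-j\theta)\,d\theta$, this edge integral being uniformly small and, given the smoothness of $\eta_\ell$ (automatic, as $\hat\eta_\ell$ has compact support) together with its decay, rapidly decreasing in $j$. Using the orthogonality $\int_{\mathcal U(n)}\Tr(g^j)\overline{\Tr(g^{j'})}\,dg=\delta_{jj'}\min(|j|,n)$ --- so the edge-term remainder has variance $O(n^{-3})$ --- and the fact that higher moments of linear statistics of a projection-kernel determinantal process are controlled by their variances, one may replace $\Delta_{\eta_\ell}-\E_{\mathcal S_n}\Delta_{\eta_\ell}$ by the truncation $\Delta_{\eta_\ell}^{(n)}:=\sum_{0<|j|\le\delta_\ell n}\tfrac1n\hat\eta_\ell(j/n)\Tr(g^j)$ at a cost that is $o(1)$ in every $L^k(\mathcal U(n))$, while $\|\Delta_{\eta_\ell}^{(n)}\|_{L^k(\mathcal U(n))}=O(1)$ uniformly in $n$.

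Finally, $\E_{\mathcal S_n}\prod_\ell\Delta_{\eta_\ell}^{(n)}$ is a $k$-fold sum of $\prod_\ell\tfrac1n\hat\eta_\ell(j_\ell/n)\int_{\mathcal U(n)}\prod_\ell\Tr(g^{j_\ell})\,dg$ over frequencies with $0<|j_\ell|\le\delta_\ell n$, so $|j_1|+\cdots+|j_k|\le(\delta_1+\cdots+\delta_k)n\le2n$ --- exactly the range in which Theorem~\ref{a2} gives
$$
\int_{\mathcal U(n)}\prod_{\ell=1}^k\Tr(g^{j_\ell})\,dg=\sum_{\{\mu_\lambda,\nu_\lambda\}}\prod_\lambda|j_{\mu_\lambda}|\,\delta\big(j_{\mu_\lambda}=-j_{\nu_\lambda}\big),
$$
the sum over partitions of $[k]$ into disjoint pairs (whence the whole expression vanishes for $k$ odd, matching $S_{[k]}=0$). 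Substituting, the constraint $j_{\mu_\lambda}=-j_{\nu_\lambda}$ ties each pair to one frequency $m_\lambda$, with $\sum_\lambda|m_\lambda|\le\tfrac12(\delta_1+\cdots+\delta_k)n\le n$ so the range restriction is automatic, and the sum collapses to $\sum_{\{\mu_\lambda,\nu_\lambda\}}\prod_\lambda\big(\sum_{m\neq0}\tfrac{|m|}{n^2}\hat\eta_{\mu_\lambda}(m/n)\hat\eta_{\nu_\lambda}(-m/n)\big)$; each inner factor is a Riemann sum of mesh $1/n$ converging to $\int_{\mathbb R}|x|\hat\eta_{\mu_\lambda}(x)\hat\eta_{\nu_\lambda}(-x)\,dx$ since the integrand is continuous with compact support, and summing over pair partitions gives precisely $S_{[k]}$ of Corollary~\ref{consequence} with $u_\ell=\hat\eta_\ell$. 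I expect the middle paragraph to be the real obstacle: the clean passage from $\Delta_{\eta_\ell}-\E\Delta_{\eta_\ell}$ to the truncation $\Delta_{\eta_\ell}^{(n)}$ demands control of the edge Fourier coefficients of $\eta_\ell(n\,\cdot)$ and uniform $L^k$ bounds on the truncated statistics, while at the same time the contributing frequencies must stay inside the band $|j_1|+\cdots+|j_k|\le2n$ where Theorem~\ref{a2} holds --- and it is this tension that singles out $\delta_1+\cdots+\delta_k\le2$ as the sharp hypothesis, just as in Lemma~\ref{X7}.
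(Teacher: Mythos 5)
Your proposal is correct and follows essentially the same route as the paper: reduce to $\mathcal{U}(n)$ via Proposition \ref{a3}, expand the centered linear statistics in traces of powers with coefficients $\tfrac{1}{n}\hat{\eta}_\ell(j/n)$, observe that the Fourier support condition confines the frequencies to the band $|j_1|+\cdots+|j_k|\le 2n$ where Theorem \ref{a2} applies, and pass to the limit as a Riemann sum over pair partitions. The only divergence is at the middle step you flag as the obstacle: instead of truncating the Fourier expansion of $\eta_\ell(n\,\cdot)$ on $[-1/2,1/2)$ and estimating edge coefficients and their higher moments, the paper replaces $\eta_\ell$ by its periodization $\eta_\ell^{(n)}(\theta)=\sum_{m\in\mathbb{Z}}\eta_\ell(\theta+nm)$, so that Poisson summation yields the coefficients $\tfrac{1}{n}\hat{\eta}_\ell(j/n)$ exactly and no edge terms or moment-control lemma are required.
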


Note that here, by definition, $\E \Delta_\eta = \int \eta\,dx$.

Before we come to the proof, we note that as an easy consequence, upon expanding the product in \ref{aeq1} and applying induction,

\begin{cor}
\label{a5}
A point process $\mathcal{P}$ satisfies \eqref{aeq1} for all $k$ over the range of test functions restricted as in Theorem \ref{a4} if and only if for all $k$ and for any integrable $\eta$ defined on $\mathbb{R}^k$ with $\mathrm{supp }\,\hat{eta} \subseteq \{y \in \mathbb{R}^k: |y_1| + \cdot\cdot\cdot + |y_k| \leq 2\}$,
$$
\E_\mathcal{S} \sum_{\substack{i_1,...,i_k\\\mathrm{distinct}}}\eta(\xi_{i_1},...,\xi_{i_k}) = \int_{\mathbb{R}^k} \eta(x_1,...,x_k) \det_{k \times k}[ S(x_i -x_j) ] d^k x.
$$
\end{cor}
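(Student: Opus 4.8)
The plan is to translate each of the two conditions into an equivalent statement about the \emph{admissible} correlation (factorial‑moment) functionals of $\mathcal P$, using the standard combinatorial dictionary between moments of linear statistics and correlation functions, while taking care that the constraint $\mathrm{supp}\,\hat\eta\subseteq\{\,|y_1|+\cdots+|y_k|\le2\,\}$ is respected at every step. The one observation that makes this bookkeeping work is that multiplying test functions convolves their Fourier transforms: if $\mathrm{supp}\,\hat\eta_\ell\subseteq[-\delta_\ell,\delta_\ell]$ for $\ell=1,\dots,k$, then for any set partition $P$ of $[k]$ the tensor product $\bigotimes_{B\in P}\big(\prod_{\ell\in B}\eta_\ell\big)$ has Fourier transform supported in the box $\prod_{B\in P}\big[-\sum_{\ell\in B}\delta_\ell,\ \sum_{\ell\in B}\delta_\ell\big]$, which lies inside $\{|y_1|+\cdots+|y_{|P|}|\le\sum_{\ell=1}^k\delta_\ell\}$. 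Hence when $\sum_\ell\delta_\ell\le2$ every lower‑order functional that appears below is evaluated at an admissible test function; this is precisely what keeps the constant $2$ the same in the two conditions. Write $m_\ell=\int_{\mathbb R}\eta_\ell$, and for a point process $\mathcal Q$ and integrable $\phi_1,\dots,\phi_j$ write $R^{\mathcal Q}_j(\phi_1,\dots,\phi_j)=\E_{\mathcal Q}\sum_{r_1,\dots,r_j\ \mathrm{distinct}}\prod_s\phi_s(\xi_{r_s})$; for $\mathcal Q=\mathcal S$ this equals $\int_{\mathbb R^j}\prod_s\phi_s(x_s)\det_{j\times j}[S(x_i-x_j)]\,d^jx$ by definition of $\mathcal S$.

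For \emph{any} point process the two basic identities are the cluster expansion
$$
\E_{\mathcal Q}\prod_{\ell=1}^k\Delta_{\eta_\ell}=\sum_{P\vdash[k]}R^{\mathcal Q}_{|P|}\Big(\big(\textstyle\prod_{\ell\in B}\eta_\ell\big)_{B\in P}\Big),
$$
obtained by grouping $k$‑tuples of point‑indices according to which indices coincide, and the inclusion--exclusion
$$
\E_{\mathcal Q}\prod_{\ell=1}^k(\Delta_{\eta_\ell}-m_\ell)=\sum_{A\subseteq[k]}\Big(\prod_{\ell\notin A}(-m_\ell)\Big)\E_{\mathcal Q}\prod_{\ell\in A}\Delta_{\eta_\ell}.
$$
Combining them, $\E_{\mathcal Q}\prod_\ell(\Delta_{\eta_\ell}-m_\ell)$ is \emph{one} fixed universal linear combination, depending only on $k$ with coefficients polynomial in the $m_\ell$, of the quantities $R^{\mathcal Q}_{|P|}$ at the admissible tensor‑product test functions described above. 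This already yields ``RHS $\Rightarrow$ LHS'': if $\mathcal P$ has the stated correlation functionals, then in particular $R^{\mathcal P}_1(\eta)=\int\eta$, so $m_\ell=\E_{\mathcal P}\Delta_{\eta_\ell}=\E_{\mathcal S}\Delta_{\eta_\ell}$ and every $R^{\mathcal P}_{|P|}$ occurring equals the corresponding $R^{\mathcal S}_{|P|}$; therefore $\E_{\mathcal P}\prod_\ell(\Delta_{\eta_\ell}-\E_{\mathcal P}\Delta_{\eta_\ell})=\E_{\mathcal S}\prod_\ell(\Delta_{\eta_\ell}-\E_{\mathcal S}\Delta_{\eta_\ell})=S_{[k]}$ by Theorem \ref{a4} (using that $\mathcal S$ satisfies the RHS for all integrable test functions by definition).

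For ``LHS $\Rightarrow$ RHS'' we induct on $k$ to prove $R^{\mathcal P}_k=R^{\mathcal S}_k$ on admissible tensor products, and then extend. The $k=1$ instance of \eqref{aeq1} (where the centering term $\E_{\mathcal S}\Delta_\eta$ equals $\int\eta$) reads $\E_{\mathcal P}\Delta_\eta=\int\eta=R^{\mathcal S}_1(\eta)$. Assuming $R^{\mathcal P}_j=R^{\mathcal S}_j$ on admissible test functions for all $j<k$, solve the cluster expansion for its top term $R^{\mathcal P}_k(\eta_1,\dots,\eta_k)$, rewrite $\E_{\mathcal P}\prod_\ell\Delta_{\eta_\ell}$ by inclusion--exclusion in terms of the centered moments $\E_{\mathcal P}\prod_{\ell\in A}(\Delta_{\eta_\ell}-m_\ell)$ — each equal to the pairing sum $S_A$ by the hypothesis applied to $\{\eta_\ell\}_{\ell\in A}$, which is admissible since $\sum_{\ell\in A}\delta_\ell\le2$, and using $m_\ell=\int\eta_\ell$ from the $k=1$ case — and the numbers $m_\ell$; every remaining correlation functional has order $|P|<k$ at an admissible tensor product, so the inductive hypothesis together with Theorem \ref{a4} turns the right‑hand side into precisely the universal combination that computes $R^{\mathcal S}_k(\eta_1,\dots,\eta_k)$, giving $R^{\mathcal P}_k=R^{\mathcal S}_k$ on admissible tensor products. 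It then remains to upgrade to arbitrary integrable $\eta$ on $\mathbb R^k$ with $\mathrm{supp}\,\hat\eta\subseteq\{|y_1|+\cdots+|y_k|\le2\}$: using a smooth partition of unity in the frequency variables subordinate to a finite cover of the simplex by boxes $\prod[-\delta_\ell,\delta_\ell]$ with $\sum\delta_\ell\le2$, and density of finite sums of tensor products among functions band‑limited to such a box, one writes $\eta$ as a limit of admissible test functions and passes to the limit. I expect this last extension to be the only genuinely delicate point, since it requires continuity of $R^{\mathcal P}_k$ and $R^{\mathcal S}_k$ in a topology controlling $\|\hat\eta\|_1$ together with a little decay, hence an a priori local bound on the $k$‑point correlation of $\mathcal P$ (obtainable from the hypothesis via a Corollary \ref{consequence}–type estimate applied to $\mathcal P$, or assumed as a mild regularity property of $\mathcal P$); the combinatorial inversion itself is formal once the Fourier‑support bookkeeping of the first paragraph is in place, and it is that bookkeeping which pins the threshold $2$.
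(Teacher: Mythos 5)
Your argument is correct and is exactly the route the paper intends: the paper offers no proof beyond the remark that the corollary follows ``upon expanding the product in \eqref{aeq1} and applying induction,'' and your cluster-expansion/inclusion--exclusion bookkeeping, together with the Fourier-support observation that pins the threshold $2$, is precisely that expansion and induction carried out in detail. The one step you flag as delicate --- passing from admissible tensor products to general band-limited $\eta$ on $\mathbb{R}^k$ --- is also the step the paper elides here, handling the analogous passage in the proof of Theorem \ref{a1} by uniform approximation plus the upper bound of Theorem \ref{10}, which is the same local-boundedness input you identify.
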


\begin{proof}[Proof of Theorem \ref{a4}]
For a function $\eta$, define
$$
\eta^{(n)}(\theta) = \sum_{k\in \mathbb{Z}} \eta(\theta + nk).
$$
Note that for Schwartz $\eta$, $\eta^{(n)} \rightarrow \eta$ uniformly. We have then that for fixed Schwartz $\eta_1, ..., \eta_k$,
\begin{align*}
\E_\mathcal{S}\prod_{\ell=1}^k\Big(\Delta_{\eta_\ell}-\E_\mathcal{S} \Delta_{\eta_\ell}\Big) &= \lim_{n\rightarrow\infty} \E_{\mathcal{S}_n}\prod_{\ell=1}^k \Big(\Delta_{\eta_l^{(n)}}-\E_{\mathcal{S}_n}\Delta_{\eta_l^{(n)}}\Big) \\
&= \lim_{n\rightarrow\infty} \int_{\mathcal{U}(n)} \prod_{\ell=1}^k\bigg(\sum_{\nu=1}^n \eta_\ell^{(n)}(n\theta_\nu)-n\int_{-1/2}^{1/2} \eta_\ell^{(n)}(n\theta)\,d\theta\bigg) \, dg
\end{align*}
But by Poisson summation,
$$
\eta_\ell^{(n)}(n\theta_\nu)-\int_{-1/2}^{1/2}\eta_\ell^{(n)}(n\theta)\,d\theta = \sum_{j\in \mathbb{Z}\backslash \{0\}} \frac{1}{n} \hat{\eta}_\ell\Big(\frac{j}{n}\Big) e(j\theta),
$$
so that
\begin{align*}
\int_{\mathcal{U}(n)} \prod_{\ell=1}^k\bigg(\sum_{\nu=1}^n \eta_\ell^{(n)}(n\theta_\nu)-n\int_{-1/2}^{1/2} \eta_\ell^{(n)}(n\theta)\,d\theta\bigg) \, dg
&= \int_{\mathcal{U}(n)}\prod_{\ell=1}^k \sum_{j \in \mathbb{Z}\backslash\{0\}} \frac{1}{n} \hat{\eta}_\ell\Big(\frac{j}{n}\Big)\Tr(g^{j})\,dg\\
&= \sum_{j_1,...,j_k \in \mathbb{Z}\backslash\{0\}}\prod_{\ell=1}^k\frac{1}{n} \hat{\eta}_\ell\Big(\frac{j_\ell}{n}\Big) \cdot \int_{\mathcal{U}(n)}\prod_{\ell=1}^k \Tr(g^{j_\ell})\, dg.
\end{align*}
But for $\hat{\eta}_1, ..., \hat{\eta}_k$ restricted as in the Theorem, this sum is only over those $j$ with $|\tfrac{j_1}{n}| + \cdot\cdot\cdot + |\tfrac{j_k}{n}| \leq 2$. In this case the above sum reduces to
$$
\sum\prod_\lambda\bigg(\sum_{j\in\mathbb{Z}\backslash\{0\}}\frac{1}{n}\frac{|j|}{n} \hat{\eta}_{\mu_\lambda}\Big(\frac{j}{n}\Big)\hat{\eta}_{\nu_\lambda}\Big(\frac{-j}{n}\Big)\bigg).
$$
Clearly this tends to $S_{[k]}.$
\end{proof}

\begin{proof}[Proof of Theorem \ref{a1}]
Using Corollary \ref{consequence} for $n(T)=1$, we have for $\eta_1,...,\eta_\ell$ as in Theorem $\ref{a4}$,
$$
\lim_{T\rightarrow\infty} \int_\mathbb{R}\frac{\sigma(t/T)}{T}\prod_{\ell=1}^k \int_{-\infty}^\infty \eta\Big(\tfrac{\log T}{2\pi}(\xi_\ell-t)\Big) \,dS(\xi_\ell)\,dt = S_{[k]}.
$$
But by Stirling's formula,
$$
\int_{-\infty}^\infty \eta\Big(\tfrac{\log T}{2\pi}(\xi_\ell-t)\Big)\, dS(\xi) = \sum_\gamma \eta\Big(\tfrac{\log T}{2\pi}(\gamma-t)\Big) - \int \eta(x)\, dx + o(1).
$$
Expanding the product as in Corollary \ref{a5}, and passing from $\sigma$ to $\mathbf{1}_{[1,2]}$ as before yeilds the claim for $\eta = \eta_1\otimes\cdot\cdot\cdot\otimes\eta_k$. We can pass to general $\eta$ by uniformly approximating such $\eta$ and using Theorem \ref{10} to bound the difference between the linear statistics of $\eta$ and those of its approximation.
\end{proof}

The convergence here is microscopic, and therefore cannot, unless spread over a wider region as in Corollary \ref{a4}, yield a mesoscopic central limit theorem like Fujii's or Theorem \ref{generalmeso}. In a general way, it does appear that in the mesoscopic regime, the zeroes of the zeta functions are spaced like the points of a sine-kernel determinantal point process -- and that moreover we have knowledge of this fact as long as any test functions used remain microscopically band-limited. Stating this principle in a way which is both (i) precise, and (ii) satisfying, is a rough task however. We shall make an attempt below, but we should be forthright that it is only the first of these conditions and not the second that is really achieved. Before proceeding, it is worthwhile to discuss the matter heuristically somewhat further.

We say that a point process $\mathcal{P}$ is ``mock-determinantal with sine-kernel" if its correlation functions agree with that of $\mathcal{S}$ with respect to sufficiently smooth test functions; that is
$$
\E_\mathcal{P}\sum_{\substack{i_1,...,i_k\\\mathrm{distinct}}}\eta(\xi_{i_1},...,\xi_{i_k}) = \E_\mathcal{S} \sum_{\substack{i_1,...,i_k\\\mathrm{distinct}}}\eta(\xi_{i_1},...,\xi_{i_k})
$$
with respect to -- say for our purposes -- $\eta$ with Fourier transform $\hat{\eta}$ supported on $\{x\in \mathbb{R}^k: |x_1|+\cdot\cdot\cdot+|x_k| \leq 2\}.$ Using the proof above for the zeroes of the zeta function, one can show that Theorems \ref{generalmeso} and \ref{generalmesoconv} hold for any such $\mathcal{P}$. That is for $\eta$ restricted as in either theorem, a parameter $L$ which grows, and $\Delta_\eta = \sum \eta(\xi_i/L)$,
$$
\frac{\Delta_\eta-\E\Delta_\eta}{\sqrt{\Var \Delta_\eta}} \Rightarrow N(0,1),
$$
as $L\rightarrow\infty$. (As here we are dealing with a single point process $\mathcal{P}$, `mesoscopic' restrictions on the growth of $L$ play no role.) We may ask whether there exists any such mock-determinantal point processes $\mathcal{P}$ for which $\eta$ is of bounded variation, but $(\Delta_\eta-\E\Delta_\eta)/\sqrt{\Var \Delta_\eta}$ does not tend to the normal distribution. I do not know the answer to this, but I suspect that there does. This would imply that to fill the small gap between Theorems \ref{generalmeso} and \ref{generalmesoconv} and their random matrix analogues will require (a small amount of) statistical information about the zeroes of the zeta function outside of that provided by test functions which are band-limited as in Rudnick-Sarnak.

We return to our goal of characterizing the zeroes of the zeta function in the mesoscopic regime in a way that retains microscopic statistics as well. We have:

\begin{thm}
\label{a6}
Let $\sigma$ be as in Theorem \ref{X}, and let $Z_T$ be the point process defined by the points $\{\tfrac{\log T}{2\pi}(\gamma-t)\}$ where $\gamma$ runs through the ordinates of zeroes of the zeta function, and $t$ is a random variable in $\mathbb{R}$ with distribution given by $\sigma(t/T)/T$. For fixed $A < 2$, fixed $r$ of compact Fourier support, and fixed $n(T)$ with $n(T)\rightarrow\infty$ but with $n(T) = o(\log T)$, we have that for $|\alpha_1| + \cdot\cdot\cdot + |\alpha_k| \leq A$,
$$
\E_{Z_T}\prod_{\ell=1}^k\Big(\Delta_\ell-\E \Delta_\ell\Big) = \E_\mathcal{S}\prod_{\ell=1}^k\Big(\Delta_\ell-\E \Delta_\ell\Big) + \sum_{\emptyset\subseteq J \subsetneq [k]} \varepsilon([k]\backslash J)\cdot\E_\mathcal{S}\prod_{\ell\in J}\Big(\Delta_\ell-\E \Delta_\ell\Big),
$$
where
$$
\Delta_\ell = \sum r\Big(\frac{\xi_i}{n(T)}\Big)e(\alpha\xi_i)
$$
for the terms $\varepsilon([k]\backslash J)$ having no dependence on $\alpha_i$ with $i\in J$, and tending to $0$ uniformly as $T\rightarrow\infty$.
\end{thm}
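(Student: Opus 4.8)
Write $\Phi_\ell(u):=r(u/n(T))\,e(\alpha_\ell u)$, so that $\Delta_\ell=\sum_\gamma\Phi_\ell\big(\tfrac{\log T}{2\pi}(\gamma-t)\big)$ and, by $\tilde d(\xi)\,d\xi=dS(\xi)+\tfrac{\Omega(\xi)}{2\pi}\,d\xi$,
\[
\Delta_\ell=\tilde\Delta_\ell(t)+D_\ell(t),\qquad \tilde\Delta_\ell(t):=\int\Phi_\ell\big(\tfrac{\log T}{2\pi}(\xi-t)\big)\,dS(\xi),
\]
with $D_\ell$ deterministic. Since $\mathrm{supp}\,\hat r\subset[-\rho,\rho]$ one has $\mathrm{supp}\,\hat\Phi_\ell\subset[\alpha_\ell-\rho/n(T),\alpha_\ell+\rho/n(T)]$, so the total ``Fourier budget'' is $\le |\alpha_1|+\dots+|\alpha_k|+k\rho/n(T)\le A+k\rho/n(T)<2$ for $T$ large: this is the only point at which $A<2$ enters the principal term. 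Lemma~\ref{X7} with $k=1$ gives $\E_{Z_T}\tilde\Delta_\ell=O(T^{-c})$, and Stirling's formula gives $D_\ell(t)=\tfrac{\Omega(t)}{\log T}\,\hat\Phi_\ell(0)+O(n(T)^2/T)$; as $\hat\Phi_\ell(0)=n(T)\hat r(-n(T)\alpha_\ell)=0$ for large $T$ when $\alpha_\ell\ne0$, we conclude $D_\ell(t)-\E_{Z_T}D_\ell(t)=O(T^{-c})$ if $\alpha_\ell\ne0$, and $=\tfrac{\hat r(0)n(T)}{\log T}\,\vartheta(t/T)+O(T^{-c})$ if $\alpha_\ell=0$, where $\vartheta$ is a fixed function, bounded on compacts, growing like $\log$ at infinity, with $\int\sigma\,|\vartheta|^{k}<\infty$ (because $\sigma\log^k$ is integrable). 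Thus $\Delta_\ell-\E_{Z_T}\Delta_\ell=\tilde\Delta_\ell+\beta_\ell(t/T)+O(T^{-c})$ with $\beta_\ell$ deterministic and $\beta_\ell\equiv0$ when $\alpha_\ell\ne0$.

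\textbf{Step 2 (the principal moment).} Expand $\prod_\ell(\tilde\Delta_\ell+\beta_\ell+O(T^{-c}))$; the $O(T^{-c})$ pieces are harmless because $|\tilde\Delta_\ell(t)|\lesssim_r\log(|t|+2)$ pointwise. The leading term is $\E_{Z_T}\prod_\ell\tilde\Delta_\ell=(-1)^k\int\tfrac{\sigma(t/T)}{T}\prod_\ell\big(-\!\!\int\Phi_\ell\,dS\big)\,dt$, to which Lemma~\ref{X7} applies by Step~1. Writing $\Phi_\ell(u)=\eta_\ell(u/n(T))$ with $\eta_\ell(v):=r(v)e(\alpha_\ell n(T)v)$, so that $\hat\eta_\ell=\hat r(\,\cdot-\alpha_\ell n(T))$ is a \emph{translate} of $\hat r$, the arithmetic sum produced by Lemma~\ref{X7} is evaluated by Lemma~\ref{eval} with $H=\log T/n(T)$ and $u_\ell=\hat\eta_\ell$; the crucial gain is that $\|\hat\eta_\ell\|_\infty,\|\hat\eta_\ell'\|_\infty,\|\hat\eta_\ell''\|_\infty$ equal $\|\hat r\|_\infty,\|\hat r'\|_\infty,\|\hat r''\|_\infty$, bounded uniformly in $T$, so that
\[
\E_{Z_T}\prod_{\ell}\tilde\Delta_\ell=S_{[k]}+\sum_{\emptyset\subseteq J\subsetneq[k]}S_J\cdot O_k\!\big((n(T)/\log T)^{k-|J|}\big)+O(T^{-c}).
\]
The substitution $z=n(T)(x-\alpha_i)$ shows $\int|x|\hat\Phi_i(x)\hat\Phi_j(-x)\,dx=\int|z|\,\hat\eta_i(z)\hat\eta_j(-z)\,dz$, so $S_J$ is the same whether built from $\hat\eta_\ell$ or from $\hat\Phi_\ell$; since the $\hat\Phi_\ell$ meet the hypotheses of Theorem~\ref{a4} (again using the $<2$ budget), $S_J=\E_{\mathcal S}\prod_{\ell\in J}(\Delta_\ell-\E_{\mathcal S}\Delta_\ell)$. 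This identifies the above with the first two types of term on the right-hand side of the statement, the $\varepsilon([k]\setminus J)$ here being $O_k((n(T)/\log T)^{k-|J|})$, in particular independent of every $\alpha_i$.

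\textbf{Step 3 (the remaining terms).} Each term of the expansion using some $\beta_\ell$ has the shape $\big(\tfrac{\hat r(0)n(T)}{\log T}\big)^{m}\int\tfrac{\sigma(t/T)\vartheta(t/T)^{m}}{T}\prod_{\ell\in J}\tilde\Delta_\ell\,dt$ with $m=k-|J|\ge1$ and $\alpha_\ell=0$ for $\ell\notin J$. The weight $w:=\sigma\vartheta^{m}$ is integrable but \emph{not} band-limited, only $\hat w(\xi)=O\big((\log(|\xi|+2))^{m-1}/(|\xi|+1)\big)$. I would rerun the proof of Lemma~\ref{X7} with this weight: the diagonal part ($n_1^{\epsilon_1}\cdots n_{|J|}^{\epsilon_{|J|}}=1$) contributes $\hat w(0)=\int\sigma\vartheta^{m}$ times the same arithmetic sum as before, hence by Lemma~\ref{eval} the diagonal gives $(\int\sigma\vartheta^{m})\big(S_J+\sum_{J'\subsetneq J}S_{J'}O((n(T)/\log T)^{|J|-|J'|})\big)$; the off-diagonal terms, no longer exactly zero, are bounded—using the $1/|\xi|$ decay of $\hat w$, the narrow support of each $\hat\eta_\ell$ near $T^{|\alpha_\ell|}$, the $\Lambda(n_\ell)/\sqrt{n_\ell}$ weights, and, for near-cancellation configurations, standard upper bounds for shifted convolutions of $\Lambda$—by $O\big(T^{(\sum_{\ell\in J}|\alpha_\ell|)/2-1}(\log T)^{O(1)}\big)=o(1)$, where again $\sum_{\ell\in J}|\alpha_\ell|\le A<2$ is exactly what is needed. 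Multiplying by the prefactor $(n(T)/\log T)^{m}=o(1)$ turns this term into $\sum_{J'\subseteq J}\varepsilon([k]\setminus J')\,\E_{\mathcal S}\prod_{\ell\in J'}(\Delta_\ell-\E\Delta_\ell)+o(1)$ with $\alpha$-independent $\varepsilon$'s tending to $0$. Summing over all $J$ and collecting the $\varepsilon$-coefficients finishes the proof. (For the pointwise bound $|\tilde\Delta_\ell(t)|\lesssim_r\log(|t|+2)$ and for any coarser estimates needed in the bookkeeping one may also invoke the upper bound Theorem~\ref{10}.)

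\textbf{Expected main obstacle.} Steps~1 and~2 are a direct, essentially mechanical, transcription of Sections 3--5. The real work is Step~3: because the smooth part of the zero-counting function manufactures weights $\sigma\vartheta^{m}$ that are not band-limited, Lemma~\ref{X7} cannot be quoted verbatim, and one must hand-estimate the off-diagonal arithmetic contributions; this is the only place where $|\alpha_1|+\dots+|\alpha_k|<2$ is used a second time (to beat the slow $1/|\xi|$ decay of $\hat w$), and it is the step most likely to require a genuinely new, if routine, estimate on correlations of the von Mangoldt function.
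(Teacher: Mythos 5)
Your Steps 1 and 2 are precisely what the paper intends: its entire proof of Theorem \ref{a6} is the single sentence ``this may be proven by following exactly the proof of Corollary \ref{consequence},'' and your execution --- putting $\Phi_\ell(u)=r(u/n(T))e(\alpha_\ell u)$ through Lemma \ref{X7} and Lemma \ref{eval}, noting that $\mathrm{supp}\,\hat\Phi_\ell\subset[\alpha_\ell-\rho/n(T),\alpha_\ell+\rho/n(T)]$ keeps the total Fourier budget below $2$, that the norms $\|\hat r(\cdot-n(T)\alpha_\ell)\|_\infty$ etc.\ are translation-invariant and hence give $\alpha$-independent $\varepsilon$'s, and that the pairing sums $S_J$ match $\E_{\mathcal S}\prod_{\ell\in J}(\Delta_\ell-\E\Delta_\ell)$ via Theorem \ref{a4} and the rescaling identity --- is exactly that argument, correctly carried out.

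Your Step 3 goes beyond the paper: the author silently identifies $\Delta_\ell-\E_{Z_T}\Delta_\ell$ with $\int\Phi_\ell\,dS$, i.e.\ centers by the smooth density $\int\Phi_\ell(\tfrac{\log T}{2\pi}(\xi-t))\tfrac{\Omega(\xi)}{2\pi}\,d\xi$ rather than by the $t$-average, and you are right that for general $n(T)$ the discrepancy $D_\ell(t)-\E D_\ell\asymp \tfrac{n(T)\hat r(-n(T)\alpha_\ell)}{\log T}\vartheta(t/T)$ does not vanish and produces cross terms weighted by the non-band-limited $\sigma\vartheta^m$. Two remarks on your treatment of these. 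First, your diagnosis that this ``is likely to require a genuinely new estimate on correlations of the von Mangoldt function'' overstates the difficulty: after grouping the off-diagonal arithmetic sum by $N_1=\prod_{\epsilon_\ell=+1}n_\ell$ and $N_2=\prod_{\epsilon_\ell=-1}n_\ell$, the bound $|\hat w(\tfrac{T}{2\pi}\log(N_1/N_2))|\lesssim \max(N_1,N_2)/(T|N_1-N_2|)$ together with the observation that the near-diagonal range $N_2\le N_1<2N_2$ forces both $N_i\le 2T^{\min(\Delta_+,\Delta_-)}\le 2T^{\Delta/2}$ reduces everything to $\sum_{N\le X}\sum_{h\ge1}h^{-1}d_j(N)d_j(N+h)\lesssim X\log^{O(1)}X$, which follows from $d_j(N)d_j(N+h)\le d_j(N)^2+d_j(N+h)^2$; no shifted-convolution input for $\Lambda$ is needed, and your claimed bound $O(T^{\Delta_J/2-1}\log^{O(1)}T)$ does come out. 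Second, since any quantity that is $o(1)$ uniformly in the $\alpha$'s can be absorbed into the $J=\emptyset$ term $\varepsilon([k])$, you need only this upper bound and not the diagonal asymptotic you also compute. So the proposal is correct; it coincides with the paper's argument where the paper has one, and supplies (in sketched but repairable form) the one piece of bookkeeping --- the mean-versus-density centering --- that the paper's one-line proof elides.
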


This may be proven by following exactly the proof of Corollary \ref{consequence}. By slightly modifying the proof, one may prove this theorem even for $\sigma = \mathbf{1}_{[1,2]}$ so that $t$ is uniformly distributed between $T$ and $2T$, but we do not pursue this matter here. By integrating in $\alpha$, one can obtain microscopic and macroscopic statistics, and correlations thereof, uniformly for points separated by a distance asymptotically less than $m(T)$. One can, for instance, recover Corollary \ref{consequence} for $n(T) = o(m(T))$ in this way. We are able to integrate in $\alpha$ without destroying error terms for the reason that $\varepsilon([k]\backslash J)$ has no dependence on $\alpha_i$ for $i\in J$.

In the same way, by modifying the proof of Theorem \ref{a4},

\begin{thm}
\label{a7}
For fixed $A < 2$, fixed $r$ of compact Fourier support, and fixed $n(N)$ with $n(N)\rightarrow\infty$ but with $n(N) = o(N)$, we have that for $|\alpha_1| + \cdot\cdot\cdot + |\alpha_k| \leq A$,
$$
\E_{\mathcal{S}_N}\prod_{\ell=1}^k\Big(\Delta_\ell-\E \Delta_\ell\Big) = \E_\mathcal{S}\prod_{\ell=1}^k\Big(\Delta_\ell-\E \Delta_\ell\Big) + \sum_{\emptyset\subseteq J \subsetneq [k]} \varepsilon([k]\backslash J)\cdot\E_\mathcal{S}\prod_{\ell\in J}\Big(\Delta_\ell-\E \Delta_\ell\Big),
$$
for $\Delta_\ell$ (defined in the obvious way with respect to $n(N)$), and $\varepsilon$ as in Theorem \ref{a6}.
\end{thm}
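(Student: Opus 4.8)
The plan is to transpose the proof of Theorem \ref{a4} to the finite unitary group, using the exact Diaconis--Shahshahani identity (Theorem \ref{a2}) in place of the fundamental theorem of arithmetic, and organizing the resulting approximation errors just as the error terms are organized in Lemma \ref{eval} and Corollary \ref{consequence}. Write $\phi_\ell(\xi) = r(\xi/n(N))\,e(\alpha_\ell\xi)$. On $\mathcal{U}(N)$, with eigenangles $\theta_\nu\in[-1/2,1/2)$ and $\mathcal{S}_N$ the rescaled points $\{N\theta_\nu\}$ (Proposition \ref{a3}), we have $\Delta_\ell = \sum_\nu\phi_\ell(N\theta_\nu)$. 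Expanding $\theta\mapsto\phi_\ell(N\theta)$ in a Fourier series on $[-1/2,1/2)$ gives, exactly, $\Delta_\ell = \sum_{m\in\mathbb{Z}} c^{(\ell)}_m\Tr(g^m)$, and since $\E_{\mathcal{S}_N}\Tr(g^m)=0$ for $m\neq0$,
$$
\Delta_\ell - \E_{\mathcal{S}_N}\Delta_\ell = \sum_{m\neq0}c^{(\ell)}_m\Tr(g^m),\qquad c^{(\ell)}_m = \frac1N\int_{-N/2}^{N/2}\phi_\ell(y)\,e(-my/N)\,dy,
$$
precisely as the identity $\Delta_{\eta^{(n)}_\ell}-\E\Delta_{\eta^{(n)}_\ell}=\sum_{m\neq0}\tfrac1n\hat\eta_\ell(m/n)\Tr(g^m)$ in the proof of Theorem \ref{a4}. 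Since $\hat r$ has compact support, say in $[-\rho,\rho]$, and (as we may take it) is smooth, one has $c^{(\ell)}_m = \tfrac1N\widehat{\phi_\ell}(m/N) + r^{(\ell)}_m$, where $\widehat{\phi_\ell}(x) = n(N)\,\hat r\big(n(N)(x-\alpha_\ell)\big)$ is supported in $\{m:\,|m/N-\alpha_\ell|\le\rho/n(N)\}$ and $r^{(\ell)}_m$ is a tail term bounded by $\tfrac1N\int_{|y|>N/2}|\phi_\ell(y)|\,dy$, which is negligible by the decay of $r$.

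Multiplying out over $\ell$,
$$
\E_{\mathcal{S}_N}\prod_{\ell=1}^k(\Delta_\ell-\E\Delta_\ell) = \sum_{m_1,\dots,m_k\neq0}\Big(\prod_\ell c^{(\ell)}_{m_\ell}\Big)\int_{\mathcal{U}(N)}\prod_\ell\Tr(g^{m_\ell})\,dg .
$$
Here is the one place $A<2$ enters: the summand is non-negligible only when each $m_\ell$ lies within $O(N/n(N))$ of $N\alpha_\ell$, whence $\sum_\ell|m_\ell|\le N\sum_\ell|\alpha_\ell|+O(N/n(N)) = (A+o(1))N < 2N$ for all large $N$ --- exactly the range in which Theorem \ref{a2} gives the \emph{exact} evaluation
$$
\int_{\mathcal{U}(N)}\prod_{\ell=1}^k\Tr(g^{m_\ell})\,dg = \sum_{\pi}\prod_{\{i,j\}\in\pi}|m_i|\,\delta(m_i=-m_j),
$$
the sum over perfect matchings $\pi$ of $[k]$ (empty, so that the whole expectation vanishes, when $k$ is odd, consistently with the right side of the theorem). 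Unlike the zeta case, no further sieving is needed: Diaconis--Shahshahani delivers the pairing structure directly and exactly. Substituting and factoring over $\pi$,
$$
\E_{\mathcal{S}_N}\prod_{\ell=1}^k(\Delta_\ell-\E\Delta_\ell) = \sum_\pi\prod_{\{i,j\}\in\pi}\Big(\sum_{m\neq0}|m|\,c^{(i)}_m c^{(j)}_{-m}\Big).
$$

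Now insert $c^{(\cdot)}_m = \tfrac1N\widehat{\phi_\cdot}(m/N)+r^{(\cdot)}_m$. The leading part of each block is $\tfrac1N\sum_{m\neq0}\tfrac{|m|}{N}\widehat{\phi_i}(m/N)\widehat{\phi_j}(-m/N)$, a lattice sum of mesh $1/N$ for $I_{ij}:=\int_\mathbb{R}|x|\,\widehat{\phi_i}(x)\widehat{\phi_j}(-x)\,dx$; since $N/n(N)\to\infty$, this mesh lies well below the scale $1/n(N)$ on which the integrand varies, and an Euler--Maclaurin (equivalently, Poisson) estimate --- the only subtlety being the kink of $|x|$ at the origin --- shows it differs from $I_{ij}$ by $O\big((n(N)/N)^2\big)=o(1)$, uniformly over the simplex $\sum_\ell|\alpha_\ell|\le A$. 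Together with the tail pieces $r^{(\cdot)}_m$, this yields $\sum_{m\neq0}|m|c^{(i)}_m c^{(j)}_{-m} = I_{ij}+\epsilon_{ij}$ with $\epsilon_{ij}\to0$ uniformly and depending only on $\alpha_i,\alpha_j$. Expanding $\prod_{\{i,j\}\in\pi}(I_{ij}+\epsilon_{ij})$ over sub-matchings $\pi'\subseteq\pi$ and summing over $\pi$: the $\epsilon$-free contribution reassembles, by Theorem \ref{a4} applied to the rapidly decaying band-limited $\phi_\ell$ (whose relevant frequencies satisfy $|x|<1$ precisely because $A<2$), to $\sum_\pi\prod I_{ij}=\E_\mathcal{S}\prod_\ell(\Delta_\ell-\E\Delta_\ell)$; a contribution using a nonempty $\pi'$ with vertex set $[k]\setminus J$ factors as $\big(\sum_{\pi'}\prod_{\{i,j\}\in\pi'}\epsilon_{ij}\big)\big(\sum_{\text{matchings of }J}\prod I_{ij}\big) = \varepsilon([k]\setminus J)\cdot\E_\mathcal{S}\prod_{\ell\in J}(\Delta_\ell-\E\Delta_\ell)$, where $\varepsilon([k]\setminus J)$ is assembled only from the $\alpha_i$ with $i\notin J$, tends to $0$ uniformly, and is an empty sum (hence $0$) when $|[k]\setminus J|$ is odd. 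This is exactly the asserted identity.

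The main obstacle --- just as in the passage from Lemma \ref{eval} to Corollary \ref{consequence} --- is the uniform control of the defects $\epsilon_{ij}$ over the whole simplex $|\alpha_1|+\cdots+|\alpha_k|\le A$: one must verify that both the weighted contribution of the tail pieces $r^{(\ell)}_m$ to each block sum and the lattice-sum-versus-integral discrepancy are $o(1)$ uniformly in the $\alpha$'s. This is where one uses the smoothness of $\hat r$ (to obtain rapid decay of $r$, so that the truncation tails vanish and the integrands $|x|\widehat{\phi_i}(x)\widehat{\phi_j}(-x)$ stay of bounded total variation uniformly in $\alpha$) and the restriction $n(N)=o(N)$ (so that the contributing frequency window, of width $\asymp N/n(N)$ in integer units, is wide and the $1/N$-mesh sum genuinely approximates the integral). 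As in the zeta case, what makes this manageable is that we need only that the defects factor through the matching and tend to $0$, not any asymptotic expansion of them; and the sole role of the cutoff $A<2$ --- both in the Diaconis--Shahshahani step, via $\sum_\ell|m_\ell|<2N$, and in identifying the limit, via $|x|<1$ on the relevant frequency support --- is then completely elementary.
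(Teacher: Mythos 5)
Your proposal is exactly the route the paper has in mind: Theorem \ref{a7} is stated with no proof beyond the remark that one ``modifies the proof of Theorem \ref{a4},'' and what you have written is a correct fleshing-out of that modification --- Fourier expansion of $\Delta_\ell$ in traces of powers, the exact Diaconis--Shahshahani evaluation (Theorem \ref{a2}) in the range $\sum_\ell|m_\ell|\leq 2N$ guaranteed by $A<2$ and the compact support of $\hat r$, and an organization of the block defects $\epsilon_{ij}$ into the factored error terms $\varepsilon([k]\backslash J)$ exactly as in Lemma \ref{eval} and Corollary \ref{consequence}. Two points deserve more care than you give them. First, your claim that the lattice-sum discrepancy is $O\big((n(N)/N)^2\big)$ is correct but only via the Poisson-summation form of the argument you allude to: one must use that $\widehat{|x|\,\widehat{\phi_i}(\cdot)\widehat{\phi_j}(-\cdot)}$ is the convolution of $\mathrm{fp}(-1/2\pi^2 y^2)$ with a function of total mass $O(n^2)$ concentrated at scale $n\ll N$. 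A naive Euler--Maclaurin bound by $\|G''\|_{L^1}/N^2$ gives only $O(n^3/N^2)$, which is \emph{not} $o(1)$ under the sole hypothesis $n(N)=o(N)$, and would wreck the required uniform decay of $\varepsilon([k]\backslash J)$; so the Poisson route is not optional here.

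Second, and this is the one step that does not close as written: you expand in the Fourier coefficients $c^{(\ell)}_m$ of the \emph{restriction} of $\phi_\ell$ to $[-N/2,N/2)$ and then assert that only $m_\ell$ near $N\alpha_\ell$ contribute. The tail pieces $r^{(\ell)}_m$ are supported on \emph{all} $m$, and your uniform bound $|r^{(\ell)}_m|\leq\tfrac1N\int_{|y|>N/2}|\phi_\ell|$ does not decay in $m$; meanwhile, for $\sum_\ell|m_\ell|>2N$ Theorem \ref{a2} gives no pairing formula and the trace moments can only be bounded crudely (e.g.\ by $N^k$, or by $\prod_\ell|m_\ell|^{1/2}$ using the unrestricted moment inequality quoted after Theorem \ref{a2}). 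Summing a uniform-in-$m$ bound against such weights over all of $\mathbb{Z}^k$ diverges. The clean repair is the one the proof of Theorem \ref{a4} itself uses: replace $\phi_\ell$ by its $N$-periodization $\phi_\ell^{(N)}(y)=\sum_k\phi_\ell(y+Nk)$, for which Poisson summation makes the coefficients \emph{exactly} $\tfrac1N\widehat{\phi_\ell}(m/N)$, hence exactly compactly supported in $m$, so that Diaconis--Shahshahani applies to every surviving term; then bound $\Delta_\ell-\Delta_{\phi_\ell^{(N)}}$ pointwise by $N\sup_{|y|\le N/2}\sum_{k\neq0}|r((y+Nk)/n)|\lesssim_M N(n/N)^M$ using the rapid decay of $r$, and remove it from the $k$-fold product by H\"older together with an a priori moment bound. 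With that substitution your argument is complete.
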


To have a more eloquent expression of the mesoscopic convergence expressed by these results would certainly be desirable.

We want finally to point out again that Selberg's approximation to $S(t)$, mentioned in the introduction, and therefore Fujii's Theorem's \ref{Fujiimeso} and \ref{Fujiimacro}, are true unconditionally. The first of these claims was shown by Selberg, using a zero-density estimate to bound the number of zeroes lying off the critical line. I have been unable to extend this method to prove Theorem \ref{generalmeso} unconditionally, where the points we are counting are the imaginary ordinates of non-trivial zeroes -- zeroes which may in some instances lie off the critical line -- and I leave it as a challenge for readers to do so.

\end{document}